\title{\bf Non-classification of Cartan subalgebras for a class of von Neumann algebras}
\author{\textsc{Pieter Spaas\thanks{The author was partially supported by NSF Career Grant DMS \#1253402}}}
\date{}
\newcommand{\Addresses}{{
		\bigskip
		\footnotesize
		
		\textsc{Department of Mathematics; University of California San Diego, CA 92093 (United States).}\par\nopagebreak
		\textit{E-mail address:} \texttt{pspaas@ucsd.edu}

}}
\declaretheorem[style=plain,numberwithin=section]{theorem}
\declaretheorem[style=plain,sibling=theorem]{lemma}
\declaretheorem[style=plain,sibling=theorem]{proposition}
\declaretheorem[style=plain,sibling=theorem]{corollary}
\declaretheorem[style=plain,title=Theorem]{thm}
\declaretheorem[style=plain,title=Corollary,sibling=thm]{cor}
\declaretheorem[style=definition,qed=$\blacktriangleleft$,sibling=theorem]{definition}
\declaretheorem[style=definition,sibling=theorem]{example}
\declaretheorem[style=remark,numbered=no]{remark}
\declaretheorem[style=definition,numbered=no]{terminology}
\declaretheorem[style=definition,numbered=no]{convention}
\declaretheorem[style=remark]{Claim}
\declaretheorem[style=remark,numbered=no]{claim}
\newenvironment{proofofclaim}{\textit{Proof of the claim.}}{\hfill$\diamond$}
\newcommand{\cA}{\mathcal{A}}
\newcommand{\bB}{\mathbb{B}}\newcommand{\cB}{\mathcal{B}}
 \newcommand{\C}{\mathbb{C}}\newcommand{\cC}{\mathcal{C}}
\newcommand{\bF}{\mathbb{F}}
\newcommand{\cH}{\mathcal{H}}
\newcommand{\cM}{\mathcal{M}}
 \newcommand{\N}{\mathbb{N}}\newcommand{\cN}{\mathcal{N}}
\newcommand{\cO}{\mathcal{O}}
 \newcommand{\Q}{\mathbb{Q}}
 \newcommand{\R}{\mathbb{R}}\newcommand{\cR}{\mathcal{R}}
\newcommand{\cU}{\mathcal{U}}
 \newcommand{\Z}{\mathbb{Z}}\newcommand{\cZ}{\mathcal{Z}}
\newcommand{\een}{\mathbbm{1}}
\newcommand{\al}{\alpha}
\newcommand{\be}{\beta}
\newcommand{\gam}{\gamma}
\newcommand{\Gam}{\Gamma}
\newcommand{\del}{\delta}
\newcommand{\Del}{\Delta}
\newcommand{\eps}{\varepsilon}
\newcommand{\lam}{\lambda}
\newcommand{\Lam}{\Lambda}
\newcommand{\vphi}{\varphi}
\newcommand{\norm}[1]{\left\|#1\right\|}
\newcommand{\abs}[1]{\left|#1\right|}
\newcommand{\ip}[2]{\langle #1,#2 \rangle}
\newcommand{\sub}{\subseteq}
\newcommand{\super}{\supseteq}
\newcommand{\ot}{\otimes}
\newcommand{\otb}{\bar{\otimes}}
\newcommand{\rarrow}{\rightarrow}
\newcommand{\inv}{^{-1}}
\renewcommand{\iff}{\Leftrightarrow}
\DeclareSymbolFont{Symbols}{OMS}{cmsy}{m}{n}
\DeclareMathSymbol{\Emptyset}{\mathord}{Symbols}{"3B}
\newcommand{\act}{\curvearrowright}
\newcommand{\cross}{\rtimes}
\newcommand{\dint}{\int^\oplus}
\newcommand{\emb}{\prec}
\newcommand{\II}{II$_1$ }
\newcommand{\Aut}{\operatorname{Aut}}
\newcommand{\Hom}{\operatorname{Hom}}
\newcommand{\Map}{\operatorname{Map}}
\newcommand{\ClSub}{\operatorname{ClSub}}
\newcommand{\Ad}{\operatorname{Ad}}
\newcommand{\id}{\operatorname{id}}
\newcommand{\ctr}{\operatorname{ctr}}
\newcommand{\vNa}{\operatorname{vNa}}
\newcommand{\diag}{\operatorname{diag}}
\newcommand{\Cartan}{\operatorname{Cartan}}
\newcommand{\LinfN}{L^\infty(X)\otb N}
\newcommand{\Linf}{L^\infty(X)}
\numberwithin{equation}{section}
\newcommand{\emphind}[1]{\emph{#1}\index{#1}}
\begin{document}

\maketitle

\begin{quote}
	\textsc{Abstract.} We study the complexity of the classification problem for Cartan subalgebras in von Neumann algebras. We construct a large family of II$_1$ factors whose Cartan subalgebras up to unitary conjugacy are not classifiable by countable structures, providing the first such examples. Additionally, we construct examples of II$_1$ factors whose Cartan subalgebras up to conjugacy by an automorphism are not classifiable by countable structures. Finally, we show directly that the Cartan subalgebras of the hyperfinite II$_1$ factor up to unitary conjugacy are not classifiable by countable structures, and deduce that the same holds for any McDuff II$_1$ factor with at least one Cartan subalgebra.
\end{quote}

\section*{Introduction}

Since the first paper of Murray and von Neumann \cite{MvN36} the study of von Neumann algebras has been closely related to the study of group actions on measure spaces. The \textit{group measure space construction} introduced in their paper associates to every free ergodic probability measure preserving (pmp) action $\Gam\act (X,\mu)$ of a countable group $\Gam$ on a probability measure space $(X,\mu)$ a crossed product von Neumann algebra $\Linf\cross\Gam$, which turns out to be a factor of type II$_1$. The classification of these group measure space von Neumann algebras is in general a very hard problem. Nevertheless a lot of progress has been made on several fronts. 

\vspace{5pt}

One recurring theme in all known classification results is the special role played by the so-called \text{Cartan subalgebras} - maximal abelian subalgebras whose normalizer generates the \II factor. Given a free ergodic pmp action $\Gam\act X$, the group measure space \II factor $\Linf\cross\Gam$ always contains $\Linf$ as a Cartan subalgebra. Moreover, Singer established in \cite{Si55} that two free ergodic pmp actions $\Gam\act X$ and $\Lam\act Y$ are \textit{orbit equivalent} if and only if there exists an isomorphism $\theta:L^\infty(X)\cross\Gam\rarrow L^\infty(Y)\cross\Lam$ such that $\theta(L^\infty(X))=L^\infty(Y)$. This result both gives an immediate link with measured group theory and highlights the importance of understanding the Cartan subalgebras of a given \II factor. For instance, if one can show that certain classes of group measure space \II factors have a unique Cartan subalgebra (up to conjugacy by an automorphism), their classification up to isomorphism reduces to the classification of the corresponding actions up to orbit equivalence. 

\vspace{5pt}

If the acting group is amenable the group measure space construction will always give rise to the hyperfinite \II factor $R$ by Connes' famous theorem \cite{Co76}. Moreover it was shown in \cite{CFW81} that all Cartan subalgebras of $R$ are conjugate by an automorphism of $R$. Hence all free ergodic pmp actions of all amenable groups are orbit equivalent and besides the group being amenable, one cannot recover any more information about the group nor the action from the group measure space \II factor. 

\vspace{5pt}

On the other hand, for non-amenable groups there is a wide spectrum of different \II factors appearing and rigidity results start showing up. During the last 15 years, Popa's \textit{deformation/rigidity} theory has led to considerable progress in the classification of group measure space \II factors, see for instance the surveys \cite{Po07,Va10,Io12b}. In particular, several uniqueness results for Cartan subalgebras have been established. The first such result was obtained by Ozawa and Popa in \cite{OP07}. They proved that $\Linf$ is the unique Cartan subalgebra up to unitary conjugacy inside $\Linf\cross\bF_n$ where $\bF_n$ is a non-abelian free group and $\bF_n\act X$ is a free ergodic \textit{profinite} pmp action. The class of groups whose profinite actions give \II factors with a unique Cartan subalgebra was subsequently extended in \cite{OP08,CS11}. Later the condition of profiniteness was removed by Popa and Vaes, who showed in \cite{PV11} that \textit{any} free ergodic pmp action of a non-abelian free group gives rise to a \II factor with a unique Cartan subalgebra up to unitary conjugacy. Hereafter, additional uniqueness results were obtained in (among others) \cite{PV12,Io12a,CIK13}.

\vspace{5pt}

Of course not every \II factor has a unique Cartan subalgebra. The first \II factor $M$ containing at least two Cartan subalgebras that are not conjugate by an automorphism of $M$ was constructed in \cite{CJ82}. By now there are more examples known (see for instance \cite{OP08,PV09}). Nevertheless it is worth mentioning that at the moment, as soon as uniqueness fails, we do not have any examples of \II factors for which we can describe all Cartan subalgebras up to unitary conjugacy in a satisfactory way. Some progress in this direction was made by Krogager and Vaes in \cite{KV15}, where they construct \II factors for which they can describe all \textit{group measure space Cartan subalgebras}, i.e. Cartan subalgebras $A$ arising from a decomposition of the \II factor $M$ as a crossed product $M\cong A\cross\Gam$ for some countable group $\Gam$. In particular they construct a \II factor with exactly two group measure space Cartan subalgebras up to unitary conjugacy.

\vspace{5pt}

Another result in the non-uniqueness direction was obtained by Speelman and Vaes in \cite[Theorem~2]{SV11}. They construct a class of \II factors for which the relations of unitary conjugacy and conjugacy by a (stable) automorphism on Cartan subalgebras are not smooth (or not concretely classifiable, see also Definition~\ref{def:smooth}).

\vspace{5pt}

In this paper we continue in this direction by exploring further the complexity, in the sense of descriptive set theory, of the classification problem for Cartan subalgebras. We will consider both the equivalence relations of unitary conjugacy and of conjugation by an automorphism on the space of Cartan subalgebras of a family of \II factors. Using different techniques, we will provide the first examples of \II factors whose Cartan subalgebras are not classifiable by countable structures for either notion of equivalence. Hereby we confirm the statement in \cite[Remark~14]{SV11} expressing the belief that such \II factors should exist. We refer to section~\ref{sec:classctbstruct} for the definition of classifiability by countable structures. Intuitively this means we cannot construct complete invariants out of countable (or discrete) structures such as countable groups, graphs, etc. and that these equivalence relations are \textquotedblleft beyond $S_\infty$-actions".

\subsection*{Statement of the main results}

Let $\Gam$ be a relatively strongly solid group (see Definition~\ref{def:rss}, this class includes all non-abelian free groups and more generally all non-elementary hyperbolic groups by \cite{PV11,PV12}). Let further $(X,\mu)$ be a standard probability space, $\Gam\act X$ a free ergodic pmp action and $N$ an arbitrary \II factor. Consider
\[
\cM:=(\Linf\cross\Gam)\otb N.
\]
It turns out that the structure of $\Cartan(\cM)$, the space of Cartan subalgebras of $\cM$, is completely determined by the structure of $\Cartan(N)$ and the orbits of the action $\Gam\act X$. More precisely, we can write $\cM=(\LinfN)\cross\Gam$ where $\Gam$ acts trivially on $N$. Taking the (in this case constant) integral decomposition of $\LinfN$ over its center $\Linf$, we can write $\LinfN=\dint_XN\,d\mu(x)$. We refer to section~\ref{sec:directint} where we collected the basic definitions and properties of direct integrals. The main structural result on the Cartan subalgebras of $\cM$ is the following theorem, which will be proved in section~\ref{sec:structure}.

\begin{restatable}{thm}{thmstructure}\label{thm:structure}
	Let $\Gam\in\cC_{rss}$ and $N$ be a \II factor. Suppose $(X,\mu)$ is a standard probability space and $\Gam\act X$ is a free ergodic pmp action. Consider $\cM:=(\Linf\cross\Gam)\otb N$ and let $A\sub\cM$ be a subalgebra. Then the following are equivalent:
	\begin{enumerate}
		\item $A$ is a Cartan subalgebra of $\cM$,
		\item $A$ is unitarily conjugate to a subalgebra $B$ of the form $B=\dint_X B_x\,d\mu(x) \sub L^\infty(X)\otb N$ satisfying
		\begin{itemize}
			\item $B_x$ is a Cartan subalgebra of $N$ for almost every $x$,
			\item For every $g\in \Gam$, $B_x$ is unitarily conjugate to $B_{gx}$ inside $N$ for almost every $x$.
		\end{itemize} 
	\end{enumerate}
\end{restatable}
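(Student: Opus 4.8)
The plan is to prove the two implications separately, after recording the reformulation $\cM=(\LinfN)\cross\Gam$ in which $\Gam$ acts trivially on $N$ and by the given action on $\Linf$; write $\sigma_g\in\Aut(\LinfN)$ for the resulting automorphisms, so that $\sigma_g(\dint_X B_x\,d\mu)=\dint_X B_{g\inv x}\,d\mu$ for any measurable field of subalgebras. Since unitary conjugacy preserves the property of being a Cartan subalgebra, in both directions it suffices to work with the algebra $B=\dint_X B_x\,d\mu$ of item (2) rather than with $A$ itself. I will freely use the direct integral formalism of section~\ref{sec:directint}, in particular that $\dint_X B_x\,d\mu$ is maximal abelian in $\LinfN=\dint_X N\,d\mu$ if and only if $B_x$ is maximal abelian in $N$ for almost every $x$, and that its relative commutant and normalizer decompose fibrewise.

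For the implication (2)$\Rightarrow$(1), I would first observe that $B$ is maximal abelian in $\cM$: by the fibrewise criterion it is maximal abelian in $\LinfN$, hence contains the centre $\Linf$, and since $\Gam\act X$ is free the relative commutant of $\Linf$ in $\cM$ equals $\LinfN$, so $B'\cap\cM=B'\cap(\LinfN)=B$. It remains to produce enough normalizers. Fibrewise the normalizers of $B_x$ in $N$ generate $N$ for a.e.\ $x$, which assembles (via a measurable selection argument) into a family of normalizers of $B$ in $\LinfN$ generating $\LinfN$. To reach the rest of $\cM$, fix $g\in\Gam$ and use the hypothesis that $B_{g\inv x}$ is unitarily conjugate to $B_x$ in $N$ for a.e.\ $x$ to select, again measurably, a unitary field $(w^g_x)_x$ in $N$ with $w^g_x B_{g\inv x}(w^g_x)^*=B_x$; setting $W_g=\dint_X w^g_x\,d\mu\in\cU(\LinfN)$ one checks $W_g\sigma_g(B)W_g^*=B$, so that $V_g:=W_g u_g$ normalizes $B$. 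As $W_g\in\LinfN$ and $u_g=W_g^*V_g$, the normalizers already found together with the $V_g$ generate $\cM$, so $B$ is a Cartan subalgebra.

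The implication (1)$\Rightarrow$(2) is where the hypothesis $\Gam\in\cC_{rss}$ enters. Given a Cartan subalgebra $A$ of $\cM$, I would apply the relative strong solidity dichotomy (Definition~\ref{def:rss}) to the inclusion $\LinfN\sub\cM=(\LinfN)\cross\Gam$ and the abelian, hence relatively amenable, subalgebra $A$: either $A\prec_\cM\LinfN$, or $\cN_\cM(A)''$ is amenable relative to $\LinfN$. Since $A$ is Cartan we have $\cN_\cM(A)''=\cM$, which is not amenable relative to $\LinfN$ because $\Gam$ is non-amenable; hence $A\prec_\cM\LinfN$. Because $A$ is a regular maximal abelian subalgebra, this intertwining can be upgraded by a standard argument (spreading out the intertwining partial isometry along the normalizer of $A$) to an honest unitary conjugacy: there is $u\in\cU(\cM)$ with $B:=uAu^*\sub\LinfN$. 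Conjugation keeps $B$ maximal abelian in $\cM$, so $B\supseteq\Linf$ and $B$ is maximal abelian in $\LinfN$; by the fibrewise criterion $B=\dint_X B_x\,d\mu$ with $B_x$ maximal abelian in $N$ a.e.

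It remains to extract the two bullet points from regularity of $B$ in $\cM$, and this I expect to be the main obstacle. The idea is a Fourier/support analysis of $\cN_\cM(B)$: using $B\supseteq\Linf$, maximality of $B$, and freeness of $\Gam\act X$, every normalizer of $B$ must be supported on the subgroup $G=\{g\in\Gam:\sigma_g(B)\text{ is unitarily conjugate to }B\text{ in }\LinfN\}$, and one obtains $\cN_\cM(B)''=\big(\dint_X\cN_N(B_x)''\,d\mu\big)\cross G$. Equating this with $\cM$ forces, on the one hand, $\cN_N(B_x)''=N$ for a.e.\ $x$, so that $B_x$ is a Cartan subalgebra of $N$, and on the other hand $G=\Gam$, which by the fibrewise description of unitary conjugacy means exactly that $\sigma_g(B)$ is unitarily conjugate to $B$ for every $g$, i.e.\ $B_{g\inv x}$ is unitarily conjugate to $B_x$ (equivalently $B_x$ to $B_{gx}$) in $N$ for a.e.\ $x$. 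The delicate points throughout are the measurable selections of conjugating unitaries, for which one needs the measurable structure on the space of Cartan subalgebras of $N$ and its unitary conjugacy relation from section~\ref{sec:directint}, together with the bookkeeping of the crossed-product normalizer computation.
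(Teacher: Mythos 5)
Your implication (2)$\Rightarrow$(1) is essentially the paper's proof of Lemma~\ref{lem:Cartaniff}: maximal abelianness via freeness of the action, fibrewise normalizers assembled by measurable selection, and the unitaries $W_gu_g$ witnessing regularity. The only deviation is that you produce the conjugating unitaries $W_g$ by a direct measurable selection, whereas the paper proves Lemma~\ref{lem:meas} by Popa intertwining plus a maximality/patching argument; your route also works (the relevant set of pairs $(x,u)$ is Borel, so von Neumann selection applies), so this direction is fine.

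The genuine gap is in (1)$\Rightarrow$(2), at the sentence claiming that $A\emb_\cM\LinfN$ ``can be upgraded by a standard argument (spreading out the intertwining partial isometry along the normalizer of $A$) to an honest unitary conjugacy'' $uAu^*\sub\LinfN$. The spreading-out argument you are invoking is \cite[Theorem~A.1]{Po01}, and it requires \emph{both} algebras to be Cartan subalgebras of the ambient \II factor. Here the target $\LinfN$ is not abelian, hence not Cartan in $\cM$, and no off-the-shelf statement converts $A\emb_\cM\LinfN$ into unitary conjugacy into $\LinfN$. What the intertwining actually gives is a projection $p\in A$, a projection $q\in\LinfN$, a homomorphism $\psi:Ap\rarrow q(\LinfN)q$ and a partial isometry $v$ with $\psi(a)v=va$; even after making $\psi(Ap)$ maximal abelian via \cite[Lemma~1.5]{Io11} and arranging $vv^*=q$, one only knows that a \emph{corner} of $A$ is carried by $\Ad(v)$ onto a Cartan subalgebra of the corner $q\cM q$ lying inside $q(\LinfN)q$. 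Before \cite[Theorem~A.1]{Po01} can be applied, one must manufacture a Cartan subalgebra $B$ of all of $\cM$, contained in $\LinfN$, with $A\emb_\cM B$. That is precisely the content of the Claim inside the paper's Lemma~\ref{lem:AuB}, and it is the technical heart of this direction: one cuts $q$ to a subprojection of constant central trace $\frac{1}{n}\een_{X_0}$ over a positive-measure set $X_0$ (using \cite[Proposition~7.17]{SZ79}), amplifies $\psi(Ap')$ by $n$ equivalent orthogonal projections to obtain a Cartan subalgebra $\tilde{B}$ of $L^\infty(X_0)\otb N$, verifies the fibrewise conjugacy condition for $\tilde{B}$ (using that the Fourier-coefficient analysis of Lemma~\ref{lem:Cartaniff} goes through for corners), and then---crucially using ergodicity of $\Gam\act X$---extends the field $x\mapsto\tilde{B}_x$ from $X_0$ to all of $X$ so that Lemma~\ref{lem:Cartaniff} produces a genuine Cartan subalgebra $B$ of $\cM$. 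Only then does $A\emb_\cM B$, with both algebras Cartan, yield the desired unitary.

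A secondary, smaller issue: your asserted formula $\cN_\cM(B)''=\bigl(\dint_X\cN_N(B_x)''\,d\mu(x)\bigr)\cross G$, with $G$ the subgroup of $g$ for which $\sigma_g(B)\sim_u B$ almost everywhere, is stated without proof and is dubious as written, since a normalizer of $B$ may have its $g$-th Fourier coefficient supported on a proper positive-measure subset of $X$ on which only a local conjugacy holds, so its support need not lie in $G$. The paper avoids any such global support claim: it approximates $u_g$ in $\norm{\cdot}_2$ by linear combinations of normalizers to show that for almost every $x$ some normalizer has nonvanishing $g$-th coefficient at $x$, takes a polar decomposition to get $B_{g^{-1}x}\emb_N B_x$, and then uses factoriality of $N$ together with \cite[Theorem~A.1]{Po01} applied inside $N$ (where both fibres are Cartan) to conclude fibrewise unitary conjugacy.
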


Moreover it will follow from the proof of Theorem~\ref{thm:structure} that two Cartan subalgebras $A,B$ of $\cM$ contained in $\LinfN$ are unitarily conjugate if and only if $A_x$ is unitarily conjugate to $B_x$ for almost every $x\in X$, where we wrote $A=\dint_X A_x\,dx$ and $B=\dint_X B_x\,dx$. It turns out that this is in general not the case anymore for conjugacy by an automorphism. However, in specific cases, a similar result will hold (see section~\ref{sec:Aut}). We also note that the assumption $\Gam\in\cC_{rss}$ is only needed for the proof of $(1)\Rightarrow (2)$. In particular, for any countable group $\Gam$, every subalgebra $B$ as in $(2)$ of Theorem~\ref{thm:structure} will be a Cartan subalgebra of $(L^\infty(X)\cross\Gam)\otb N$.

One can also derive the following corollary in case $N$ has at most one Cartan subalgebra up to unitary conjugacy.

\begin{corollary}
	If $N$ has either no Cartan subalgebras or a unique Cartan subalgebra up to unitary conjugacy, then the same holds for $\cM=(\Linf\cross\Gam)\otb N$ as in Theorem~\ref{thm:structure}.
\end{corollary}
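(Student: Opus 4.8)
The plan is to derive this corollary directly from Theorem~\ref{thm:structure}, reducing each case to the structural characterization of Cartan subalgebras of $\cM$ it provides.

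The plan is to deduce both statements directly from Theorem~\ref{thm:structure} together with the "moreover" remark on unitary conjugacy that follows it, treating the two hypotheses on $N$ as separate cases. First suppose $N$ has no Cartan subalgebras. If $A$ were a Cartan subalgebra of $\cM$, then by the implication $(1)\Rightarrow(2)$ of Theorem~\ref{thm:structure} it would be unitarily conjugate to some $B=\dint_X B_x\,d\mu(x)\sub\LinfN$ with $B_x$ a Cartan subalgebra of $N$ for almost every $x$. Since $N$ has no Cartan subalgebra at all, no such measurable field $(B_x)_x$ can exist, so $\cM$ has no Cartan subalgebra either.

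Now suppose $N$ has a unique Cartan subalgebra $C$ up to unitary conjugacy. First I would check that $\cM$ has at least one Cartan subalgebra: the constant field $\Linf\otb C=\dint_X C\,d\mu(x)$ satisfies both bulleted conditions in part $(2)$ of Theorem~\ref{thm:structure} trivially (every fiber is the Cartan subalgebra $C$, and $C$ is unitarily conjugate to itself), so by $(2)\Rightarrow(1)$ it is Cartan in $\cM$. For uniqueness, take two Cartan subalgebras $A,A'$ of $\cM$. By Theorem~\ref{thm:structure} they are unitarily conjugate to fields $B=\dint_X B_x\,d\mu(x)$ and $B'=\dint_X B'_x\,d\mu(x)$ inside $\LinfN$ whose fibers are almost everywhere Cartan in $N$. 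By uniqueness in $N$, for almost every $x$ both $B_x$ and $B'_x$ are unitarily conjugate to $C$, hence to each other. Invoking the "moreover" statement following Theorem~\ref{thm:structure}, which gives unitary conjugacy of two Cartan subalgebras of $\cM$ contained in $\LinfN$ precisely when their fibers are almost everywhere unitarily conjugate, I conclude that $B$ and $B'$, and therefore $A$ and $A'$, are unitarily conjugate in $\cM$.

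The one point that genuinely needs the structural theorem, rather than a pointwise argument, is the passage from almost-everywhere fiberwise unitary conjugacy of $B_x$ and $B'_x$ to a single unitary of $\cM$ conjugating $B$ onto $B'$: a measurable field of unitaries implementing the fiberwise conjugacies must be assembled into one element of the unitary group of $\cM$. This is exactly what the "moreover" remark supplies, so here it can simply be cited; the only routine checks are that $B$ and $B'$ indeed lie in $\LinfN$ (immediate from the form given by Theorem~\ref{thm:structure}) and that the constant field used for existence is a genuine Cartan subalgebra.
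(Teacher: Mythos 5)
Your proof is correct and follows exactly the derivation the paper intends: the paper states this corollary without a separate proof, as an immediate consequence of Theorem~\ref{thm:structure} together with the ``moreover'' remark that two Cartan subalgebras of $\cM$ inside $\LinfN$ are unitarily conjugate if and only if their fibers are almost everywhere unitarily conjugate in $N$ (which is what Lemma~\ref{lem:meas} supplies). Your case analysis, the use of the constant field $\Linf\otb C$ for existence, and the fiberwise-to-global conjugacy step are precisely that argument.
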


For any Polish group $G$ acting continuously on a Polish space $Y$ we will write $\cR(G\act Y)$ for its corresponding orbit equivalence relation. The proof of theorem~\ref{thm:structure} will lead to a criterion guaranteeing that the Cartan subalgebras of $\cM$ are not classifiable by countable structures. This will allow us to prove our main result below in section~\ref{sec:nonclass}. Note that we don't need the assumption $\Gam\in\cC_{rss}$ here, since we only use the part of Theorem~\ref{thm:structure} which doesn't need this assumption, namely the proof that $(2)\Rightarrow (1)$.

\begin{restatable}{thm}{thmnonclass}\label{thm:nonclass}
	Let $\Gam$ be a countable group and $N$ be a \II factor. Suppose $(X,\mu)$ is a standard probability space and $\Gam\act X$ is a free ergodic pmp action that is not strongly ergodic. Assume furthermore that $\cR(\cU(N)\act\Cartan(N))$ is not smooth. Then the Cartan subalgebras of $\cM:=(\Linf\cross\Gam)\otb N$ up to unitary conjugacy are not classifiable by countable structures.
\end{restatable}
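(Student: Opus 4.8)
The plan is to exhibit a Borel reduction of a non-classifiable equivalence relation into the unitary conjugacy relation on $\Cartan(\cM)$, using only the implication $(2)\Rightarrow(1)$ of Theorem~\ref{thm:structure} (which does not require $\Gam\in\cC_{rss}$) together with the quoted remark that two Cartan subalgebras of $\cM$ lying inside $\LinfN$ are unitarily conjugate in $\cM$ if and only if their fibres are unitarily conjugate in $N$ almost everywhere. It therefore suffices to manufacture, in a Borel way, a large supply of Cartan subalgebras of $\cM$ of direct-integral form inside $\LinfN$ whose unitary conjugacy relation encodes a relation that is not classifiable by countable structures.

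First I would apply the Glimm--Effros dichotomy (Harrington--Kechris--Louveau, in its form for orbit equivalence relations of Polish group actions) to the non-smooth relation $\cR(\cU(N)\act\Cartan(N))$, obtaining a Borel map $\phi:2^\N\to\Cartan(N)$ with $s\,E_0\,t$ if and only if $\phi(s)$ and $\phi(t)$ are unitarily conjugate in $N$, where $E_0$ denotes eventual equality on $2^\N$. For a measurable field $f:X\to 2^\N$ set $B^f:=\dint_X\phi(f(x))\,d\mu(x)\sub\LinfN$. By $(2)\Rightarrow(1)$ of Theorem~\ref{thm:structure}, $B^f$ is a Cartan subalgebra of $\cM$ exactly when $\phi(f(x))$ and $\phi(f(gx))$ are unitarily conjugate in $N$ for a.e.\ $x$ and every $g$, that is, when $f(gx)\,E_0\,f(x)$ a.e.\ for every $g\in\Gam$; call such an $f$ \emph{admissible}. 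By the remark, $B^f$ and $B^{f'}$ are unitarily conjugate in $\cM$ if and only if $f(x)\,E_0\,f'(x)$ a.e. Hence $f\mapsto B^f$ is a Borel reduction of the relation $F$ --- almost-everywhere $E_0$-equality on the space of admissible fields $X\to 2^\N$ --- into $\cR(\cU(\cM)\act\Cartan(\cM))$, and everything reduces to proving that $F$ is not classifiable by countable structures.

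Here the hypothesis on $\Gam\act X$ is decisive. The relation $F$ is exactly the orbit equivalence relation of the translation action of the Polish group $G=L^0\!\big(X,\bigoplus_\N\Z/2\big)$ (almost-everywhere finitely supported fields, with convergence in measure) on the $G$-invariant Borel set $Y_0$ of admissible fields, which I would present as a Polish $G$-space in the standard (Becker--Kechris) way. When $\Gam\act X$ is strongly ergodic, every admissible field is a.e.\ $E_0$-equivalent to a constant one, so $Y_0$ reduces to the constant classes and $F$ collapses to $E_0$, which is classifiable; the failure of strong ergodicity is precisely what enlarges $Y_0$ beyond the constant classes. Concretely, non-strong ergodicity yields a diffuse algebra of asymptotically invariant sequences, and after diagonalising over the countable group $\Gam$ so that $\sum_n\mu(gA_n\triangle A_n)<\infty$ for every $g$, Borel--Cantelli shows the associated fields $x\mapsto(1_{A_n}(x))_n$ are admissible and non-trivial. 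The point is that the almost-everywhere quantifier lets the finite symmetric-difference bound depend on $x$, which is exactly what carries $F$ beyond the (classifiable) relation $E_0$ into the $L^0$-regime where turbulence occurs. I would then verify that $G\act Y_0$ is turbulent, using this diffuse supply of asymptotically invariant sequences to witness the density of local orbits; by Hjorth's theorem the orbit equivalence relation of a turbulent action is not classifiable by countable structures, so $F$ is not, and since $F\leq_B\cR(\cU(\cM)\act\Cartan(\cM))$ and classifiability by countable structures is downward closed under Borel reducibility, neither is $\cR(\cU(\cM)\act\Cartan(\cM))$.

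The main obstacle is this turbulence verification. Density of orbits in $Y_0$ is immediate, and the hypothesis guarantees that orbits are non-trivial, but establishing that every orbit is meager and that the local orbits are somewhere dense is the crux, and it is here that the diffuseness provided by non-strong ergodicity must be exploited quantitatively. A secondary technical point is equipping the Borel set $Y_0$ with a suitable finer Polish group-action topology preserving the orbit equivalence relation, so that Hjorth's machinery applies. The remaining ingredients --- the Glimm--Effros dichotomy, the Borel measurability of $f\mapsto B^f$, and the identification of unitary conjugacy in $\cM$ with almost-everywhere $E_0$-equality --- are comparatively routine given Theorem~\ref{thm:structure} and the quoted remark.
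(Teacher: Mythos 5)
Your first half is, in substance, exactly the paper's reduction, and it is correct: the Glimm--Effros reduction $\phi$ of $E_0$ into $\cR(\cU(N)\act\Cartan(N))$, the direct integrals $B^f=\dint_X\phi(f(x))\,d\mu(x)$, the characterization of when $B^f$ is Cartan in $\cM$, and the equivalence of $B^f\sim_u B^{f'}$ in $\cM$ with $f(x)\,E_0\,f'(x)$ a.e.\ are the content of Lemma~\ref{lem:Cartaniff}, Lemma~\ref{lem:eqrel} (Step 1) and Theorem~\ref{thm:E_0smallest}; your relation $F$ on admissible fields is precisely the paper's $(\Hom(\cR(\Gam\act X),E_0),\sim)$ from Theorem~\ref{thm:nonstrongE_01}. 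So you have correctly reduced the theorem to the statement that $F$ is not classifiable by countable structures.

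That statement, however, is the actual content of the theorem, and your proposal does not prove it. You propose to realize $F$ as the orbit equivalence relation of the translation action of $G=L(X,\mu,\oplus_\N\Z/2\Z)$ on the set $Y_0$ of admissible fields and to verify Hjorth turbulence of $G\act Y_0$; but the verification of the two nontrivial conditions (all orbits meager, local orbits somewhere dense) is exactly where non-strong ergodicity must be exploited quantitatively, and you explicitly leave it unproved. Moreover, the framework itself has a genuine obstruction, not a secondary one: $Y_0$ is merely a Borel subset of $L(X,\mu,2^\N)$ (the admissibility condition involves an a.e.\ eventual-equality quantifier and there is no reason for it to be $G_\delta$), so it is not Polish in the convergence-in-measure topology; passing to a finer Polish topology in the Becker--Kechris fashion changes which sets are comeager, which orbits are dense, and what the local orbits are, so neither your immediate orbit-density claim nor Hjorth's theorem can be invoked without redoing everything in the new topology. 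The paper sidesteps precisely this difficulty: it never proves turbulence of any action on $Y_0$. Instead (Theorem~\ref{thm:nonstrongE0} together with Proposition~\ref{prop:crit}) it starts from the known turbulent action $\ell^1\act\R^\N$ and constructs an explicit map $(0,1)^\N\rarrow\Hom(\cR(\Gam\act X),E_0)$, ${\bf t}\mapsto A^{\bf t}$, by pulling back the half-circle cylinder sets $B^{\bf t}_n\sub\prod_\N S^1$ through a factor map $\theta$ onto the rotation action of $\oplus_\N\Z$ (Theorem~\ref{thm:JS}, Dye, and Lemma~\ref{lem:aiseq}), and then checks two elementary estimates: ${\bf s}-{\bf t}\in\ell^1$ forces $A^{\bf s}\sim A^{\bf t}$, and no comeager set of parameters is collapsed, using $\nu(B^{\bf s}_n\Del B^{\bf t}_n)=2\min\{\abs{s_n-t_n},1-\abs{s_n-t_n}\}$. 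Generic ergodicity is then pulled back through this homomorphism, so no new turbulence theorem is ever needed. Note also that your Borel--Cantelli device produces individual admissible fields, hence only countably many points of $Y_0$; what is missing is a continuum-parametrized, generically non-collapsing family of such fields, and supplying it is exactly what the paper's $(0,1)^\N$-construction does.
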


Together with the results from \cite{SV11} this gives the first concrete family of (non-hyperfinite) \II factors whose Cartan subalgebras up to unitary conjugacy are not classifiable by countable structures. Indeed, one can take any countable group $\Gam$ admitting an ergodic non-strongly ergodic action (i.e. without property (T), cf. \cite{CW80}. In fact given any non-property (T) group $\Gam$, the generic action of $\Gam$ is ergodic but not strongly ergodic, see \cite[Corollary~12.4]{Kec10}.) One can then consider any free ergodic non-strongly ergodic pmp action of $\Gam$ on a standard probability space $(X,\mu)$. Taking $N$ to be the \II factor from \cite[Theorem~2(1)]{SV11}, $(L^\infty(X)\cross\Gam)\otb N$ will satisfy all assumptions in the above theorem.

In the course of the proof of Theorem~\ref{thm:nonclass} we will establish the following result, which appears to be of independent interest. We denote by $\Hom(\cR(\Gam\act X),E_0)$ the space of homomorphisms (see Definition~\ref{def:hom}) from $\cR(\Gam\act X)$ to $E_0$, where $E_0$ is the equivalence relation on $\{0,1\}^\N$ given by
\[
{\bf x}E_0{\bf y} \quad \Leftrightarrow \quad \exists N\in\N, \forall n\geq N: x_n=y_n.
\]
Also, for $\varphi,\psi\in \Hom(\cR(\Gam\act X),E_0)$ we let $\varphi\sim\psi$ if and only if $\varphi(x)E_0\psi(x)$ almost everywhere.

\begin{thm}\label{thm:nonstrongE_01}
	Let $\Gam$ be a countable group, $(X,\mu)$ a standard probability space and $\Gam\act X$ an ergodic pmp action that is not strongly ergodic. Then $(\Hom(\cR(\Gam\act X),E_0),\sim)$ is not classifiable by countable structures.
\end{thm}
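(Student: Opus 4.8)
The plan is to recognize $(\Hom(\cR(\Gam\act X),E_0),\sim)$ as the orbit equivalence relation of a Polish group action and then to show that this action is generically turbulent, so that non-classifiability by countable structures follows from Hjorth's turbulence theorem. First I would observe that $\Hom(\cR,E_0)$ is an abelian group under coordinatewise addition modulo $2$: since $E_0$ is the coset relation of the subgroup $\bigoplus_n\Z/2\Z\sub\prod_n\Z/2\Z=2^{\N}$, the homomorphism condition is preserved by addition and subtraction. The group $\cG:=L^0(X,\bigoplus_n\Z/2\Z)$ of measurable maps into $\bigoplus_n\Z/2\Z$, equipped with convergence in measure, is a Polish group and a subgroup of $\Hom(\cR,E_0)$ (any $\delta\in\cG$ takes values with finite support, so $\delta(x)\,E_0\,0\,E_0\,\delta(gx)$ automatically). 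Moreover $\varphi\sim\psi$ holds precisely when $\varphi-\psi\in\cG$. Hence $\sim$ is the orbit equivalence relation of the translation action $\cG\act\Hom(\cR,E_0)$, where $\Hom(\cR,E_0)$ carries the Borel structure inherited from the Polish space $L^0(X,2^{\N})$.

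Next I would use the hypothesis to exhibit a non-trivial class. Non-strong ergodicity yields an asymptotically invariant sequence of Borel sets $A_n\sub X$ with $\mu(A_n)=1/2$ and $\mu(gA_n\triangle A_n)\to0$ for every $g\in\Gam$. Enumerating $\Gam=\{g_i\}$ and passing to a fast subsequence $(A_{n_k})_k$ with $\mu(g_iA_{n_k}\triangle A_{n_k})<2^{-k}$ for all $i\le k$, the Borel--Cantelli lemma gives, for each fixed $g$ and almost every $x$, that $1_{A_{n_k}}(x)=1_{A_{n_k}}(gx)$ for all large $k$; intersecting over the countably many $g$ shows that $\psi_0:=(1_{A_{n_k}})_k$ is a genuine element of $\Hom(\cR,E_0)$. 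Since $\mu(A_{n_k})=1/2$ does not tend to $0$, the functions $1_{A_{n_k}}$ cannot converge to $0$ almost everywhere, so $\psi_0(x)$ fails to have finite support on a positive-measure set, i.e. $\psi_0\notin\cG$. This is exactly the measurable characterization of strong ergodicity as $E_0$-ergodicity due to Jones--Schmidt; in particular $\Hom(\cR,E_0)/\sim$ has at least two classes (already the constant homomorphisms contribute a copy of $2^{\N}/E_0$).

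I would then verify generic turbulence of $\cG\act\Hom(\cR,E_0)$, after equipping $\Hom(\cR,E_0)$ with a suitable invariant Polish topology refining its Borel structure (Becker--Kechris change of topology), the metric estimates being carried out in convergence in measure. Two of the three conditions are soft: every orbit is dense, because $\bigoplus_n\Z/2\Z$ is dense in $2^{\N}$ so $\cG$ is dense in $L^0(X,2^{\N})\supseteq\Hom(\cR,E_0)$ and each coset $\varphi+\cG$ is therefore dense; and every orbit is meager, because if one coset were non-meager then $\cG$ would be non-meager, hence comeager and clopen by Pettis, hence (being dense) all of $\Hom(\cR,E_0)$, contradicting $\psi_0\notin\cG$. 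The hard part will be the local orbit condition, that every local orbit $\mathcal{O}(\varphi,U,V)$ is somewhere dense, and this is exactly where non-strong ergodicity is indispensable: in the strongly ergodic case $\Hom(\cR,E_0)/\sim$ reduces to the classifiable relation $E_0$ coming from the constants, so local orbits must fail to be somewhere dense. The mechanism I would exploit is that $\psi_0$ lives in arbitrarily high coordinates (it may be shifted up in $\N$), so that adding suitable truncations of $\psi_0$---each an element of $\cG$, and arbitrarily small in measure---lets one chain small steps from $\varphi$ toward the neighborhood of $\varphi+\psi_0$ while remaining inside a prescribed $V$, performing the modifications on pieces of $X$ of small measure.

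Having established generic turbulence, I would conclude by invoking Hjorth's theorem that a generically turbulent Polish group action is not classifiable by countable structures. The main obstacle is the local orbit condition above: making precise, uniformly over the basic neighborhoods, that the asymptotically invariant directions provided by $\psi_0$ allow travel between $\sim$-classes through chains of small homomorphism-valued steps, and simultaneously arranging the invariant Polish topology so that the foregoing density and meagerness computations, carried out in measure, remain valid.
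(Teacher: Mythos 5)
Your identification of $\sim$ as the orbit equivalence relation of the translation action of $\cG=L^0(X,\oplus_n\Z/2\Z)$ on $\Hom(\cR(\Gam\act X),E_0)$ is correct, and it is a genuinely different strategy from the paper's (the paper never topologizes $\Hom$ as a $\cG$-space; it keeps it only as a standard Borel space and transfers the turbulence of the $\ell^1$-action on $(0,1)^\N$ through a Borel homomorphism built from pulled-back almost invariant sets, via Proposition~\ref{prop:crit}). But there is a gap in your plan which I believe is fatal as stated: you never obtain a Polish space on which Hjorth's theorem (Theorem~\ref{thm:Hjo00}) can be applied, while all three of your turbulence verifications are carried out in a topology in which $\Hom$ is not Polish. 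Indeed, $\Hom(\cR(\Gam\act X),E_0)$ is Borel in $L^0(X,2^\N)$ but is neither closed nor $G_\delta$ there: the defining condition reads \textquotedblleft for all $g$ and all $\eps>0$ there is $N$ with $\mu(\{x:\forall n\geq N,\ \varphi(gx)_n=\varphi(x)_n\})>1-\eps$", and the inner function of $\varphi$ is merely upper semicontinuous in the measure topology, so $\Hom$ sits at level roughly $\Pi^0_3$; a subspace of a Polish space is Polish in the subspace topology only if it is $G_\delta$. Worse, every $\cG$-orbit is dense in \emph{all} of $L^0(X,2^\N)$, so orbit closures escape $\Hom$ entirely --- in sharp contrast with the cocycle setting of \cite[Corollary~27.4]{Kec10} exploited in section~\ref{sec:R}, where the cocycle identity is closed under convergence in measure and one can work inside the Polish space $\overline{B^1}\sub Z^1$. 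Once you invoke Becker--Kechris to retopologize, the new open sets have nothing to do with the measure metric, so your density argument (truncation), your meagerness argument (Pettis, which in addition requires $\Hom$ to be a Baire space), and your local-orbit chaining all evaporate; you acknowledge this dependence but give no mechanism to resolve it, and it is precisely the crux.

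There is a sharper way to see that the architecture cannot be completed as designed: it never genuinely uses non-strong ergodicity, so it would prove too much. In the subspace (measure) topology, conditions (i) and (iii) of turbulence hold for free for \emph{every} ergodic action: orbits are dense because $\cG$ is dense in $L^0(X,2^\N)$, and for local orbits, any $h\in\cG$ at $L^0$-distance less than $\eps$ from $0$ can be written as a finite sum of its restrictions to pieces of $X$ of measure less than $\del$, added one at a time, with all partial sums staying in the ball and in $\Hom$; hence $\overline{\cO(\varphi,U,V)}$ contains a relatively open set always. (Your specific mechanism is also off: truncations of $\psi_0$ are \emph{not} small in $\cG$, whose group metric is $d_\cG(f,0)=\mu(\{x: f(x)\neq 0\})$, since each truncation has support of measure at least $1/2$; smallness can only come from cutting down in the $X$-direction, which is the soft argument just described.) Likewise, $\cG$ is always a proper dense Borel subgroup of $\Hom$ --- properness needs no $\psi_0$, as the constant map at $(1,1,1,\dots)$ already lies in $\Hom\setminus\cG$ --- so Pettis would always yield meager orbits if $\Hom$ were Baire. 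Thus, if your three verifications went through, you would conclude non-classifiability for every ergodic action, contradicting the fact that for strongly ergodic actions $(\Hom(\cR(\Gam\act X),E_0),\sim)$ is Borel bireducible with $E_0$ (every homomorphism is trivial by \cite[Theorem~A2.2]{HK05}) and hence classifiable by countable structures. So your diagnosis that condition (iii) is where non-strong ergodicity is indispensable is mistaken: the hypothesis must instead be used to produce a Polish $\cG$-space sitting inside $\Hom$ on which the estimates survive, or --- as the paper does --- to produce a rich Borel family of homomorphisms indexed by $(0,1)^\N$ and then apply Proposition~\ref{prop:crit}, which requires no topology on $\Hom$ at all; nothing in your outline accomplishes either.
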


Since an action that is strongly ergodic (also called $E_0$-ergodic) does not admit any nontrivial homomorphisms to $E_0$ (see \cite{JS87}, or \cite[Theorem~A2.2]{HK05} for a proof of exactly this statement), we get the following nice dichotomy for actions of countable groups.

\begin{corollary}
	Let $\Gam$ be a countable group, $(X,\mu)$ a standard probability space and $\Gam\act X$ an ergodic pmp action. Then $(\Hom(\cR(\Gam\act X),E_0),\sim)$ is either trivial or not classifiable by countable structures, depending on whether the action is strongly ergodic or not.
\end{corollary}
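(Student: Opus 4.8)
The plan is to obtain the statement directly as a dichotomy, splitting on whether or not the ergodic action $\Gam\act X$ is strongly ergodic and treating the two regimes separately. In one regime the space of homomorphisms collapses to a point, and in the other we invoke Theorem~\ref{thm:nonstrongE_01}; since these are the only two possibilities for an ergodic action, and since their conclusions are mutually incompatible, they assemble into the asserted dichotomy.

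First I would dispose of the strongly ergodic case, where the claim is that $(\Hom(\cR(\Gam\act X),E_0),\sim)$ is trivial. This is really the definition of strong ergodicity ($E_0$-ergodicity) unwound through the characterization recalled just above the corollary (from \cite{JS87}, or \cite[Theorem~A2.2]{HK05}): for a strongly ergodic action, every $\varphi\in\Hom(\cR(\Gam\act X),E_0)$ is $\sim$-equivalent to a constant map, i.e.\ $\varphi(x)E_0\varphi(y)$ for almost every pair $(x,y)$, and ergodicity then pins down a single fixed $E_0$-class $c$ with $\varphi(x)E_0 c$ almost everywhere. Consequently all homomorphisms lie in one $\sim$-class, the quotient $(\Hom(\cR(\Gam\act X),E_0),\sim)$ is a single point, and in particular it is trivial (hence smooth, and a fortiori classifiable by countable structures). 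The one point requiring care is to check that the cited notion of a homomorphism being trivial matches precisely the relation $\sim$ used here; this is immediate, since both are defined by $E_0$-equivalence almost everywhere.

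Second, for actions that are ergodic but not strongly ergodic, there is nothing further to prove: this is exactly the content of Theorem~\ref{thm:nonstrongE_01}, which under precisely these hypotheses ($\Gam$ countable, $(X,\mu)$ standard, and $\Gam\act X$ ergodic but not strongly ergodic) asserts that $(\Hom(\cR(\Gam\act X),E_0),\sim)$ is not classifiable by countable structures. So I would simply quote that theorem. To finish, I would note that the two cases are jointly exhaustive and mutually exclusive, and that their conclusions cannot overlap: a one-point equivalence relation sits at the very bottom of the complexity hierarchy, whereas non-classifiability by countable structures lies far above it. I do not expect any genuine obstacle in the corollary itself, as all the analytic difficulty has already been absorbed into Theorem~\ref{thm:nonstrongE_01}; the only mild bookkeeping is the identification of the two notions of trivial homomorphism in the strongly ergodic case.
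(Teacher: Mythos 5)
Your treatment of the non--strongly ergodic case is correct and is exactly the paper's route: that half is precisely Theorem~\ref{thm:nonstrongE_01}, quoted verbatim. The genuine gap is in the strongly ergodic case. From the cited fact (\cite{JS87}, \cite[Theorem~A2.2]{HK05}) you correctly obtain, for each individual homomorphism $\varphi$, a single $E_0$-class $[c_\varphi]$ with $\varphi(x)E_0c_\varphi$ for almost every $x$; but the next step, ``consequently all homomorphisms lie in one $\sim$-class,'' is a non sequitur and is false. The class $[c_\varphi]$ depends on $\varphi$, and nothing forces two homomorphisms to select the same class: every constant map is a homomorphism, and the constant maps at ${\bf c}=(0,0,0,\dots)$ and ${\bf d}=(1,1,1,\dots)$ are not $\sim$-equivalent since ${\bf c}\cancel{E}_0{\bf d}$. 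In fact ${\bf c}\mapsto[x\mapsto{\bf c}]$ is a Borel reduction of $E_0$ to $(\Hom(\cR(\Gam\act X),E_0),\sim)$, so in the strongly ergodic case this relation has uncountably many classes and is not even smooth; it is (after a short uniformization argument selecting a representative $c_\varphi$ measurably) Borel bi-reducible with $E_0$.

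Consequently ``trivial'' in the corollary cannot mean ``one $\sim$-class'' (nor ``smooth''): it means, as in the sentence of the paper immediately preceding the corollary, that the space $\Hom(\cR(\Gam\act X),E_0)$ consists only of trivial homomorphisms, i.e.\ every homomorphism maps a conull set into a single $E_0$-class. With this reading your overall architecture survives: the two cases are exhaustive, the strongly ergodic half is exactly the cited fact and needs no further argument, and the alternatives remain mutually exclusive---not because one side is a point, but because an equivalence relation bi-reducible with $E_0$ is still classifiable by countable structures ($E_0$ is the orbit equivalence relation of the coordinatewise action of the countable discrete group $\oplus_\N\Z/2\Z$ on $2^\N$, so the result of \cite{Kec92} quoted in the paper applies), while the other side is not. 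So you should delete the ``single point, hence smooth'' conclusion, replace it with ``every homomorphism is trivial,'' and justify the exclusivity of the two alternatives via classifiability of $E_0$ by countable structures rather than via smoothness.
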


Moreover, it follows easily from Theorem~\ref{thm:structure} and Theorem~\ref{thm:nonclass} that the \II factors involved there actually satisfy the following dichotomy property.

\begin{restatable}{thm}{thmdichotomy}\label{thm:dichotomy}
	Let $\Gam\in\cC_{rss}$, $(X,\mu)$ be a standard probability space and $N$ be a \II factor. Suppose $\Gam\act X$ is a free ergodic pmp action that is not strongly ergodic and consider $\cM:=(\Linf\cross\Gam)\otb N$. Then $\cR(\cU(\cM)\act\Cartan(\cM))$ is either smooth or not classifiable by countable structures. Moreover, the former holds if and only if $\cR(\cU(N)\act\Cartan(N))$ is smooth and this regardless of whether the action is strongly ergodic or not.
\end{restatable}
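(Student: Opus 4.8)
The plan is to split according to whether $\cR(\cU(N)\act\Cartan(N))$ is smooth, and to use that every smooth equivalence relation is classifiable by countable structures (Definition~\ref{def:smooth}); consequently a relation that is not classifiable by countable structures is automatically non-smooth. It therefore suffices to prove two implications: that smoothness of the relation on $N$ forces smoothness of the relation on $\cM$, and that non-smoothness of the relation on $N$ forces non-classifiability by countable structures of the relation on $\cM$. These two cases exhaust all possibilities and produce exactly the two horns of the dichotomy. For the biconditional in the ``moreover'' I will additionally record that non-smoothness on $N$ always forces non-smoothness on $\cM$, with no appeal to strong ergodicity.

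For the non-smooth case I will first build a Borel reduction of $\cR(\cU(N)\act\Cartan(N))$ into $\cR(\cU(\cM)\act\Cartan(\cM))$ via constant fields. Given $B\in\Cartan(N)$, the constant field $\dint_X B\,d\mu(x)=\Linf\otb B\sub\LinfN$ is a Cartan subalgebra of $\cM$ by the implication $(2)\Rightarrow(1)$ of Theorem~\ref{thm:structure} (which requires no assumption on $\Gam$), and, as noted immediately after Theorem~\ref{thm:structure}, two such subalgebras $\Linf\otb B$ and $\Linf\otb B'$ are unitarily conjugate in $\cM$ exactly when $B$ and $B'$ are unitarily conjugate in $N$. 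Hence $B\mapsto\Linf\otb B$ is a Borel reduction, so smoothness of the relation on $\cM$ would entail smoothness of the relation on $N$; this uses only ergodicity and holds whether or not $\Gam\act X$ is strongly ergodic, which gives the biconditional. Under the standing hypothesis that the action is not strongly ergodic, Theorem~\ref{thm:nonclass} strengthens this conclusion: if the relation on $N$ is not smooth, then the Cartan subalgebras of $\cM$ are not classifiable by countable structures, which is the desired horn.

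For the smooth case I will fix a Borel complete invariant $f\colon\Cartan(N)\to Z$ for unitary conjugacy in $N$, with $Z$ a standard Borel space. By Theorem~\ref{thm:structure} each Cartan subalgebra $A$ of $\cM$ is unitarily conjugate to one of the special form $B=\dint_X B_x\,d\mu(x)$, whose second clause gives that $B_x$ and $B_{gx}$ are unitarily conjugate for a.e.\ $x$, for every $g\in\Gam$. Applying $f$, the map $x\mapsto f(B_x)$ is $\Gam$-invariant modulo null sets, hence essentially constant by ergodicity of $\Gam\act X$; I call its essential value $z_0(A)\in Z$. Using again the remark after Theorem~\ref{thm:structure}, two special-form Cartan subalgebras are unitarily conjugate in $\cM$ iff their fibers are unitarily conjugate in $N$ for a.e.\ $x$, iff $f$ agrees on the fibers a.e., iff, by essential constancy, the corresponding values $z_0$ coincide. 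This shows $z_0$ is independent of the chosen special form and descends to a well-defined complete invariant $A\mapsto z_0(A)$ on $\Cartan(\cM)$ valued in the standard Borel space $Z$, which will witness smoothness once its measurability is established.

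The hard part will be the Borel measurability of $A\mapsto z_0(A)$. Extracting the essential value is unproblematic: the assignment sending a field to the pushforward of $\mu$ under $x\mapsto f(B_x)$ is Borel into the space of probability measures on $Z$, it lands in the Dirac masses by essential constancy, and $\delta_z\mapsto z$ is a Borel isomorphism of the Dirac masses onto $Z$. The genuine difficulty is to choose, measurably in $A$, a unitary conjugating $A$ into a special-form subalgebra $uAu^*\sub\LinfN$ together with its decomposition $(B_x)_x$; this is a measurable refinement of the implication $(1)\Rightarrow(2)$ of Theorem~\ref{thm:structure}, precisely the step that invokes $\Gam\in\cC_{rss}$, and the direct argument does not yield a Borel invariant because the saturation of a Borel transversal under the $\cU(\cM)$-action is only analytic in general. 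I expect to resolve this by a Jankov--von Neumann type uniformization of the set of pairs $(A,u)$ with $uAu^*$ a special-form subalgebra of $\LinfN$, whose $A$-projection is all of $\Cartan(\cM)$ by Theorem~\ref{thm:structure}; combined with the essential-value extraction this will make $A\mapsto z_0(A)$ measurable and complete the smooth horn.
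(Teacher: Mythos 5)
Your proposal is correct and takes essentially the same route as the paper: the non-smooth horn is precisely the appeal to Theorem~\ref{thm:nonclass}, and the smooth horn rests on the same measurable selection of a unitarily conjugate representative inside $\LinfN$ that the paper packages as Steps 2 and 3 of Lemma~\ref{lem:eqrel} (via the selection theorem \cite[Theorem~A.16]{Tak01}, i.e.\ exactly the Jankov--von Neumann-type uniformization you propose, with the same caveat that it is only measurable rather than genuinely Borel), together with the fiberwise characterization of unitary conjugacy. The single divergence is cosmetic: where you invoke ergodicity and the $\Gam$-invariance clause of Theorem~\ref{thm:structure} to collapse $x\mapsto f(B_x)$ to an essential value, the paper simply sends $A=\dint_X A_x\,dx$ to the function $[x\mapsto f(A_x)]$ in the Polish space $L(X,\mu,\R)$ and reduces to equality there, needing no ergodicity at that step.
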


In section~\ref{sec:Aut} we will discuss the equivalence relation of conjugacy by an automorphism on Cartan subalgebras. Using slightly different methods it turns out that for specific choices of $N$, the same construction as in Theorems~\ref{thm:nonclass} and \ref{thm:dichotomy} also gives \II factors for which the Cartan subalgebras up to conjugacy by an automorphism are not classifiable by countable structures.

\begin{restatable}{thm}{thmnonclassaut}\label{thm:nonclassaut}
	Suppose $N$ is the \II factor constructed in \cite[Theorem~2(1)]{SV11}. Let $\Gam\in\cC_{rss}$, $(X,\mu)$ be a standard probability space and $\Gam\act X$ be a free ergodic pmp action that is not strongly ergodic. Then the equivalence relation of Cartan subalgebras of $\cM=(\Linf\cross \Gam)\otb N$ up to conjugacy by an automorphism is not classifiable by countable structures.
\end{restatable}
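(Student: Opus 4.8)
The plan is to exhibit a Borel reduction from $(\Hom(\cR(\Gam\act X),E_0),\sim)$ --- which is not classifiable by countable structures by Theorem~\ref{thm:nonstrongE_01} --- to the relation of conjugacy by an automorphism on $\Cartan(\cM)$. The construction of the reduction mirrors the one used for unitary conjugacy in Theorem~\ref{thm:nonclass}: by Theorem~\ref{thm:structure} every Cartan subalgebra of $\cM$ is unitarily conjugate to a direct integral $\dint_X B_x\,d\mu(x)\sub\LinfN$, so it is enough to map into the direct-integral Cartans. The factor $N$ of \cite[Theorem~2(1)]{SV11} carries a Borel family $\{C_t\}_{t\in\{0,1\}^\N}$ of Cartan subalgebras such that $C_s$ and $C_t$ are unitarily conjugate in $N$ if and only if $sE_0t$, and moreover the same equivalence holds for conjugacy by an automorphism of $N$; this rigidity of the family under $\Aut(N)$, rather than merely under $\cU(N)$, is the feature of $N$ that we exploit.

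Given $\varphi\in\Hom(\cR(\Gam\act X),E_0)$, represented by a Borel map $\varphi:X\to\{0,1\}^\N$ with $\varphi(x)E_0\varphi(gx)$ almost everywhere for each $g\in\Gam$, I set $B^\varphi:=\dint_X C_{\varphi(x)}\,d\mu(x)$. The homomorphism condition makes $C_{\varphi(x)}$ and $C_{\varphi(gx)}$ unitarily conjugate in $N$ almost everywhere, so $B^\varphi$ satisfies condition $(2)$ of Theorem~\ref{thm:structure} and is a Cartan subalgebra of $\cM$ (here only $(2)\Rightarrow(1)$ is used, which does not require $\Gam\in\cC_{rss}$), and $\varphi\mapsto B^\varphi$ is Borel. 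One direction of the reduction is then painless: if $\varphi\sim\psi$ then $C_{\varphi(x)}\sim_{\cU(N)}C_{\psi(x)}$ a.e., whence $B^\varphi$ and $B^\psi$ are already unitarily --- a fortiori automorphically --- conjugate in $\cM$ by the fibrewise description of unitary conjugacy following Theorem~\ref{thm:structure}.

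The substance lies in the converse: an automorphism $\theta\in\Aut(\cM)$ with $\theta(B^\varphi)=B^\psi$ must force $\varphi\sim\psi$, up to the unavoidable symmetries discussed below. The route I would take is to first pin down the ``layered'' structure of $\cM=(\Linf\cross\Gam)\otb N$ inside the Cartan inclusion. Writing $P:=\Linf\cross\Gam$, one has $P=N'\cap\cM$, $N=P'\cap\cM$ and, crucially, $\Linf=B^\varphi\cap P$, so that $\Linf$ --- the algebra over which the direct integral is taken --- is recoverable from $B^\varphi$ once the tensor splitting is located. Using that $P$ has $\Linf$ as its unique Cartan subalgebra up to unitary conjugacy (as $\Gam\in\cC_{rss}$, by \cite{PV11,PV12}) together with the structural analysis behind Theorem~\ref{thm:structure}, I would argue that $\theta$ may be perturbed by an inner automorphism so as to preserve this splitting; it then restricts to an automorphism of $\LinfN=\dint_X N\,d\mu$ preserving the centre $\Linf$, hence is implemented by a self-orbit-equivalence $\Delta\in\Aut(\cR(\Gam\act X))$ of the base together with a measurable field $x\mapsto\alpha_x\in\Aut(N)$ on the fibres. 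The equation $\theta(B^\varphi)=B^\psi$ then reads $\alpha_x\big(C_{\varphi(\Delta^{-1}x)}\big)=C_{\psi(x)}$ almost everywhere, so $C_{\varphi(\Delta^{-1}x)}$ and $C_{\psi(x)}$ are automorphically conjugate in $N$ a.e.; by the rigidity of $\{C_t\}$ under $\Aut(N)$ this gives $\varphi(\Delta^{-1}x)E_0\psi(x)$ a.e.

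I expect the main obstacle to be exactly this control of a general automorphism, which has two facets. First, perturbing $\theta$ to respect the splitting $\cM=P\otb N$ requires identifying the two tensor factors canonically; here the complementary behaviour of $P$ (unique Cartan) and $N$ (many Cartans), hence $P\not\cong N$, must be leveraged through deformation/rigidity so that $\theta$ cannot mix or interchange them. Second, even after this is achieved there remains the residual freedom of the self-orbit-equivalence $\Delta$: writing $\Delta=\Delta_0\delta$ with $\delta\in[\cR(\Gam\act X)]$, the homomorphism property of $\varphi$ absorbs the full-group part (since $\delta^{-1}x$ is $\cR(\Gam\act X)$-equivalent to $x$, so $\varphi\circ\delta^{-1}\sim\varphi$), but the class of $\Delta$ in $\Out(\cR(\Gam\act X))$ survives. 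Consequently the reduction identifies $(\Cartan(\cM),\text{Aut-conjugacy})$, on its image, with $(\Hom(\cR(\Gam\act X),E_0),\sim)$ further quotiented by the natural action of $\Out(\cR(\Gam\act X))$, and the remaining task is to show that this coarsening does not restore classifiability. This I would establish by revisiting the proof of Theorem~\ref{thm:nonstrongE_01}: its non-classifiability is witnessed by a turbulent (or otherwise generically ergodic) Polish-group action on the homomorphism space, and I would check that enlarging the acting group by the relation automorphisms $\Out(\cR(\Gam\act X))$ preserves the relevant genericity, so that the quotient relation remains non-classifiable by countable structures.
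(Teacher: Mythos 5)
Your overall skeleton (send $\varphi\in\Hom(\cR(\Gam\act X),E_0)$ to the direct-integral Cartan $B^\varphi=\dint_X C_{\varphi(x)}\,d\mu(x)$ built from the Speelman--Vaes family, then analyze what an automorphism of $\cM$ can do to such subalgebras) is the same as the paper's, and your forward direction is fine. But both obstacles you flag are left unresolved, and they are exactly the substance of the paper's proof. First, your claim that $\theta$ can be perturbed by an inner automorphism so as to preserve the splitting $\cM=P\otb N$ globally, hence restricts to $\LinfN$ as a base transformation $\Delta\in\Aut(\cR(\Gam\act X))$ together with a measurable field $x\mapsto\alpha_x\in\Aut(N)$, is not established, and it is not what the paper proves; the remark closing Section~\ref{sec:Aut} explains that the $\cC_{rss}$ dichotomy only yields that either $\al(L^\infty(X)\cross\Gam)\emb_\cM\LinfN$ or $\al(N)\emb_\cM\LinfN$, and the first alternative cannot currently be excluded. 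What the paper actually proves (Lemmas~\ref{lem:LinfembalLinf} and~\ref{lem:q}, which use the irreducible regular amenable subfactor $L^\infty(K^G)\cross\Lam\sub N$ --- a feature of the SV11 factor beyond its $E_0$-rigid family of Cartans) is much weaker: there is a nonzero projection $q\in B$ with $\al(\Linf)q=\Linf q$, and disintegrating $\al$ over this corner (Theorem~\ref{thm:Tak-Desint}) produces only a nonsingular isomorphism $\Phi:Y_2\rarrow Y_1$ between \emph{positive measure subsets} of $X$ --- not an orbit equivalence, it need not respect $\cR(\Gam\act X)$ at all --- along which the fibres are conjugate by \emph{stable} automorphisms, $A_x\sim_{sa}B_{\Phi^{-1}x}$. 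Consequently the rigidity you must invoke for the family $\{C_t\}$ is $sE_0t\iff C_s\sim_{sa}C_t$ (which \cite[Theorem~2(1)]{SV11} does supply), not merely rigidity under $\Aut(N)$ as in your write-up.

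Second, your plan for the residual base freedom --- quotient $(\Hom(\cR(\Gam\act X),E_0),\sim)$ by $\Out(\cR(\Gam\act X))$ and ``check that genericity is preserved'' --- is a genuine hole, and there is concrete evidence it cannot be patched for the witnessing family of Theorem~\ref{thm:nonstrongE0}: the distinguishing quantity there is $\mu(A^{\bf s}_n\Del A^{\bf t}_n)$, which is destroyed by base transformations; indeed on the target $\prod_\N S^1$ the coordinatewise rotation by $(t_n-s_n)_n$ commutes with the $\oplus_\N\Z$-action and carries $B^{\bf s}_n$ onto $B^{\bf t}_n$ for \emph{every} $n$, collapsing the whole single-sequence family. (Note also that the freedom you must quotient by is not just $\Out(\cR(\Gam\act X))$ but arbitrary nonsingular partial isomorphisms of $X$, by the first paragraph.) This is precisely why Step~2 of the proof of Theorem~\ref{thm:nonclassaut2} replaces the single-sequence construction by the \emph{interleaved} family $f({\bf s},{\bf t})$ indexed by pairs, with $D^{\bf s,t}_{2n-1}$ built from $s_n$ and $D^{\bf s,t}_{2n}$ from $t_n$: any $\Phi$ produced by Step~1 must match $A_n\cap Y_1$ with $\Phi(B_n\cap Y_2)$ for the \emph{same} index $n$ (for all large $n$, after shrinking $Y_1$), so the within-pair comparison $\abs{s_n-t_n}$ versus $\abs{v_n-w_n}$ is an invariant no $\Phi$ can distort; choosing $s_n,t_n,v_n\rarrow 0$ and $w_n\rarrow\frac{1}{2}$ then gives the contradiction. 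Without this doubling idea (or a substitute), your argument fails at exactly the point where non-classifiability must be certified. A final structural remark: the paper never needs a Borel \emph{reduction} of $(\Hom(\cR(\Gam\act X),E_0),\sim)$ into the automorphism-conjugacy relation; it verifies the weaker criterion of Proposition~\ref{prop:crit} directly, which suffices and avoids having to control the full preimage structure of the map $({\bf s},{\bf t})\mapsto f({\bf s},{\bf t})$.
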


The proof of Theorem~\ref{thm:nonclassaut} relies more on the structure of $N$ than the proof of Theorem~\ref{thm:nonclass}. We will formulate the exact requirements on $N$ in Theorem~\ref{thm:nonclassaut2}. In particular, the proof of this result will allow us to get the above specific examples of \II factors for which the Cartan subalgebras up to conjugacy by an automorphism are not classifiable by countable structures, but it will not allow us to get (in this way) a general dichotomy result as in Theorem~\ref{thm:dichotomy}.

\vspace{5pt}

Note that all our results we mentioned so far concern non-hyperfinite \II factors. As we already mentioned in the beginning of the introduction, the hyperfinite \II factor $R$ has a unique Cartan subalgebra up to conjugacy by an automorphism. On the other hand, J. Packer constructed in \cite[Theorem~4.4]{Pa85} an uncountable family of Cartan subalgebras of $R$ no two of which are unitarily conjugate. Combining her results with some turbulence results for cocycles from \cite{Kec10}, we will prove the following stronger statement in section~\ref{sec:R}.

\begin{restatable}{thm}{thmR}\label{thm:R}
	Cartan subalgebras of the hyperfinite \II factor $R$ up to unitary conjugacy are not classifiable by countable structures.
\end{restatable}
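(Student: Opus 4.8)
The plan is to exhibit a Borel reduction from a turbulent equivalence relation into $\cR(\cU(R)\act\Cartan(R))$, and then to invoke the two standard pillars of the theory: Hjorth's theorem that the orbit equivalence relation of a turbulent Polish group action is never classifiable by countable structures, together with the fact that non-classifiability by countable structures passes upward along Borel reductions (if $E\leq_B F$ and $E$ is not classifiable by countable structures, then neither is $F$). Thus it suffices to produce a turbulent relation $E$ on a Polish space $Y$ and a Borel map $\Phi\colon Y\to\Cartan(R)$ such that $x\mathrel{E}y$ if and only if $\Phi(x)$ and $\Phi(y)$ are unitarily conjugate in $R$.

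The source of the reduction will be a space of cocycles. First I would realize $R$ as the von Neumann algebra $L(\cR_0)$ of the (unique, by \cite{CFW81}) hyperfinite ergodic \II equivalence relation $\cR_0$ on a standard probability space $(X,\mu)$, with its standard Cartan subalgebra $A_0=\Linf$. Following Packer \cite{Pa85}, to each cocycle $c$ in the Polish space $Z^1(\cR_0,\bT)$ of $\bT$-valued $1$-cocycles one associates a Cartan subalgebra $A_c\sub R$ by an explicit deformation of $A_0$, in such a way that the assignment $c\mapsto A_c$ is Borel into the standard Borel structure on $\Cartan(R)$. The heart of Packer's analysis, which I would adapt here, is that $A_c$ and $A_{c'}$ are unitarily conjugate in $R$ if and only if $c$ and $c'$ are cohomologous. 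This is precisely what turns $c\mapsto A_c$ into a Borel reduction from the cohomology relation $E_{\mathrm{coh}}$ on $Z^1(\cR_0,\bT)$ to $\cR(\cU(R)\act\Cartan(R))$, upgrading Packer's uncountable family of pairwise non-conjugate Cartans into a statement about the entire Borel complexity.

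It then remains to feed in the turbulence. Since $\cR_0$ is amenable and ergodic, hence not strongly ergodic, it falls into exactly the regime in which the cohomology of $\bT$-valued cocycles is turbulent, which is the content of the turbulence results for cocycles in \cite{Kec10}. Hence $E_{\mathrm{coh}}$ is turbulent, so by Hjorth's theorem it is not classifiable by countable structures, and the reduction above transports this conclusion to $\Cartan(R)$, finishing the proof. The main obstacle is the verification that $c\mapsto A_c$ is a genuine reduction: the forward implication (cohomologous $\Rightarrow$ unitarily conjugate) is a direct computation, but the converse — that a unitary conjugacy $uA_cu^*=A_{c'}$ forces $c$ and $c'$ to be cohomologous — requires combining the Feldman--Moore rigidity of the Cartan pair with the precise bookkeeping in Packer's construction, while simultaneously ensuring that every map in sight is Borel so that the reduction lives in the descriptive set-theoretic category. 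A secondary point to get right is matching the exact genericity and ergodicity hypotheses of the cocycle turbulence theorem in \cite{Kec10} to the hyperfinite relation $\cR_0$, so that turbulence is genuinely available for this particular model.
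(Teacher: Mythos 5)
There is a genuine gap, and it sits exactly at the step you flag as ``the heart'' of the argument: the reduction $c\mapsto A_c$ you describe does not exist in the form you need. A $1$-cocycle $c\in Z^1(\cR_0,\bT)$ on the equivalence relation does give an automorphism of $R=L(\cR_0)$ (multiply the Feldman--Moore matrix coefficients $a(x,y)$ by $c(x,y)$), but since $c(x,x)=1$ almost everywhere this automorphism fixes the canonical Cartan subalgebra $A_0=L^\infty(X)$ pointwise; so ``deforming $A_0$'' along such cocycles returns $A_0$ itself, and you obtain no new Cartan subalgebras at all. Packer's construction is essentially different: she realizes $R\cong L^\infty(Y)\cross\Gam$ for a free ergodic \emph{compact} action of the abelian group $\Gam=\oplus_{i\in\N}\Z_2$ on $Y=\Pi_{i\in\N}\Z_2$, uses compactness to obtain a \emph{second} Cartan subalgebra $L\Gam$ (her Corollary~2.6), takes cocycles $c:\Gam\times Y\rarrow S^1$ for the \emph{group action}, and applies the associated automorphisms $A_c$ to $L\Gam$, not to $L^\infty(Y)$. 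Without this second Cartan subalgebra --- whose existence is the entire reason for choosing a compact model --- there is nothing to deform.

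There is a second, independent gap: even with the correct construction, the conjugacy criterion is not ``unitarily conjugate iff cohomologous''. Packer's Theorem~3.8 gives that $A_c(L\Gam)\sim_u A_d(L\Gam)$ if and only if $c^{-1}d$ is cohomologous to a \emph{character} cocycle $\gam\in\hat{\Gam}$. Hence the relation you actually reduce into $\Cartan(R)$ is not the cohomology relation but the coarser orbit equivalence relation of the larger group $\hat{\Gam}\times L(Y,\nu,S^1)$ acting on $Z^1(\Gam\act Y,S^1)$ by $(\gam,f)\cdot c(g,y)=\gam(g)f(gy)c(g,y)f(y)^{-1}$. Turbulence of the $L(Y,\nu,S^1)$-action alone (Kechris, Corollary~27.4, applied on $\overline{B^1}=Z^1$) does not automatically transfer to this larger action: density and the local-orbit condition are inherited when the group is enlarged, but meagerness of orbits could a priori fail. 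The paper closes precisely this hole by observing that the larger action is by isometries, has a dense orbit, and is not transitive, so that all orbits are meager by Kechris' results on isometric actions (Corollary~22.2 and Proposition~23.5). Your proposal would need both the corrected construction and this additional meagerness argument before Hjorth's theorem can be invoked.
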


It is quite straightforward to show that if $M$ is a \II factor with a Cartan subalgebra $A$ and $N$ is any \II factor, then classifying the Cartan subalgebras of $M\otb N$ is at least as complicated as classifying the Cartan subalgebras of $N$ (by considering Cartan subalgebras of the form $A\otb B$, with $B\sub N$ Cartan, see Proposition~\ref{prop:MotbN} for the precise statement). Together with Theorem~\ref{thm:R} this then immediately implies the following result on McDuff \II factors.

\begin{cor}\label{cor:McDuff}
	Let $M$ be a McDuff \II factor with at least one Cartan subalgebra. Then the Cartan subalgebras of $M$ up to unitary conjugacy are not classifiable by countable structures.
\end{cor}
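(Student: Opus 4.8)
The plan is to reduce Corollary~\ref{cor:McDuff} to Theorem~\ref{thm:R} using the tensor product reduction alluded to just before the statement. First I would invoke the McDuff property: by definition $M$ is McDuff if $M\cong M\otb R$, so I can write $M\cong M\otb R$. Now $M$ is assumed to have at least one Cartan subalgebra $A\sub M$, and $R$ has (many) Cartan subalgebras $B\sub R$. The key observation is that if $A\sub M$ and $B\sub R$ are Cartan, then $A\otb B$ is a Cartan subalgebra of $M\otb R$: it is maximal abelian (a tensor product of masas is a masa), and its normalizer contains $\cN_M(A)\otb\cN_R(B)$, whose generated von Neumann algebra is all of $M\otb R$ since $\cN_M(A)''=M$ and $\cN_R(B)''=R$. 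This gives the map $B\mapsto A\otb B$ from $\Cartan(R)$ into $\Cartan(M)$ (after identifying $M\cong M\otb R$).

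The crux is that this assignment is a Borel reduction from $\cR(\cU(R)\act\Cartan(R))$ to $\cR(\cU(M)\act\Cartan(M))$, which is precisely the content of Proposition~\ref{prop:MotbN} cited in the text. Thus the main step is to verify that $A\otb B$ and $A\otb B'$ are unitarily conjugate in $M\otb R$ if and only if $B$ and $B'$ are unitarily conjugate in $R$. The forward implication (unitary conjugacy in $R$ yields conjugacy by $\een\ot u$ in $M\otb R$) is immediate. The reverse implication is the substantive part: one must show that a unitary $w\in M\otb R$ with $w(A\otb B)w^*=A\otb B'$ forces $B$ and $B'$ to be unitarily conjugate inside $R$. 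Here I would use that the relative commutant and the fixed common factor $A\otb\C$ pin down the second tensor leg; concretely, since $A$ is fixed on both sides, one can hope to conjugate $w$ into the commutant of $A\otb\C$ and thereby descend to a unitary in $R$ conjugating $B$ to $B'$. This Borel-reduction/intertwining argument is exactly what I expect to be the main obstacle, and it is the statement that I am assuming is already established in Proposition~\ref{prop:MotbN}.

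Granting Proposition~\ref{prop:MotbN}, the conclusion is then purely formal. By Theorem~\ref{thm:R}, $\cR(\cU(R)\act\Cartan(R))$ is not classifiable by countable structures. Since non-classifiability by countable structures is preserved upwards under Borel reductions, and we have a Borel reduction of $\cR(\cU(R)\act\Cartan(R))$ into $\cR(\cU(M)\act\Cartan(M))$, it follows immediately that $\cR(\cU(M)\act\Cartan(M))$ is not classifiable by countable structures either. This completes the proof of Corollary~\ref{cor:McDuff}.
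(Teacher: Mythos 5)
Your proposal is correct and follows exactly the paper's own route: the paper likewise proves Corollary~\ref{cor:McDuff} by combining the McDuff isomorphism $M\cong M\otb R$ with Proposition~\ref{prop:MotbN} (applied with $N=R$) and Theorem~\ref{thm:R}, using that non-classifiability by countable structures passes upward along Borel reductions. Your treatment of Proposition~\ref{prop:MotbN} as a black box is consistent with the paper, whose proof of the corollary cites it in the same way (the paper's proof of that proposition, for the record, handles the substantive reverse implication via Popa's intertwining criterion rather than by conjugating into the commutant of $A\otb\C$).
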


\subsection*{Idea behind the proofs}

Let us sketch the proofs of the main structural result (Theorem~\ref{thm:structure}) and of our non-classifiability result for homomorphisms to $E_0$ (Theorem~\ref{thm:nonstrongE_01}), and then indicate briefly how the non-classifiability results for Cartan subalgebras (Theorems~\ref{thm:nonclass} and \ref{thm:nonclassaut}) follow from these.

\vspace{5pt}

\textit{Theorem~\ref{thm:structure}.} Recall our setup for Theorem~\ref{thm:structure}, where $\Gam\in\cC_{rss}$, $N$ is a \II factor, $(X,\mu)$ a standard probability space and $\Gam\act X$ a free ergodic pmp action. We consider $\cM:=(\Linf\cross\Gam)\otb N$. The proof of Theorem~\ref{thm:structure} can be subdivided into the following two steps.

\begin{enumerate}
	\item \textit{Describe the Cartan subalgebras of $\cM\cong (\LinfN)\cross \Gam$ contained in $\LinfN$.}\\
	Note that by step~2 it indeed suffices to characterize these Cartan subalgebras. Also note that we don't need the condition $\Gam\in\cC_{rss}$ until step~2, before which all results hold for any countable group $\Gam$.
	
	Writing $\LinfN=\dint_X N\,d\mu(x)$, we will see in Lemmas~\ref{lem:maxab} and \ref{lem:Ax} that it is quite straightforward from the basic properties of direct integrals that a Cartan subalgebra $A$ contained in $\LinfN$ can be written as a direct integral
	\begin{equation}\label{eq:Aasint}
	A=\dint_X A_x\,d\mu(x)
	\end{equation}
	where each $A_x$ is a Cartan subalgebra of $N$. Moreover, one observes that the canonical unitaries $u_g$ associated to the group elements of $\Gam$ act on $\LinfN=\dint_X N\,d\mu(x)$ by shifting the integral components. Involving Popa's intertwining technique, this will imply the following result of independent interest, see also Lemma~\ref{lem:Cartaniff}.
	\begin{lemma}
		A subalgebra $A$ of $\LinfN$ of the form \eqref{eq:Aasint} is a Cartan subalgebra of $(\Linf\cross\Gam)\otb N$ if and only if the integral components of $A$ within the same $\Gam$-orbit are unitarily conjugate.
	\end{lemma}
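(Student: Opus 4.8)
The plan is to reduce everything to an analysis of the Fourier coefficients of the normalizer with respect to the crossed product decomposition $\cM=(\LinfN)\cross\Gam$, exploiting that conjugation by $u_g$ shifts the integral components. Throughout I write $A=\dint_X A_x\,d\mu(x)$ with each $A_x$ Cartan in $N$, and I record the computation underlying everything: for $a=\dint_X a_x\,d\mu(x)\in\LinfN$ one has $u_g a u_g^*=\dint_X a_{g\inv x}\,d\mu(x)$.

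First I would dispose of maximal abelianness, which in fact needs only freeness together with the inclusion $\Linf\sub A$. If $b=\sum_g b_g u_g\in A'\cap\cM$ (with $b_g\in\LinfN$), then comparing $g$-th Fourier coefficients of $ab=ba$ yields $a_x(b_g)_x=(b_g)_x a_{g\inv x}$ for every $a\in A$ and a.e. $x$. For $g=e$ this gives $(b_e)_x\in A_x'\cap N=A_x$, so $b_e\in A$. For $g\ne e$, testing against the scalars $a=\mathbf 1_E\in\Linf\sub A$ gives $(\mathbf 1_E(x)-\mathbf 1_E(g\inv x))(b_g)_x=0$; since freeness makes $g\inv x\ne x$ a.e. and a countable separating family of sets $E$ separates $x$ from $g\inv x$, we get $(b_g)_x=0$ a.e., hence $b_g=0$. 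Thus $A'\cap\cM=A$, regardless of the orbit condition.

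The substance is therefore to show that the normalizer generates $\cM$ exactly when the orbit condition holds. Set $C_g:=\{x:A_x\cong A_{gx}\ \text{in } N\}$, so the orbit condition reads ``$C_g$ is conull for all $g$''. For the easy implication, assuming each $C_g$ conull, a measurable selection argument produces, for each $g$, a decomposable unitary $v_g=\dint_X (v_g)_x\,d\mu(x)$ with $(v_g)_x A_{g\inv x}(v_g)_x^*=A_x$ a.e.; then $v_g u_g$ normalizes $A$. Since each $A_x$ is Cartan in $N$ we also have $\cN_{\LinfN}(A)''=\LinfN$ from the direct integral theory, so $\cN_\cM(A)''$ contains $\LinfN$ and every $u_g=v_g^*(v_gu_g)$, hence all of $\cM$; together with maximal abelianness this makes $A$ Cartan. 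For the converse I would run a support argument on Fourier coefficients. Given a normalizing unitary $w=\sum_h w_hu_h$ and writing $\pi=\Ad(w)|_A$, comparison of $h$-th coefficients of $wa=\pi(a)w$ (noting $\pi(a)\in A\sub\LinfN$) gives
\[
(\pi(a))_x\,(w_h)_x=(w_h)_x\,a_{h\inv x}\qquad(a\in A,\ \text{a.e. }x),
\]
so wherever $(w_h)_x\ne 0$ the partial isometry $(w_h)_x$ intertwines the two Cartan subalgebras $A_{h\inv x}$ and $A_x$ of $N$.

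Here is where Popa's intertwining enters and where the main work lies: a nonzero intertwining partial isometry witnesses $A_{h\inv x}\emb_N A_x$, and because both are Cartan this upgrades to genuine unitary conjugacy, so $(w_h)_x\ne 0$ forces $x\in hC_h$. Consequently every normalizing unitary, and hence (since the normalizers form a $*$-closed group whose linear span is $\|\cdot\|_2$-dense in $\cN_\cM(A)''$) every element of $\cN_\cM(A)''$, has its $h$-th Fourier coefficient supported in $hC_h$. If $A$ is Cartan then $u_g\in\cN_\cM(A)''$ for all $g$; but the $g$-th coefficient of $u_g$ is the identity of $\LinfN$, which is supported on all of $X$, forcing $gC_g=X$ and hence $C_g$ conull. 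This yields the orbit condition and closes the equivalence. I expect the genuinely delicate points to be (i) the measurability of $x\mapsto A_x$ up to conjugacy (so that $C_g$ is measurable) and the measurable selection of the conjugating unitaries $(v_g)_x$, and (ii) the fiberwise upgrade from a corner intertwiner to an honest unitary conjugacy of the Cartan subalgebras $A_{h\inv x}$ and $A_x$, carried out measurably in $x$; the remainder is bookkeeping with Fourier coefficients and the shift action of the $u_g$.
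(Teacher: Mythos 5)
Your argument is correct, and its core is the same as the paper's proof of Lemma~\ref{lem:Cartaniff}: the shift action of the $u_g$ on integral components, Fourier-coefficient analysis of normalizing elements, Popa's intertwining criterion (Theorem~\ref{thm:Popa}), and the upgrade from $A_{g\inv x}\emb_N A_x$ to unitary conjugacy via \cite[Theorem~A.1]{Po01}. Two differences are worth recording. First, in the direction (orbit condition $\Rightarrow$ Cartan) you build the normalizing unitaries $v_gu_g$ by fiberwise measurable selection of conjugating unitaries; the paper instead obtains a single unitary $u(g)\in\cU(\LinfN)$ from its Lemma~\ref{lem:meas}, whose proof runs Popa's criterion globally in $\LinfN$ and then patches partial isometries by a maximality argument using \cite[Theorem~C.3]{Va06} --- so it trades your selection theorem for intertwining theory. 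Your route is viable (the conjugation action of $\cU(N)$ on $\Cartan(N)$ is Borel, and the paper itself invokes the selection theorem \cite[Theorem~A.16]{Tak01} elsewhere), but the measurability bookkeeping you flag in point (i) is exactly what Lemma~\ref{lem:meas} is designed to bypass. Second, your packaging of the hard direction --- the elements of $\cM$ whose $h$-th Fourier coefficient is supported in $hC_h$ form a $\norm{\cdot}_2$-closed subspace, which by Kaplansky density contains $\cN_\cM(A)''$ and hence $u_g$ --- is a cleaner, equivalent version of the paper's explicit approximation of $u_g$ in $\norm{\cdot}_2$ by linear combinations of normalizing unitaries; both rest on the same fiberwise polar-decomposition computation. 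Finally, the one step you attribute to \textquotedblleft direct integral theory", namely $\cN_{\LinfN}(A)''=\LinfN$, is not an off-the-shelf fact: the paper proves it as a Claim inside Lemma~\ref{lem:Cartaniff}, again by a measurable selection argument, so a complete write-up of your proof must supply that argument (it is of the same nature as your delicate point (i)).
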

	This in particular generalizes a result from Feldman and Moore (\cite[Theorem~II.10]{FM77}) saying that two Cartan subalgebras $A_1,A_2$ of a \II factor $N$ are unitarily conjugate if and only if $\diag(A_1,A_2)$ is a Cartan subalgebra of $M_2(N)$. Indeed, one can view $M_2(N)$ as $(L^\infty(\{0,1\})\cross \Z/2\Z)\otb N$.
	\item \textit{Show that every Cartan subalgebra of $\cM$ is unitarily conjugate to one contained in $\LinfN$.}\\
	This is were the assumption $\Gam\in\cC_{rss}$ comes into play. Using the dichotomy~\ref{def:rss} for such groups, it follows easily that, given any Cartan subalgebra $A\sub\cM$, we must have $A\emb_\cM \LinfN$ (see Theorem~\ref{thm:Popa}). Carefully exploiting the structure of $\cM$, we will see in Lemma~\ref{lem:AuB} that this implies the existence of a Cartan subalgebra $B$ of $\cM$ contained in $\LinfN$ such that $A\emb_\cM B$, which by \cite[Theorem~A.1]{Po01} will allow us to finish the proof of Theorem~\ref{thm:structure}.	
\end{enumerate}

\textit{Theorem~\ref{thm:nonstrongE_01}.} For the proof of Theorem~\ref{thm:nonstrongE_01} we start with any countable group $\Gam$, a standard probability space $(X,\mu)$ and any free ergodic pmp action $\Gam\act X$ that is not strongly ergodic. In order to construct homomorphisms from $\cR(\Gam\act X)$ to $E_0$, the non-strong ergodicity of $\Gam\act X$ will come into play through the use of almost invariant sequences (see Section~\ref{sec:aiseq}). Using the fundamental results of \cite{Dye59,JS87} we will construct a lot of nontrivial almost invariant sequences for the action $\Gam\act X$, namely one for every element ${\bf t}\in (0,1)^\N$. Following \cite{JS87} we can associate a map $X\rarrow \{0,1\}^\N$ to every almost invariant sequence $(A_n)_n$ via
\[
X\rarrow \{0,1\}^\N: x\mapsto (\een_{A_n}(x))_n.
\]
This construction will associate to every element of $(0,1)^\N$ a homomorphism from $\cR(\Gam\act X)$ to $E_0$. Theorem~\ref{thm:nonstrongE_01} will now follow from Hjorth's theory of turbulence (see section~\ref{sec:turb} and \cite{Hjo00}). More specifically we will see in Proposition~\ref{prop:crit} that the existence of a \textquotedblleft nontrivial" homomorphism from $(0,1)^\N$ up to $\ell^1$-equivalence to an equivalence relation $E$ implies that $E$ is not classifiable by countable structures. It turns out that this applies to our construction above, allowing us to finish the proof of Theorem~\ref{thm:nonstrongE_01}.

\vspace{5pt}

\textit{Theorem~\ref{thm:nonclass}.} The proof of Theorem~\ref{thm:nonclass} will rely on the two aforementioned results in the following way.
\begin{itemize}
	\item \textit{Use Theorem~\ref{thm:structure} to reduce the problem to studying $\Hom(\cR(\Gam\act X),\cR(\cU(N)\act\Cartan(N)))$.}\\
	We will see in Lemma~\ref{lem:eqrel} that the proof of Theorem~\ref{thm:structure} implies that for getting the desired non-classifiability result on the equivalence relation of unitary conjugacy on $\Cartan(\cM)$ it is \textquotedblleft enough" to study homomorphisms between $\cR(\Gam\act X)$ and $\cR(\cU(N)\act\Cartan(N))$ up to unitary conjugacy inside $N$ almost everywhere. 
	\item \textit{Pick a \II factor $N$ which already has \textquotedblleft a lot" of Cartan subalgebras and use Theorem~\ref{thm:nonstrongE_01} to get enough such homomorphisms from them.}\\
	The standing assumption in the statement of Theorem~\ref{thm:nonclass} is that $\cR(\cU(N)\act\Cartan(N))$ is not smooth, i.e. we cannot assign real numbers as complete invariants for this equivalence relation. It is known (see \cite[Theorem~1.1]{HKL90}) that this is equivalent to having a Borel reduction of $E_0$ to $\cR(\cU(N)\act\Cartan(N))$. Intuitively this means we can find a copy of $E_0$ inside $\cR(\cU(N)\act\Cartan(N))$. This will allow us to transfer the result in Theorem~\ref{thm:nonstrongE_01} to homomorphisms between $\cR(\Gam\act X)$ and $\cR(\cU(N)\act\Cartan(N))$, finishing the proof of Theorem~\ref{thm:nonclass}.
\end{itemize}

\textit{Theorem~\ref{thm:nonclassaut}.} The proof of Theorem~\ref{thm:nonclassaut} is similar to the proof of Theorem~\ref{thm:nonclass}, albeit slightly more technical. One of the differences is the first bullet point above. The reduction there works for Theorem~\ref{thm:nonclass} because of the fact that two Cartan subalgebras of $\cM$ contained in $\LinfN$ are unitarily conjugate in $\cM$ if and only if the \textquotedblleft slices" from their direct integral decompositions are unitarily conjugate in $N$. This does not hold for conjugation by an automorphism. Nevertheless it turns out that for specific choices of $N$, up to applying a partial automorphism of $X$, $A$ being conjugate by an automorphism to $B$ does imply that their slices are conjugate by a stable automorphism of $N$ on a positive measure subset of $X$. Using this together with the \II factor $N$ from \cite{SV11} and an argument similar to that of the proof of Theorem~\ref{thm:nonstrongE_01} will give us the desired result.

\vspace{10pt}

\textsc{Acknowledgments.} I am very grateful to my advisor Adrian Ioana for suggesting this problem and for the many helpful meetings offering me advice throughout the project. I would also like to thank Stefaan Vaes, R\'emi Boutonnet, and the referee for their valuable comments which helped improve the exposition of the paper and generalize some of its results.

\section{Preliminaries}

In order to make this article as self-contained as possible, we include a rather extensive section of preliminaries, including the necessary ingredients from both von Neumann algebras and descriptive set theory.

\subsection{von Neumann algebras}

In this paper we consider separable tracial von Neumann algebras $(M,\tau)$, i.e. von Neumann algebras with a faithful normal tracial state $\tau:M\rarrow\C$. We denote by $\cU(M)$ the unitary group of $M$, by $\cZ(M):=M'\cap M$ the center of $M$, and by $L^2(M)$ the completion of $M$ under the Hilbert norm $\norm{x}_2=\tau(x^*x)^{\frac{1}{2}}$. We note the well-known fact that this 2-norm turns $\cU(M)$ into a Polish group. For a projection $p\in M$, we denote by $z(p)$ its central support, i.e. the smallest projection $z\in\cZ(M)$ such that $zp=p$. For a von Neumann subalgebra $P\sub M$ we denote by $E_P:M\rarrow P$ the conditional expectation from $M$ onto $P$, by $e_P:L^2(M)\rarrow L^2(P)$ the orthogonal projection onto $L^2(P)$ and by $\cN_M(P):= \{u\in\cU(M)\mid uPu^*=P\}$ the normalizer of $P$ in $M$. \emph{Jones' basic construction} of the inclusion $P\sub M$ is the von Neumann subalgebra of $B(L^2(M))$ generated by $M$ and $e_P$, and is denoted by $\ip{M}{e_P}$. We will write $\Aut(M)$ for the group of (trace preserving) automorphisms of $(M,\tau)$. On $\Aut(M)$ we put the topology for which a net $(\al_i)$ converges to $\al$ if and only if for all $x\in M$, we have $\lim_i\norm{\al_i(x)-\al(x)}_2= 0$. One can show that this turns $\Aut(M)$ into a Polish group.

Usually we will work with \II factors, i.e. infinite dimensional tracial von Neumann algebras with trivial center $\cZ(M)=\C 1$. A \emph{Cartan subalgebra} of a \II factor $M$ is a maximal abelian von Neumann subalgebra that is \emph{regular}, i.e. $\cN_M(A)''=M$. Given a \II factor $M$, we write $\Cartan(M)$ for the space of Cartan subalgebras of $M$. We say that two Cartan subalgebras $A$ and $B$ are \emph{unitarily conjugate in $M$} if there exists $u\in \cU(M)$ such that $uAu^*=B$, we say they are \emph{conjugate by an automorphism of $M$} if there exists $\al\in\Aut(M)$ such that $\al(A)=B$ and we say they are \emph{conjugate by a stable automorphism of $M$} if there exist nonzero projections $p\in A$, $q\in B$ and a $*$-isomorphism $\al:pMp\rarrow qMq$ such that $\al(Ap)=Bq$. We will write $A\sim_u B$, $A\sim_a B$, and $A\sim_{sa} B$ respectively for these notions of equivalence.

\subsubsection{Intertwining-by-bimodules}

In \cite{Po03} Popa proved the following powerful technique for conjugating (corners of) subalgebras of a tracial von Neumann algebra.

\begin{theorem}[{\cite[Theorem~2.1 and Corollary~2.3]{Po03}}]\label{thm:Popa}
	Let $(M,\tau)$ be a tracial von Neumann algebra and $P\sub pMp$, $Q\sub qMq$ be von Neumann subalgebras. Let $\cU\sub\cU(P)$ be a subgroup such that $\cU''=P$. Then the following are equivalent.
	\begin{itemize}
		\item There exist projections $p_0\in P$, $q_0\in Q$, a $^*$-homomorphism $\psi:p_0Pp_0\rarrow q_0Qq_0$ and a nonzero partial isometry $v\in q_0Mp_0$ such that $\psi(x)v=vx$ for all $x\in p_0Pp_0$.
		\item There is no sequence $(u_n)_n\in\cU$ such that $\norm{E_Q(x^*u_ny)}_2\rarrow 0$ for all $x,y\in pMq$.
	\end{itemize}
\end{theorem}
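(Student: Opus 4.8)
The plan is to prove the equivalence by passing to the Jones basic construction $\ip{M}{e_Q}$ of the inclusion $Q\sub M$, which carries a canonical semifinite trace $\mathrm{Tr}$ determined by $\mathrm{Tr}(xe_Qy)=\tau(xy)$ for $x,y\in M$; here the left multiplication action realizes $P\sub pMp$ inside $\ip{M}{e_Q}$, and the right $Q$-action is implemented by $JQJ=\ip{M}{e_Q}'$. Reducing by $p$ and $q$ so that the essential content is the case $p=q=1$ (the corners being handled identically after the appropriate cut-downs), I would introduce the intermediate condition (3): \emph{there exists a nonzero projection $f\in P'\cap\ip{M}{e_Q}$ with $\mathrm{Tr}(f)<\infty$}, equivalently a nonzero $P$-$Q$-subbimodule $\cH\sub L^2(M)$ of finite right $Q$-dimension (the projection onto such an $\cH$ automatically lies in $(JQJ)'=\ip{M}{e_Q}$ and commutes with $P$). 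The whole argument then reduces to establishing the two equivalences $(1)\iff(3)$ and $(2)\iff(3)$.

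For $(1)\iff(3)$ I would use the standard dictionary between intertwiners and finite bimodules. Given the intertwiner data $v\in q_0Mp_0$, $\psi:p_0Pp_0\rarrow q_0Qq_0$ with $\psi(x)v=vx$, taking adjoints gives $v^*\psi(z)=zv^*$ for $z\in p_0Pp_0$, so $\cH:=\overline{v^*Q}^{\,\norm{\cdot}_2}$ is a $p_0Pp_0$-$Q$-subbimodule generated as a right $Q$-module by the single bounded vector $v^*$, hence of finite right $Q$-dimension; this yields a finite-trace projection in $P'\cap\ip{M}{e_Q}$ after the routine amplification back to $P$. Conversely, from a finite bimodule $\cH$ one decomposes the right $Q$-module and extracts a nonzero partial isometry together with the homomorphism $\psi$, recovering the intertwining relation. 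This step is standard once the finite-trace projection is in hand.

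The implication $(3)\Rightarrow(2)$ is the easy half of the real content and I would argue it through a Pimsner–Popa basis. Fix a right $Q$-basis $\eta_1,\dots,\eta_k$ of the finite module $\cH$ and a nonzero bounded vector $\xi\in\cH$. Since $\cH$ is a left $P$-module and each $u_n$ is unitary, $u_n\xi\in\cH$ and $\norm{u_n\xi}_2=\norm{\xi}_2$ is constant in $n$. Expanding $u_n\xi=\sum_j\eta_j E_Q(\eta_j^*u_n\xi)$, a decaying sequence would force $\norm{E_Q(\eta_j^*u_n\xi)}_2\rarrow0$ for each $j$, whence $\norm{u_n\xi}_2\le\sum_j\norm{\eta_j}_\infty\norm{E_Q(\eta_j^*u_n\xi)}_2\rarrow0$, contradicting $\norm{u_n\xi}_2=\norm{\xi}_2>0$. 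Thus no decaying sequence can exist.

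The heart of the proof, and the step I expect to be the main obstacle, is $(2)\Rightarrow(3)$. Here I would consider the $\norm{\cdot}_{2,\mathrm{Tr}}$-closed convex hull
\[
K=\overline{\mathrm{conv}}^{\,\norm{\cdot}_{2,\mathrm{Tr}}}\{\,ue_Qu^*: u\in\cU\,\}\sub L^2(\ip{M}{e_Q},\mathrm{Tr}),
\]
a bounded closed convex set (each generator is a projection of trace $\mathrm{Tr}(e_Q)=\tau(q)<\infty$), and let $a$ be its unique element of minimal norm. Since $\cU$ is a group, conjugation by $w\in\cU$ permutes the generators, preserves $K$, and fixes $a$; hence $a\in\cU'\cap\ip{M}{e_Q}=P'\cap\ip{M}{e_Q}$, with $0\le a\le1$ and $\mathrm{Tr}(a)\le\tau(q)<\infty$ by lower semicontinuity of $\mathrm{Tr}$. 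The decisive point is that $a\neq0$: computing $\norm{\sum_i\lambda_iu_ie_Qu_i^*}_{2,\mathrm{Tr}}^2=\sum_{i,j}\lambda_i\lambda_j\norm{E_Q(u_i^*u_j)}_2^2$, one sees that $a=0$ would produce convex combinations of arbitrarily small norm, and a maximality/separability argument in the spirit of Popa's original proof then extracts from these a single sequence $(u_n)\sub\cU$ with $\norm{E_Q(x^*u_ny)}_2\rarrow0$ for all $x,y$ — exactly the sequence forbidden by (2). With $a\neq0$ in hand, a spectral projection $f=\chi_{[t,1]}(a)$ for small $t>0$ is a nonzero projection in $P'\cap\ip{M}{e_Q}$ with $\mathrm{Tr}(f)\le t^{-1}\mathrm{Tr}(a)<\infty$, giving (3). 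The genuinely delicate parts are the verification that the minimal-norm element is $\cU$-conjugation invariant and, above all, the precise translation of ``$a\neq0$'' into the negation of the decay condition via the extraction of one decaying sequence; the remainder is bookkeeping with the trace $\mathrm{Tr}$ and the corners $p,q$.
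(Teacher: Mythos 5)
The paper itself contains no proof of this statement: it is quoted verbatim from \cite{Po03}, so your proposal can only be compared with Popa's original argument. Your overall architecture does match it — the basic construction $\ip{M}{e_Q}$, the intermediate condition (3) on finite-trace projections in $P'\cap\ip{M}{e_Q}$, the bimodule dictionary for $(1)\iff(3)$, and the Pimsner--Popa expansion for $(3)\Rightarrow(2)$ are all sound modulo standard bookkeeping. But there is a genuine gap exactly at the step you flag as the heart of the matter, $(2)\Rightarrow(3)$.

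Averaging conjugates of $e_Q$ alone is not enough. The operators $ue_Qu^*$ only ``see'' the $P$-$Q$-bimodule $\overline{\operatorname{span}}\,P\hat{Q}\sub L^2(M)$, whereas the finite bimodule witnessing intertwining may be orthogonal to it, so the minimal-norm element of your set $K$ can vanish even when condition (2) holds. Concretely: take $M=L(\bF_2)$ with free generators $a,b$, let $Q=L(\langle a\rangle)$, $P=bQb^*$, and $\cU=\{ba^nb^*: n\in\Z\}$, a group with $\cU''=P$. Condition (1) holds trivially (take $\psi=\Ad(b^*)|_P$ and $v=b^*$), hence by the theorem condition (2) holds as well — directly, $\norm{E_Q(b^*u_nb)}_2=\norm{a^{k_n}}_2=1$ for every $u_n=ba^{k_n}b^*$. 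Yet $E_Q\bigl((ba^mb^*)^*(ba^nb^*)\bigr)=E_Q(ba^{n-m}b^*)=\delta_{mn}1$, so your formula gives
\[
\norm{\tfrac{1}{N}\sum_{n=1}^N u_ne_Qu_n^*}_{2,\mathrm{Tr}}^2=\tfrac{1}{N}\rarrow 0,
\]
i.e.\ $0\in K$ and the minimal-norm element is $a=0$. Thus your decisive claim — that $a=0$ allows one to extract a sequence $(u_n)\sub\cU$ with $\norm{E_Q(x^*u_ny)}_2\rarrow0$ for all $x,y\in pMq$ — is false; no extraction argument can exist, and the proof collapses precisely at the step you called delicate.

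The repair is what Popa actually does, and it changes the convex set rather than the extraction. First use separability of $M$ and a diagonal argument to negate (2) in a uniform form: there exist $\eps>0$ and a \emph{finite} set $F\sub pMq$ such that $\sum_{x,y\in F}\norm{E_Q(x^*uy)}_2^2\geq\eps$ for all $u\in\cU$. Then run your minimal-norm argument on $K'=\overline{\mathrm{conv}}^{\,\norm{\cdot}_{2,\mathrm{Tr}}}\{udu^*: u\in\cU\}$ where $d=\sum_{y\in F}ye_Qy^*$ has finite trace. Since $\cU$ is a group, for all $u,v\in\cU$ one computes $\mathrm{Tr}\bigl((udu^*)(vdv^*)\bigr)=\sum_{x,y\in F}\norm{E_Q(y^*(u^*v)x)}_2^2\geq\eps$, and this lower bound propagates by bilinearity and continuity of the inner product to $\mathrm{Tr}(cc')\geq\eps$ for all $c,c'\in K'$; hence the minimal-norm element satisfies $\norm{a}_{2,\mathrm{Tr}}^2\geq\eps>0$ with no extraction needed. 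The remainder of your argument (conjugation-invariance of $a$, lower semicontinuity of $\mathrm{Tr}$, spectral cut-off, and the passage to (1)) then goes through as you wrote it.
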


\begin{terminology}
	If one of the equivalent conditions of the above theorem holds, we say that \emph{a corner of} $P$ \emph{embeds into} $Q$ \emph{inside} $M$, and we write $P\emb_M Q$. If $Pp'\emb_M Q$ for any nonzero projection $p'\in P'\cap pMp$, then we say that $P$ embeds strongly into $Q$ inside $M$, and we write $P\emb_M^s Q$.
\end{terminology}

\subsubsection{Relative amenability}

Recall that a tracial von Neumann algebra $(M,\tau)$ is called \textit{amenable} if there exists a positive linear functional $\vphi:B(L^2(M))\rarrow\C$ such that $\vphi|_M=\tau$ and $\vphi$ is $M$-central, i.e. $\vphi(xT)=\vphi(Tx)$ for all $x\in M$, $T\in B(L^2(M))$. In \cite{OP07} Ozawa and Popa introduced the analogous notion of relative amenability:

\begin{definition}
	Suppose $P\sub pMp$, $Q\sub M$ are von Neumann subalgebras. Then we say that $P$ is \emph{amenable relative to} $Q$ \emph{inside} $M$ if there exists a positive linear functional $\vphi:p\ip{M}{e_Q}p\rarrow \C$ such that $\vphi|_{pMp}=\tau$ and $\vphi$ is $P$-central.
\end{definition}

\subsubsection{Relatively strongly solid groups}

Recall that a von Neumann algebra is called \emph{diffuse} if it does not contain a minimal projection. In \cite{Oz03} Ozawa established that the group von Neumann algebra of a non-elementary hyperbolic group is \emph{solid}: the relative commutant of any diffuse von Neumann subalgebra is amenable. Later Ozawa and Popa strengthened this result in \cite{OP08} for free groups by proving that $L\bF_n$, $2\leq n\leq \infty$, is \emph{strongly solid}: the normalizer of any diffuse amenable von Neumann subalgebra is still amenable. Later Chifan and Sinclair showed in \cite{CS11} that this actually holds for all non-elementary hyperbolic groups.

The next breakthrough was realized by Popa and Vaes who showed in \cite{PV11,PV12} that non-abelian free groups and more generally non-elementary hyperbolic groups are \emph{relatively strongly solid}. Here we use the terminology from \cite[Definition~2.7]{CIK13}.

\begin{definition}\label{def:rss}
	A countable non-amenable group $\Gam$ is called \emph{relatively strongly solid} if for any tracial crossed product $M:=B\cross\Gam$ and all von Neumann subalgebras $A\sub M$ with $A$ amenable relative to $B$ inside $M$ we have either
	\begin{enumerate}
		\item $A\emb_M B$, or
		\item $\cN_M(A)''$ is still amenable relative to $B$ inside $M$.
	\end{enumerate}
	We denote the class of relatively strongly solid groups by $\cC_{rss}$.
\end{definition}

Note that it follows from Definition~\ref{def:rss} that $L^\infty(X)\cross\Gamma$ has $L^\infty(X)$ as its unique Cartan subalgebra up to unitary conjugacy if $\Gam\in\cC_{rss}$ and the action is free and ergodic, see also \cite{PV11, PV12}.

\subsubsection{Direct integrals}\label{sec:directint}

In this section we recall the basic definitions and properties we need from the theory of direct integral decompositions of von Neumann algebras. A lot of it is taken from chapter 14 of \cite{KRII}. We start with the direct integral of Hilbert spaces. Recall that a standard probability space $(X,\mu)$ is a probability space whose (Borel) $\sigma$-algebra is generated by the open sets of some Polish topology on $X$.

\begin{definition}\label{def:dint}
	Let $(X,\mu)$ be a standard probability space. Let $\{\cH_x\}$ be a family of separable Hilbert spaces indexed by the points $x$ of $X$. A separable Hilbert space $\cH$ is the \emphind{direct integral} of $\{\cH_x\}$ over $(X,\mu)$, written $\cH=\dint_X \cH_x\,d\mu(x)$, if for each $\xi\in\cH$ there exists a function $x\mapsto \xi(x)$ on $X$ such that $\xi(x)\in\cH_x$ for every $x$ and
	\begin{enumerate}[label=(\roman*)]
		\item $x\mapsto \ip{\xi(x)}{\eta(x)}$ is integrable and $\ip{\xi}{\eta}=\int_X \ip{\xi(x)}{\eta(x)}\,d\mu(x)$ for all $\xi,\eta\in\cH$,
		\item if $\xi_x\in\cH_x$ for every $x\in X$ and $x\mapsto\ip{\xi_x}{\eta(x)}$ is integrable for every $\eta\in\cH$ then there exists $\xi\in\cH$ such that $\xi(x)=\xi_x$ for almost every $x\in X$.
	\end{enumerate}
	We call $\dint_X \cH_x\,d\mu(x)$ and $x\mapsto \xi(x)$ the \emph{direct integral decompositions} of $\cH$ and $\xi$ respectively.
\end{definition}

Two very easy examples are the following.

\begin{example}
	\begin{enumerate}
		\item The (discrete) direct sum of countably many Hilbert spaces $\{\cH_n\}$ can be viewed as the direct integral of $\{\cH_n\}$ over the natural numbers with the counting measure.
		\item Given $(X,\mu)$ as above it is easy to check that $L^2(X,\mu)=\dint_X \C\,d\mu(x)$.
	\end{enumerate}
\end{example}

Once we have a direct integral of Hilbert spaces, we can consider the appropriate notions of the direct integral of operators and after that of von Neumann algebras as well.

\begin{definition}
	If $\cH=\dint_X \cH_x\,d\mu(x)$, an operator $T\in \cB(\cH)$ is called \emph{decomposable}\index{decomposable operator} if there is a function $x\mapsto T(x)$ on $X$ such that $T(x)\in\cB(\cH_x)$ for every $x$ and such that for every $\xi\in\cH$, $T(x)\xi(x)=(T\xi)(x)$ for almost every $x$. If $T(x)=f(x)I_x$ for almost every $x$, we say that $T$ is \emph{diagonalizable}.
\end{definition}

\begin{remark}
	It is easy to show that for $\xi,\eta\in\cH$, respectively $S,T\in\cB(\cH)$ decomposable, we have $\xi=\eta$ if and only if $\xi(x)=\eta(x)$ almost everywhere, respectively $S=T$ if and only if $S(x)=T(x)$ almost everywhere.
\end{remark}

The following proposition tells us that direct integrals commute with all the basic operations, which allows us to manipulate them easily.

\begin{proposition}[{\cite[Proposition~14.1.8]{KRII}}]\label{prop:basicprop}
	If $T, T_1,T_2$ are decomposable operators on $\cH=\dint_X \cH_x\,d\mu(x)$, then $aT_1+T_2, T_1T_2$ and $T^*$ are decomposable. Moreover the following hold for almost every $x$:
	\begin{enumerate}[label=(\roman*),font=\normalfont]
		\item $(aT_1+T_2)(x)=aT_1(x)+T_2(x)$,
		\item $(T_1T_2)(x)=T_1(x)T_2(x)$,
		\item $T^*(x)=T(x)^*$.
	\end{enumerate}
\end{proposition}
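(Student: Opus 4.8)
The plan is to verify each of the three operations by exhibiting a candidate decomposing field and checking it against the defining property of decomposability, using the uniqueness remark above (that a vector or a decomposable operator is determined almost everywhere by its components) to pin the field down. First I would record the basic linearity of the vector decomposition map $\xi\mapsto\xi(x)$: for $\xi,\eta\in\cH$ and a scalar $a$ one has $(a\xi+\eta)(x)=a\xi(x)+\eta(x)$ for almost every $x$. This follows because the field $x\mapsto a\xi(x)+\eta(x)$ satisfies the integrability condition (ii) of Definition~\ref{def:dint} (being a scalar combination of the components of $\xi$ and $\eta$), hence represents some vector $\zeta\in\cH$; testing inner products against an arbitrary $\rho\in\cH$ and using property (i) shows $\ip{\zeta}{\rho}=\ip{a\xi+\eta}{\rho}$, so $\zeta=a\xi+\eta$, and the uniqueness remark yields the claimed componentwise identity.

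With this in hand, part (i) is immediate: for any $\xi\in\cH$,
\[
((aT_1+T_2)\xi)(x)=a(T_1\xi)(x)+(T_2\xi)(x)=aT_1(x)\xi(x)+T_2(x)\xi(x)
\]
for almost every $x$, so $x\mapsto aT_1(x)+T_2(x)$ decomposes $aT_1+T_2$. For part (ii) I would compose the two almost-everywhere identities: decomposability of $T_1$ applied to the vector $T_2\xi$ gives $((T_1T_2)\xi)(x)=T_1(x)(T_2\xi)(x)$ a.e., and decomposability of $T_2$ applied to $\xi$ gives $(T_2\xi)(x)=T_2(x)\xi(x)$ a.e.; intersecting the two conull sets yields $((T_1T_2)\xi)(x)=T_1(x)T_2(x)\xi(x)$ a.e. For part (iii) I would argue through inner products instead: for all $\xi,\eta\in\cH$,
\[
\ip{T^*\xi}{\eta}=\ip{\xi}{T\eta}=\int_X\ip{\xi(x)}{T(x)\eta(x)}\,d\mu(x)=\int_X\ip{T(x)^*\xi(x)}{\eta(x)}\,d\mu(x),
\]
so the field $x\mapsto T(x)^*$ represents the operator $T^*$, and the uniqueness remark identifies $T^*(x)=T(x)^*$ a.e.

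The genuine content, and the step I expect to be the main obstacle, is not these pointwise algebraic identities but the verification that each candidate field actually defines a \emph{decomposable} operator in the precise sense, i.e.\ that $x\mapsto aT_1(x)+T_2(x)$, $x\mapsto T_1(x)T_2(x)$, and $x\mapsto T(x)^*$ are measurable operator fields giving bounded operators on $\cH$. Boundedness is cheap, since $\norm{T_1(x)T_2(x)}\leq\norm{T_1(x)}\,\norm{T_2(x)}$ is essentially bounded and similarly for the other two fields, but measurability must be phrased relative to the fixed measurable structure on $\{\cH_x\}$ underlying the direct integral, which the definition above suppresses. I would handle this by fixing a fundamental sequence of measurable vector fields $x\mapsto e_i(x)$ spanning each $\cH_x$ and checking that the scalar functions $x\mapsto\ip{(aT_1(x)+T_2(x))e_i(x)}{e_j(x)}$, together with their analogues for products and adjoints, are measurable; since these are pointwise scalar combinations, products, and conjugates of the measurable matrix coefficients of $T_1,T_2,T$, measurability is inherited. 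Once measurability is secured, the statement reduces to the clean componentwise computations above.
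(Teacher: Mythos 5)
The paper offers no proof of this proposition at all: it is imported verbatim from \cite{KRII} (Proposition~14.1.8) as background, so there is no in-paper argument to compare against. Judged on its own merits, your proof is correct, and it is in fact essentially the textbook argument: linearity of the vector decomposition via condition (ii) of Definition~\ref{def:dint} plus the uniqueness remark, parts (i) and (ii) by composing almost-everywhere identities, and part (iii) through inner products. Two small comments. First, in part (iii) the phrase \textquotedblleft so the field $x\mapsto T(x)^*$ represents the operator $T^*$" compresses the one step that genuinely needs Definition~\ref{def:dint}(ii): you should note that $x\mapsto\ip{T(x)^*\xi(x)}{\eta(x)}=\ip{\xi(x)}{(T\eta)(x)}$ a.e.\ is integrable for every $\eta$, so the field $x\mapsto T(x)^*\xi(x)$ is the decomposition of some vector $z\in\cH$, and only then do the displayed inner-product identities force $z=T^*\xi$ and the uniqueness remark give $(T^*\xi)(x)=T(x)^*\xi(x)$ a.e.\ --- exactly the mechanism you already spelled out in your opening paragraph, so this is a presentational gap, not a mathematical one. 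Second, your closing paragraph misidentifies the \textquotedblleft main obstacle": the definition of decomposability used here (and in \cite{KRII}) imposes \emph{no} measurability requirement on the operator field $x\mapsto T(x)$ beyond the a.e.\ identity $T(x)\xi(x)=(T\xi)(x)$ for each $\xi$, so the discussion of fundamental sequences and measurable matrix coefficients is superfluous; your componentwise computations already constitute a complete proof, and likewise no essential boundedness of $x\mapsto\norm{T(x)}$ is needed, only that each $T(x)$ lies in $\cB(\cH_x)$.
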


\begin{theorem}[{\cite[Theorem~14.1.10]{KRII}}]
	If $\cH=\dint_X \cH_x\,d\mu(x)$, then the set $\cR$ of decomposable operators is a von Neumann algebra. Moreover $\cR$ has abelian commutant consisting of the algebra $\cC$ of diagonalizable operators.
\end{theorem}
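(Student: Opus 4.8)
The plan is to reduce everything to the single identity $\cR=\cC'$, together with the observation that $\cC$ is an abelian von Neumann algebra. Granting these, both assertions follow formally: a commutant is always a von Neumann algebra, so $\cR=\cC'$ is one; and since $\cC$ is a von Neumann algebra we have $\cC''=\cC$, whence $\cR'=(\cC')'=\cC''=\cC$, which is exactly the claim that the commutant of $\cR$ is $\cC$. Abelianness of $\cC$ then supplies the remaining word in the statement. Thus it suffices to prove (a) $\cC$ is an abelian von Neumann algebra, and (b) $\cR=\cC'$.

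Part (a) is essentially bookkeeping. Two diagonalizable operators act fiberwise as multiplication by scalars, hence commute fiberwise, and since an operator is determined by its field up to a null set they commute; so $\cC$ is abelian, and it contains $I$ via the constant field $x\mapsto I_x$. Sending a diagonalizable $D$ with $D(x)=f(x)I_x$ to $f\in\Linf$ sets up a $*$-isomorphism of $\cC$ onto $\Linf$; as this is a normal representation, its image $\cC$ is weakly closed, i.e. a von Neumann algebra. One inclusion of (b) is then immediate from Proposition~\ref{prop:basicprop}: if $T\in\cR$ has field $T(x)$ and $D\in\cC$ has field $f(x)I_x$, then $(TD)(x)=f(x)T(x)=(DT)(x)$ almost everywhere, so $TD=DT$ and $\cR\sub\cC'$.

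The substance lies in the reverse inclusion $\cC'\sub\cR$: every $T\in\cB(\cH)$ commuting with all diagonalizable operators must be decomposable. First I would use separability of $\cH$ to fix a fundamental sequence $(\xi_n)_n$ whose fields $(\xi_n(x))_n$ are total in $\cH_x$ for almost every $x$, and then attempt to build the field of $T$ by prescribing $T(x)\xi_n(x):=(T\xi_n)(x)$. The commutation relation is exactly the tool that makes this legitimate: for a measurable $E\sub X$, applying $T$ to $\chi_E\cdot\xi_n$ and using $T\chi_E=\chi_E T$ shows that $(T\xi_n)(x)$ depends on $x$ only locally, which yields both the consistency of the prescription and a fiberwise norm bound. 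Concretely, for the bound one tests against the indicator of $E_\eps:=\{x:\norm{(T\xi_n)(x)}>(\norm{T}+\eps)\norm{\xi_n(x)}\}$: comparing $\norm{T\chi_{E_\eps}\xi_n}\leq\norm{T}\,\norm{\chi_{E_\eps}\xi_n}$ with the fiberwise lower bound on $E_\eps$ forces $\mu(E_\eps)=0$, and intersecting these null sets over $n$ and over a sequence $\eps\to 0$ gives $\norm{T(x)}\leq\norm{T}$ almost everywhere. Linearity over rational, then complex, combinations of the $\xi_n(x)$ is obtained in the same way, after which each $T(x)$ extends by continuity to a bounded operator on $\cH_x$, and measurability of $x\mapsto T(x)$ follows from that of each $x\mapsto(T\xi_n)(x)$. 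A density argument finally promotes $T(x)\xi_n(x)=(T\xi_n)(x)$ to $T(x)\xi(x)=(T\xi)(x)$ for all $\xi\in\cH$, so that $T\in\cR$.

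The main obstacle is precisely this manufacturing of a genuine measurable field $x\mapsto T(x)$ out of a single operator $T\in\cC'$. The difficulty is controlling uncountably many potential exceptional sets; the resolution is to encode $T$ through the countable data $\{(T\xi_n)(x)\}_n$ and to leverage commutation with the abundant supply of multiplication operators $\chi_E$, which converts global boundedness of $T$ into almost-everywhere fiberwise boundedness. Once the field is in hand, checking that it satisfies the definition of a decomposable operator is routine.
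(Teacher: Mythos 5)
The paper does not actually prove this statement---it is quoted directly from \cite[Theorem~14.1.10]{KRII} in the preliminaries---so there is no internal proof to compare against; judged on its own, your argument is correct and is essentially the standard proof given in that reference: reduce both assertions to the identity $\cR=\cC'$ together with the fact that $\cC$ is an abelian von Neumann algebra, then obtain the hard inclusion $\cC'\sub\cR$ by exploiting commutation with the indicator multiplication operators $\chi_E$ to convert the global bound $\norm{T}$ into almost-everywhere fiberwise bounds along a fundamental sequence, and to get well-definedness and linearity of the fibre operators on rational spans before extending by continuity. The only places where you are terser than a complete write-up would need to be are routine pieces of direct-integral bookkeeping: normality of the representation $L^\infty(X)\ni f\mapsto M_f$ (checked by dominated convergence on matrix coefficients, which gives weak closedness of $\cC$), and the existence of a fundamental sequence whose fields are total in $\cH_x$ for almost every $x$ (a measurable-selection argument from separability of $\cH$).
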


\begin{definition}
	A von Neumann algebra $M$ on $\cH=\dint_X \cH_x\,d\mu(x)$ is called \emph{decomposable}\index{decomposable von Neumann algebra} if it is a subalgebra of the (von Neumann) algebra of decomposable operators.
\end{definition}

\begin{proposition}[{\cite[Proposition~14.1.18]{KRII}}]\label{prop:decvNa}
	If $M$ is a decomposable von Neumann algebra on $\cH=\dint_X \cH_x\,d\mu(x)$ containing the algebra of diagonalizable operators, then there exist von Neumann algebras $M_x$ on $\cB(\cH_x)$ such that $M=\dint_X M_x\,d\mu(x)$ in the following sense: If $T\in \cB(\cH)$ is a decomposable operator, then $T\in M$ if and only if $T(x)\in M_x$ almost everywhere. Moreover, if $N$ is a von Neumann algebra with decomposition $N=\dint_X N_x\,d\mu(x)$ and $M_x=N_x$ almost everywhere, then $M=N$. 
\end{proposition}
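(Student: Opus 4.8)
The plan is to build the fields $M_x$ from a single countable generating family and then verify the stated characterization by a fiberwise commutant computation. First I would exploit separability of $\cH$: since $M$ acts on a separable Hilbert space, its unit ball is separable in the strong operator topology, so I can fix a countable strongly dense sequence in $M$ and enlarge it by adjoints to obtain a countable self-adjoint set $(A_n)_n\subseteq M$ with $\{A_n\}''=M$. Each $A_n$ lies in the algebra $\cR$ of decomposable operators and therefore has a decomposition $x\mapsto A_n(x)$, and I would define $M_x:=\{A_n(x):n\in\N\}''$ for almost every $x$. The first genuinely technical point is to check that $x\mapsto M_x$ is a bona fide measurable field of von Neumann algebras, so that $\dint_X M_x\,d\mu(x):=\{T\in\cR: T(x)\in M_x \text{ for a.e.\ }x\}$ is well defined and is itself a von Neumann algebra; this rests on the measurable-selection machinery for fields of von Neumann algebras, and I expect it to be the main obstacle.

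Writing $\tilde M:=\dint_X M_x\,d\mu(x)$, I would establish $M=\tilde M$ by two inclusions. For $M\subseteq\tilde M$ the argument is soft: each generator satisfies $A_n(x)\in M_x$ a.e.\ by construction, so $A_n\in\tilde M$, and since $\tilde M$ is a von Neumann algebra containing a generating family for $M$ we get $M=\{A_n\}''\subseteq\tilde M$. The reverse inclusion $\tilde M\subseteq M$ is where the hypotheses $\cC\subseteq M\subseteq\cR$ enter, through the commutant. Because $M$ contains the diagonalizable operators $\cC$, taking commutants reverses the inclusion to give $M'\subseteq\cC'=\cR$, so every $S\in M'$ is decomposable. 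Fixing such an $S$, the fact that $S$ commutes with each $A_n$ combined with Proposition~\ref{prop:basicprop} and the observation that two decomposable operators coincide precisely when their fibers coincide almost everywhere yields $S(x)A_n(x)=A_n(x)S(x)$ for a.e.\ $x$, simultaneously for all $n$ (intersecting countably many conull sets). Hence $S(x)\in\{A_n(x)\}'=M_x'$ a.e. Now for any $T\in\tilde M$ we have $T(x)\in M_x$ and $S(x)\in M_x'$ a.e., so $T(x)S(x)=S(x)T(x)$ a.e., and the same observation gives $TS=ST$. As $S\in M'$ was arbitrary, $T\in M''=M$, which proves $\tilde M\subseteq M$.

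The uniqueness clause is then immediate from the characterization: if $N=\dint_X N_x\,d\mu(x)$ with $M_x=N_x$ almost everywhere, then for a decomposable $T$ one has $T\in M\iff T(x)\in M_x=N_x\text{ a.e.}\iff T\in N$, whence $M=N$. The algebraic heart of the argument—the two-inclusion computation—thus reduces cleanly to fiberwise statements via Proposition~\ref{prop:basicprop} and the bicommutant theorem, with essentially all the weight carried by the commutant step. The only delicate ingredients remain the measurability of the field $x\mapsto M_x$ and the resulting fact that $\tilde M$ is strongly closed, which is what makes the generation argument for $M\subseteq\tilde M$ legitimate; these are exactly the points I would need to treat with the measurable-field theory rather than by a direct limiting argument, since strong limits in $M$ do not pass to fibers in an elementary way.
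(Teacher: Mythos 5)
Your proposal is correct in substance, and it follows the same route as the standard proof; note first that the paper itself does not prove this proposition at all but simply imports it from \cite[Proposition~14.1.18]{KRII}, so the only meaningful comparison is with the argument there. Your choice of a countable self-adjoint SOT-dense family $(A_n)_n$ with $\{A_n\}''=M$, the definition $M_x:=\{A_n(x)\mid n\in\N\}''$, and above all the commutant computation are exactly the crux of that argument: from $\cC\sub M$ you get $M'\sub\cC'=\cR$, so each $S\in M'$ is decomposable, commutes with every $A_n$ fibrewise on a common conull set, hence satisfies $S(x)\in M_x'$ almost everywhere, and therefore every decomposable $T$ with $T(x)\in M_x$ a.e.\ commutes with $M'$ and lies in $M''=M$. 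This inclusion, and the uniqueness clause, are complete and correct as you state them.

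Where your write-up falls short of a full proof is the step you defer, and your closing diagnosis of that step is backwards. You assert that strong limits in $M$ do not pass to fibres in an elementary way, and that one therefore needs measurable-selection machinery for fields of von Neumann algebras to know that $\tilde{M}:=\{T\in\cR\mid T(x)\in M_x \text{ a.e.}\}$ is a von Neumann algebra. In fact the elementary limiting argument is available, and it is precisely how the proof is usually finished. Suppose $(T_n)_n$ are decomposable, $\sup_n\norm{T_n}<\infty$ and $T_n\rarrow T$ strongly. Pick a sequence $(\xi_j)_j\sub\cH$ whose fibres $\{\xi_j(x)\}_j$ are dense in $\cH_x$ for almost every $x$ (such a fundamental sequence exists by separability). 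Then $\norm{(T_n-T)\xi_j}^2=\int_X\norm{(T_n(x)-T(x))\xi_j(x)}^2\,d\mu(x)\rarrow 0$, so a diagonal subsequence gives $T_{n_k}(x)\xi_j(x)\rarrow T(x)\xi_j(x)$ a.e.\ simultaneously for all $j$; combined with the essential bound $\norm{T_{n_k}(x)}\leq\sup_n\norm{T_n}$ a.e., this yields $T_{n_k}(x)\rarrow T(x)$ strongly for almost every $x$. Choosing the $A_n$ dense in each ball of $M$ and applying this to sequences from $\{A_n\}$ shows directly that every $T\in M$ satisfies $T(x)\in\{A_n(x)\}''=M_x$ a.e., i.e.\ $M\sub\tilde{M}$, with no appeal to $\tilde{M}$ being closed; together with your commutant inclusion $\tilde{M}\sub M$ this gives $M=\tilde{M}$, and the fact that $\tilde{M}$ is a von Neumann algebra then comes out as a consequence rather than being an input. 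So the gap in your argument is real but is filled by exactly the ``direct limiting argument'' you dismiss, and no selection machinery is needed anywhere.
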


\begin{remark}
	In \cite{KRII} a decomposable von Neumann algebra $M$ is defined through the existence of a norm-separable strong operator dense C$^*$-subalgebra $A$ such that the identity representation $\iota$ is decomposable and $\iota_x(A)$ is strong operator dense in $M_x$ almost everywhere. It is then shown (\cite[Theorem~14.1.16]{KRII}) that this is equivalent to our definition above and that the decomposition $x\mapsto M_x$ is unique.
\end{remark}

\begin{proposition}[{\cite[Proposition~14.1.24]{KRII}}]\label{prop:Comm}
	Let $M$ be a decomposable von Neumann algebra on $\cH=\dint_X \cH_x\,d\mu(x)$ containing the algebra $\cC$ of diagonalizable operators, with decomposition $x\mapsto M_x$. Then $M'$ is also decomposable and $(M')_x=(M_x)'$ almost everywhere.
\end{proposition}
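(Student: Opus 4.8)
Since $M$ contains the algebra $\cC$ of diagonalizable operators, the plan is to obtain decomposability of $M'$ essentially for free and then to identify its fibres by a double–commutant argument carried out fibrewise. For the first assertion, recall from \cite[Theorem~14.1.10]{KRII} that the algebra $\cR$ of all decomposable operators satisfies $\cR'=\cC$, whence $\cC'=\cR''=\cR$. From $\cC\sub M$ we get $M'\sub\cC'=\cR$, so every operator in $M'$ is decomposable; moreover $\cC\sub M'$ because the diagonalizable operators commute with all of $\cR\super M$. Thus Proposition~\ref{prop:decvNa} applies to $M'$ and yields a decomposition $M'=\dint_X (M')_x\,d\mu(x)$. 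It then remains to prove $(M')_x=(M_x)'$ almost everywhere.

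First I would establish the inclusion $(M')_x\sub(M_x)'$. Fix countable families $\{S_m\}\sub M$ and $\{T_n\}\sub M'$ whose fibrewise evaluations $\{S_m(x)\}$ and $\{T_n(x)\}$ generate $M_x$ and $(M')_x$ respectively for almost every $x$; such families exist by the separable strong-operator-dense $C^*$-subalgebra description of decomposable algebras recalled in the remark following Proposition~\ref{prop:decvNa}. For each pair $(n,m)$ the relation $T_nS_m=S_mT_n$ gives, via Proposition~\ref{prop:basicprop}, $T_n(x)S_m(x)=S_m(x)T_n(x)$ for almost every $x$; intersecting the corresponding countably many conull sets, we find $T_n(x)\in(M_x)'$ for all $n$ simultaneously, almost everywhere. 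As the $\{T_n(x)\}$ generate $(M')_x$, we conclude $(M')_x\sub(M_x)'$ almost everywhere.

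For the reverse inclusion I would pass to the field of commutants. Granting that $x\mapsto(M_x)'$ is again a measurable field of von Neumann algebras, form the decomposable algebra $\tilde N:=\dint_X (M_x)'\,d\mu(x)$, which contains $\cC$. Applying the inclusion just proved, now with $\tilde N$ in the role of $M$, shows $(\tilde N')_x\sub(\tilde N_x)'=((M_x)')'=M_x$ almost everywhere. On the other hand $M_x$ commutes with $(M_x)'=\tilde N_x$ for every $x$, so by Proposition~\ref{prop:basicprop} $M\sub\tilde N'$ and hence $M_x\sub(\tilde N')_x$ almost everywhere. Thus $(\tilde N')_x=M_x$ almost everywhere, and the uniqueness clause of Proposition~\ref{prop:decvNa} gives $\tilde N'=M$. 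Taking commutants, $\tilde N=\tilde N''=M'$, and comparing decompositions once more yields $(M_x)'=\tilde N_x=(M')_x$ almost everywhere, as desired.

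The main obstacle is precisely the measurability statement invoked at the start of the last paragraph: that the pointwise commutants $x\mapsto(M_x)'$ form a measurable field, so that $\tilde N$ is a genuine decomposable von Neumann algebra with fibres $(M_x)'$. This is the only step that is not routine bookkeeping with Propositions~\ref{prop:basicprop} and \ref{prop:decvNa}; it rests on the separability of the $\cH_x$ together with a measurable-selection argument, whereas everything else reduces to the elementary principle that decomposable operators commute globally if and only if they commute fibrewise.
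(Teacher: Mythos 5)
Your argument is correct in all of its soft parts, but it has a genuine gap, and the gap sits exactly where the mathematical content of the proposition lies. Your first paragraph (that $M'$ consists of decomposable operators and contains $\cC$, hence admits a decomposition by Proposition~\ref{prop:decvNa}) and your second paragraph (the inclusion $(M')_x\sub (M_x)'$ via countable generating families and Proposition~\ref{prop:basicprop}) are fine. The problem is the step you explicitly ``grant'': that $x\mapsto (M_x)'$ is a measurable field of von Neumann algebras, so that $\tilde N:=\dint_X (M_x)'\,d\mu(x)$ is a genuine decomposable von Neumann algebra whose decomposition has fibres exactly $(M_x)'$. This is not an auxiliary technicality; modulo the soft facts you do prove, it is equivalent to the proposition itself. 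Indeed, granting it, your third paragraph derives the proposition by purely formal double-commutant manipulations; conversely, the proposition immediately yields it (take $\tilde N=M'$). So what you have produced is a reduction of the statement to an equivalent statement, together with a verification that everything else is routine --- which is true, but is also precisely why this result (von Neumann's commutation theorem for direct integrals) has the status of a theorem: all of its difficulty is concentrated in the measurability of the commutant field. It is also why the paper does not prove Proposition~\ref{prop:Comm} at all, but imports it verbatim from \cite[Proposition~14.1.24]{KRII}, where that work is actually carried out.

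To close the gap one must construct, measurably in $x$, enough operators in $(M_x)'$. For instance: fix a countable family $\{S_m\}\sub M$ whose fibres generate $M_x$ almost everywhere, observe that after a measurable trivialization of the field of Hilbert spaces the set $\{(x,T)\mid T\in \mathrm{ball}(B(\cH_x)),\ TS_m(x)=S_m(x)T\ \text{for all } m\}$ is a measurable family of weakly compact sets, and apply a Castaing-type measurable selection theorem to obtain uniformly bounded measurable fields $T_k$ such that $\{T_k(x)\}_k$ is weakly dense in the unit ball of $(M_x)'$ for almost every $x$. Each $T_k$ is then a decomposable operator commuting with $M$ fibrewise, hence $T_k\in M'$, hence $T_k(x)\in (M')_x$ almost everywhere, and therefore $(M_x)'=\{T_k(x)\}_k''\sub (M')_x$ almost everywhere; combined with your easy inclusion this finishes the proof, and note that once the selection is available your bootstrap through $\tilde N$ and its commutant becomes unnecessary. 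Without some argument of this kind (this one, or the one in \cite{KRII}), the proof is incomplete at its decisive point.
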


The following easy lemma describes the maximal abelian subalgebras of a decomposable von Neumann algebra and will be used several times in section~\ref{sec:structure}. 

\begin{lemma}\label{lem:maxab}
	Let $M$ be a decomposable von Neumann algebra on $\cH=\dint_X \cH_x\,d\mu(x)$ containing the algebra $\cC$ of diagonalizable operators, with decomposition $x\mapsto M_x$. Let $A=\dint_X A_x\,d\mu(x)$ be a von Neumann subalgebra of $M$. Then $A$ is maximal abelian inside $M$ if and only if $A_x$ is maximal abelian inside $M_x$ for almost every $x\in X$.
\end{lemma}
\begin{proof}
	By Proposition~\ref{prop:Comm} it follows that
	\[
	A'\cap M = \left(\dint_X A_x\,d\mu(x)\right)'\cap \dint_X M_x\,d\mu(x) = \dint_X A_x'\cap M_x\,d\mu(x).
	\]
	If $A_x$ is maximal abelian inside $M_x$ for almost every $x\in X$, this implies that
	\[
	A'\cap M = \dint_X A_x'\cap M_x\,d\mu(x) = \dint_X A_x\,d\mu(x) = A.
	\]
	If on the other hand $A$ is maximal abelian inside $M$, we get
	\[
	\dint_X A_x\,d\mu(x) = A = A'\cap M = \dint_X A_x'\cap M_x\,d\mu(x),
	\]
	and it follows from the uniqueness of the decomposition that $A_x=A_x'\cap M_x$ almost everywhere.
\end{proof}

We end with the following theorem which will be helpful for us in section~\ref{sec:Aut}.

\begin{theorem}\label{thm:Tak-Desint}
	Suppose $(M_1,\tau_1)=\dint_X (M_1(x),\tau_1(x))d\mu_1(x)$ and $(M_2,\tau_2)=\dint_Y (M_2(x),\tau_2(x))d\mu_2(x)$ are direct integrals of tracial von Neumann algebras and $\al:M_1\rarrow M_2$ is a trace preserving automorphism such that $\al(L^\infty(X))=L^\infty(Y)$. Then there exist full measure sets $X'\sub X$, $Y'\sub Y$, and a Borel isomorphism $\Phi: Y'\rarrow X'$ with $\Phi(\mu_2)$ equivalent to $\mu_1$ such that $\al$ decomposes into tracial isomorphisms $\{\al_x:M_1(x)\rarrow M_2(\Phi^{-1}(x))\}$.
\end{theorem}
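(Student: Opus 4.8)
The plan is to reduce the statement to a fiberwise disintegration of the spatial implementation of $\al$, after first extracting the point isomorphism $\Phi$ from the abelian part $\al|_{L^\infty(X)}$. First I would analyze $\al|_{L^\infty(X)}$. Since $L^\infty(X)$ and $L^\infty(Y)$ are precisely the diagonalizable subalgebras of $M_1$ and $M_2$, the hypothesis $\al(L^\infty(X))=L^\infty(Y)$ means $\al$ restricts to a normal trace-preserving $*$-isomorphism between the function algebras of two standard probability spaces. The classical measure-isomorphism theorem then provides conull Borel sets $X'\sub X$, $Y'\sub Y$ and a Borel isomorphism $\Phi:Y'\rarrow X'$ with $\Phi(\mu_2)$ equivalent to $\mu_1$, such that $\al(f)=f\circ\Phi$ for all $f\in L^\infty(X)$. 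I would use $\Phi$ to reindex the second decomposition over $X$: writing $\nu:=\Phi(\mu_2)\sim\mu_1$ and $h:=d\nu/d\mu_1>0$ a.e., one has $M_2=\dint_X M_2(\Phi^{-1}(x))\,d\nu(x)$, and the $h^{1/2}$-twist realizes $L^2(M_2)=\dint_X L^2(M_2(\Phi^{-1}(x)))\,d\mu_1(x)$ as a direct integral over the \emph{same} base $(X,\mu_1)$ as $L^2(M_1)=\dint_X L^2(M_1(x))\,d\mu_1(x)$. Under this identification, and since $L^\infty(X)$ is central and is fixed by $\al$ (via $\Phi$), the map $\al$ becomes $L^\infty(X)$-linear.

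Next I would implement $\al$ spatially. The unitary $U:L^2(M_1)\rarrow L^2(M_2)$ determined by $U\hat a=\widehat{\al(a)}$ satisfies $UaU^*=\al(a)$ for left multiplications, and the $L^\infty(X)$-linearity of $\al$ translates into $Uf=fU$ for every diagonalizable $f$. Hence $U$ intertwines the diagonalizable algebras and is therefore decomposable, $U=\dint_X U_x\,d\mu_1(x)$ with each $U_x:L^2(M_1(x))\rarrow L^2(M_2(\Phi^{-1}(x)))$ unitary a.e. Writing $a=\dint_X a(x)\,d\mu_1$ and applying Proposition~\ref{prop:basicprop} to the product $UaU^*$, I obtain $U_x\,a(x)\,U_x^*=\al(a)(x)\in M_2(\Phi^{-1}(x))$ a.e., with the null set a priori depending on $a$. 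Choosing a countable $*$-subalgebra $A_0\sub M_1$ whose fibers $\{a(x):a\in A_0\}$ are strong-operator dense in $M_1(x)$ a.e. (available from the separability built into direct integrals, cf. the remark following Proposition~\ref{prop:decvNa}), I can pass to a single conull set on which $\Ad(U_x)$ carries all these fibers into $M_2(\Phi^{-1}(x))$; since $\Ad(U_x)$ is a normal $*$-homomorphism and $M_2(\Phi^{-1}(x))$ is strong-operator closed, it follows that $\al_x:=\Ad(U_x)$ maps $M_1(x)$ into $M_2(\Phi^{-1}(x))$. Applying the same argument to $\al^{-1}$ (implemented by $U^*$) gives the reverse inclusion, so each $\al_x$ is a $*$-isomorphism onto $M_2(\Phi^{-1}(x))$.

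It remains to verify that the $\al_x$ are trace-preserving. For this I would use that $\al$ preserves the trace: for every $a\in M_1$ and $f\in L^\infty(X)$, the identity $\tau_1(fa)=\tau_2(\al(fa))=\tau_2(f\al(a))$, disintegrated over $(X,\mu_1)$ on both sides, yields
\[
\int_X f(x)\,\tau_1(x)(a(x))\,d\mu_1 = \int_X f(x)\,h(x)\,\tau_2(\Phi^{-1}(x))(\al_x(a(x)))\,d\mu_1.
\]
As $f\in L^\infty(X)$ is arbitrary, this forces $\tau_1(x)(a(x))=h(x)\,\tau_2(\Phi^{-1}(x))(\al_x(a(x)))$ a.e.; running over a countable dense family and evaluating at $a=1$ pins down $h(x)=1$ a.e. and hence $\tau_1(x)=\tau_2(\Phi^{-1}(x))\circ\al_x$ a.e., which is exactly the desired trace preservation (and in particular is compatible with $\Phi(\mu_2)\sim\mu_1$).

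I expect the main obstacle to be the bookkeeping of the various ``almost everywhere'' statements — specifically, upgrading the per-element identities ``for each $a$, a.e.\ $x$'' to the fiberwise conclusions ``a.e.\ $x$, for all $a\in M_1(x)$'' when asserting that $\Ad(U_x)$ lands in the correct fiber and is trace-preserving. This is handled uniformly by the separability of $M_1$ together with the normality of the maps involved. A secondary point requiring care is the reindexing and measure-change in the first step, which must be carried out precisely so that $U$ is genuinely a decomposable operator over a common base $(X,\mu_1)$, and so that the disintegrations of $\tau_1$ and $\tau_2$ used above are the correct ones.
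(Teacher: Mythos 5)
Your proposal is correct, and its opening move is exactly the paper's: trace preservation gives a unitary $U:L^2(M_1)\rarrow L^2(M_2)$ implementing $\al$. The divergence is in what follows. The paper's proof is essentially a citation --- having constructed $U$, it invokes \cite[Theorem~IV.8.23]{Tak01}, which is precisely the disintegration theorem for spatial isomorphisms carrying one diagonalizable algebra onto the other. Your proposal instead proves that cited theorem from scratch: the point realization of $\al|_{L^\infty(X)}$, the reindexing of the second decomposition over the common base $(X,\mu_1)$, the decomposability of $U$ (because it commutes with the diagonalizable algebra, cf.\ the commutant description in \cite[Theorem~14.1.10]{KRII}), and the countable-dense-family bookkeeping needed to upgrade the statements ``for each $a$, a.e.\ $x$'' to ``a.e.\ $x$, for all $a$''. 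All of these steps are sound, so what you have written is a self-contained proof where the paper has a two-line one; the trade-off is purely length versus reliance on a black box. One small simplification you could make: since $\tau_i$ restricted to the diagonalizable algebra is just integration against $\mu_i$, trace preservation of $\al$ forces $\Phi(\mu_2)=\mu_1$ on the nose (not merely equivalence), so the Radon--Nikodym factor $h$ you carry through the argument is identically $1$ from the start; your proof only discovers this at the very end when evaluating at $a=1$, which is correct but makes the reindexing step look more delicate than it is.
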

\begin{proof}
	Since $\al$ is trace preserving, it gives rise to a unitary $U:L^2(M_1)\rarrow L^2(M_2)$ which implements $\al$. The result then immediately follows from \cite[Theorem~IV.8.23]{Tak01}.
\end{proof}

\subsection{Complexity of classification}

In the following we will review some of the set-theoretic notions allowing us to talk about the exact complexity of a classification problem. A good reference is \cite{KT-D13}, where several of the definitions below are taken from.

\subsubsection{First definitions and results}

Given an equivalence relation $E$ on a space $X$, a \emph{(complete) classification} of $X$ up to $E$ consists of a set of invariants $I$ and a map $f:X\rarrow I$ such that $xEy\iff f(x)=f(y)$. Most often the base space $X$ is a standard Borel space, i.e. a Polish space with the Borel $\sigma$-algebra generated by the open sets, and the equivalence relation $E$ is Borel or analytic (as a subspace of $X\times X$). The following important notion gives us a way of comparing equivalence relations.

\begin{definition}\label{def:BorelReduction}
	Let $(X,E)$ and $(Y,F)$ be equivalence relations on standard Borel spaces. Then $E$ is \emph{(Borel) reducible} to $F$, written $E\leq_B F$, if there is a Borel map $f:X\rarrow Y$ such that $xEy\iff f(x)Ff(y)$.
\end{definition}

This is exactly saying that the $F$-classes are complete invariants for the $E$-classes and intuitively means that the classification problem for $E$ is at most as complicated as the one for $F$. If both $E\leq_B F$ and $F\leq_B E$ we say that $E$ is \emph{(Borel) bi-reducible} to $F$ and write $E\sim_B F$. If $E\leq_B F$ but $F\not\leq_B E$, we write $E<_B F$.

For any Polish space $Y$ we can consider the equality relation $=_Y$ on $Y$ given by $\{(x,y)\in Y^2\mid x=y\}$. Denoting by $n$, for $n\in\N$, any set of cardinality $n$ we then have $E\sim_B (=_n)$ for any Borel equivalence relation $E$ with $n$ equivalence classes. We thus get (dropping $=$ for clarity) that
\[
1 <_B 2 <_B 3 <_B \dots <_B \N
\]
is an initial segment of $\leq_B$, as $\N\leq_B E$ for any Borel equivalence relation $E$ with infinitely many equivalence classes. The following dichotomy theorem of Silver extends this result and tells us that $\R\leq_B E$ if $E$ has uncountably many equivalence classes.

\begin{theorem}[{\cite{Sil80}}]
	Let $E$ be a Borel equivalence relation. Then either $E\leq_B \N$ or $\R\leq_B E$.
\end{theorem}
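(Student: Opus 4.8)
The plan is to prove the dichotomy in its standard equivalent combinatorial form: ``$E\leq_B\N$'' means precisely that $E$ has at most countably many classes, while ``$\R\leq_B E$'' means precisely that there is a perfect set of pairwise $E$-inequivalent points. The easy direction is to check that countably many classes forces $E\leq_B\N$: since $E$ is Borel each class $C_n$ is a Borel set, and if there are only countably many of them then $x\mapsto \min\{n : x\in C_n\}$ is a Borel reduction to $\N$. All the work is therefore in the converse, namely that \emph{if $E$ has uncountably many classes then there is a perfect set of pairwise inequivalent points}. After relativizing all parameters I may assume $E$ is a lightface $\Pi^1_1$ (indeed $\Delta^1_1$) equivalence relation on $X=\N^\N$.

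The main tool I would use is the Gandy--Harrington topology $\tau_{GH}$ on $X$, the topology generated by the lightface $\Sigma^1_1$ sets. Two facts drive the argument. First, by the Gandy basis theorem the set $\Omega=\{x : \omega_1^x=\omega_1^{CK}\}$ is $\Sigma^1_1$ and nonempty, and $\tau_{GH}$ restricted to $\Omega$ is strong Choquet, in particular Baire and admitting a compatible complete metric. Second, in $\tau_{GH}$ the $\Sigma^1_1$ sets are by definition open, so $\Pi^1_1$ sets are closed; in particular the $\Pi^1_1$ set $E\subseteq X\times X$ is closed in the Gandy--Harrington topology of $X\times X$, while products of $\Sigma^1_1$ sets remain $\Sigma^1_1$, hence open.

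The heart of the proof is a splitting lemma: for a nonempty $\Sigma^1_1$ set $U\subseteq\Omega$, \emph{either} $U$ is contained in a single $E$-class \emph{or} there are nonempty $\Sigma^1_1$ sets $V_0,V_1\subseteq U$ with $(V_0\times V_1)\cap E=\emptyset$. I would prove the contrapositive: if no such split exists then $E\cap(U\times U)$ meets every nonempty $\Sigma^1_1$ rectangle, i.e. it is $\tau_{GH}$-dense in $U\times U$; but $E$ is $\tau_{GH}$-closed, and a dense closed set is everything, forcing $U\times U\subseteq E$, so $U$ lies in one class. To apply this repeatedly I first discard the ``large'' classes, namely those meeting $\Omega$ in a set with nonempty $\tau_{GH}$-interior; there are only countably many, since $\tau_{GH}\restriction\Omega$ is second countable and each such class is tagged by a basic $\Sigma^1_1$ open set. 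As uncountably many classes remain, I can start from a nonempty $\Sigma^1_1$ set not contained in a single class and on which every class is $\tau_{GH}$-nowhere dense, so the splitting lemma always applies.

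With the splitting lemma in hand the endgame is a routine Cantor-scheme construction: I build nonempty $\Sigma^1_1$ sets $\{U_s\}_{s\in 2^{<\N}}$ inside $\Omega$ with $\tau_{GH}$-closures nested ($\overline{U_{si}}\subseteq U_s$), with diameters tending to $0$ in a complete metric for $\tau_{GH}\restriction\Omega$, and with $(U_{s0}\times U_{s1})\cap E=\emptyset$ at every node. The strong Choquet property yields a unique point $x_\beta\in\bigcap_n U_{\beta\restriction n}$ for each branch $\beta\in 2^\N$; the splitting condition guarantees $x_\beta$ and $x_{\beta'}$ are $E$-inequivalent for $\beta\neq\beta'$, and $\beta\mapsto x_\beta$ is injective and continuous, since $\tau_{GH}$ refines the usual topology. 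This gives a continuous reduction $2^\N\to X$ and hence $\R\leq_B E$. I expect the main obstacle to be the effective-descriptive-set-theory infrastructure — establishing the strong Choquet property of $\tau_{GH}$ on $\Omega$ via the Gandy basis theorem and keeping the whole construction inside $\Omega$ — rather than the combinatorics of the scheme. (Silver's original argument instead proceeds via metamathematics over admissible sets together with Ramsey-type partition results; alternatively one can derive the dichotomy from the $\bG_0$-dichotomy of Kechris--Solecki--Todorcevic, though that in turn rests on the same Gandy--Harrington method.)
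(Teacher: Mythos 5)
First, a point of order: the paper does not prove this statement at all --- it is quoted as background directly from Silver's paper \cite{Sil80} --- so there is no proof in the paper to compare yours against, and your argument has to be judged on its own terms. What you outline is the standard Harrington proof of Silver's dichotomy via the Gandy--Harrington topology (as you note, Silver's original argument was metamathematical). The easy direction, the relativization, the use of $\Omega=\{x:\omega_1^x=\omega_1^{CK}\}$, and the Cantor-scheme endgame are all in order.

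There is, however, a genuine gap exactly at the heart of the argument, in your proof of the splitting lemma, and it comes from conflating two different topologies on $X\times X$. The hypothesis ``no split exists'' tells you that $E\cap(U\times U)$ meets every nonempty $\Sigma^1_1$ \emph{rectangle} $V_0\times V_1$, i.e.\ that it is dense for the \emph{product} of the two line Gandy--Harrington topologies. The fact that $E$ is $\Pi^1_1$ makes $E$ closed in the \emph{plane} Gandy--Harrington topology, the one generated by all $\Sigma^1_1$ subsets of $X\times X$. The plane topology is strictly finer than the product topology (a $\Sigma^1_1$ subset of the plane need not be a union of $\Sigma^1_1$ rectangles), so you have density in the coarser topology and closedness in the finer one; ``a dense closed set is everything'' needs both to hold for the \emph{same} topology, and neither upgrade is available --- indeed, closedness of $E$ in the product topology around the relevant points is essentially the statement you are trying to prove. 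A secondary problem is the reduction that precedes the scheme: discarding the countably many classes with nonempty $\tau_{GH}$-interior in $\Omega$ does not leave you with what you need, since a class with empty interior need not be nowhere dense (its closure can still have interior), and removing countably many classes (which are only $\Pi^1_1$ with parameters) does not leave a $\Sigma^1_1$ set, so the inductive hypothesis ``not contained in a single class'' cannot be propagated this way. The standard repair for both problems is effective, not topological: by the First Reflection Theorem, every $\Sigma^1_1$ set contained in a single $E$-class is contained in a $\Delta^1_1$ set with the same property; the union $H$ of all $\Delta^1_1$ sets contained in a single class is a union of $\Delta^1_1$ sets over a $\Pi^1_1$ set of $\Delta^1_1$-codes and is therefore itself $\Pi^1_1$; hence $W=X\setminus H$ is $\Sigma^1_1$, i.e.\ a legitimate basic open set, and the whole Cantor scheme is run inside $W$ (if $W=\emptyset$ one gets countably many classes). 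The splitting lemma is then proved for nonempty $\Sigma^1_1$ sets contained in $W$, by an argument in which density and closedness both refer to the plane topology, using genericity of pairs and reflection; this is where the real work of Harrington's proof lies, and it cannot be replaced by the formal density argument you give.
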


\begin{definition}\label{def:smooth}
	An equivalence relation $E$ on $X$ is \emph{smooth} (or \emph{concretely classifiable}) if $E\leq_B (=_Y)$ for some Polish space $Y$, i.e. there is a Borel map $f:X\rarrow Y$ such that $xEy\iff f(x)=f(y)$.
\end{definition}

Note that this is equivalent to asking that $E\leq_B (=_\R)$, since all Polish spaces are Borel isomorphic. In a way smooth equivalence relations can still be considered \textquotedblleft easy". (Un)fortunately, not all equivalence relations are smooth.

\begin{example}
	A very important non-smooth equivalence relation is the relation $E_0$ on $2^\N$ where 
	\[
	{\bf x}E_0{\bf y} \quad \Leftrightarrow \quad \exists N\in\N, \forall n\geq N: x_n=y_n.
	\]
	To see that $E_0$ is not smooth, assume that we have a Borel reduction $f:2^\N\rarrow [0,1]$ from $E_0$ to $(=_{[0,1]})$. Let $\mu$ be the usual product measure on $2^\N$. Then $f^{-1}([0,\frac{1}{2}])$ and $f^{-1}([\frac{1}{2},1])$ are both tail events, so applying Kolmogorov's zero-one law, we get that either $\mu(f^{-1}([0,\frac{1}{2}]))=1$ or $\mu(f^{-1}([\frac{1}{2},1]))=1$. Continuing cutting intervals in half we get in this way that $f$ is $\mu$-almost everywhere constant, which contradicts the fact that it is a Borel reduction.
\end{example}

The following result, usually referred to as the Glimm-Effros dichotomy, shows that $E_0$ is the \textquotedblleft smallest" among non-smooth Borel equivalence relations.

\begin{theorem}[\cite{HKL90}]\label{thm:E_0smallest}
	If $E$ is a Borel equivalence relation on a Polish space $X$ and $E$ is not smooth, then $E_0\leq_B E$. Moreover one can find a continuous injective Borel reduction $f:2^\N\rarrow X$.
\end{theorem}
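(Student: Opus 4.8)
The plan is to prove this as the Harrington--Kechris--Louveau theorem (the Glimm--Effros dichotomy) using effective descriptive set theory, since no purely classical argument seems available; the engine is the Gandy--Harrington topology. First I would pass to the lightface setting: relativizing to a single real coding both the Polish space $X$ and the Borel set $E$, I may assume $X=\N^\N$ and that $E$ is a $\Delta^1_1$ equivalence relation (it in fact suffices to handle $E$ that is $\Sigma^1_1$). I would also record the effective reformulation of smoothness, namely that $E$ is smooth precisely when the $E$-classes are separated by a countable family of $E$-invariant (relatively $\Delta^1_1$) Borel sets. I would then introduce the Gandy--Harrington topology $\tau$ on $X$, generated by the $\Sigma^1_1$ subsets, and lean on two standard facts: $\Sigma^1_1$-separation, which makes the $\Sigma^1_1$ sets behave like a usable basis; and the key regularity statement that, after restricting to the $\Sigma^1_1$ set $\Omega$ of points lying in a minimal nonempty $\Sigma^1_1$ set, the space $(\Omega,\tau)$ is strong Choquet, hence Baire. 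This is exactly what licenses Cantor-scheme recursions whose ``conditions'' are nonempty $\Sigma^1_1$ sets, guaranteeing that decreasing fusions of conditions remain nonempty.

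The dichotomy itself I would organize around the smooth part. Let $U$ be the union of all $\Sigma^1_1$ sets $A$ such that $E\cap(A\times A)$ is smooth; one checks $U$ is $\Sigma^1_1$ and essentially $E$-invariant. If $U=X$, a reflection/boundedness argument upgrades the ``local'' separating families to a single countable $E$-invariant separating family on all of $X$, so that $E$ is smooth --- the easy alternative, which is excluded by hypothesis. Otherwise $X\setminus U$ is nonempty, and I would extract a nonempty $\Sigma^1_1$ set $Y$ on which $E$ is nowhere smooth, i.e. every $E$-class meets $Y$ in a $\tau$-nowhere-dense set. This ``room to split'' at every scale is precisely the resource the embedding construction consumes.

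On $Y$ I would then build a Cantor scheme $(U_s)_{s\in 2^{<\omega}}$ of nonempty $\Sigma^1_1$ sets together with partial Borel isomorphisms $\varphi_s\colon U_{s0}\rarrow U_{s1}$ whose graphs lie inside $E$, arranged so that $U_{s0},U_{s1}\sub U_s$ and the $\tau$-``diameters'' shrink, so that $f(x):=\bigcap_n U_{x\restriction n}$ defines a continuous injection $f\colon 2^\N\rarrow X$. The flip maps $\varphi_s$ are engineered so that changing finitely many coordinates of $x$ moves $f(x)$ within a single $E$-class, which yields $xE_0y\Rightarrow f(x)\,E\,f(y)$; the nowhere-smoothness on $Y$ lets me split the two children of each node into persistently distinct $E$-classes, which yields $x\not E_0y\Rightarrow f(x)\not E f(y)$. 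Together these say $f$ is a continuous injective reduction of $E_0$ to $E$, giving both $E_0\leq_B E$ and the stated strengthening.

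I expect the main obstacle to be the simultaneous maintenance of these two opposing requirements throughout the recursion: the flip maps $\varphi_s$ must stay inside $E$ (pushing branches together) while the node-splitting must keep distinct branches in distinct $E$-classes even in the limit (pulling them apart). Making both survive the fusion is where the effective input is indispensable --- the strong Choquet property of the Gandy--Harrington topology is what forces every decreasing tower of $\Sigma^1_1$ conditions to have nonempty intersection, and $\Sigma^1_1$-separation is what realizes each splitting by genuine invariant sets. Balancing the shrinking-diameter, coherent-flip, and persistent-separation demands against one another within this framework is the delicate heart of the argument.
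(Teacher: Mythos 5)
This statement is not proved in the paper at all: it is the Harrington--Kechris--Louveau (Glimm--Effros) dichotomy, imported as a black box from \cite{HKL90} and used only through its conclusion (e.g.\ in the proof of Theorem~\ref{thm:nonclass}, where non-smoothness of $\cR(\cU(N)\act\Cartan(N))$ is converted into a Borel reduction $\beta$ of $E_0$). So the only benchmark for your proposal is the published proof, and against that benchmark your outline reconstructs the genuine architecture correctly: lightface relativization; the effective characterization of smoothness via a countable separating family of $E$-invariant $\Delta^1_1$ sets; the Gandy--Harrington topology, whose strong Choquet (hence Baire) property is indeed what keeps fusions of nonempty $\Sigma^1_1$ conditions nonempty; and a Cantor scheme whose limit is a continuous injective reduction of $E_0$. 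Your closing diagnosis of where the difficulty sits, and the remark that no purely classical argument is known (even the graph-theoretic derivations via the $\mathbb{G}_0$-dichotomy bottom out in an effective argument), are both fair.

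As a proof, however, this is a scaffold with the load-bearing steps deferred, and a few of the stated moves would not survive as written. First, your ``smooth part'' $U$ has $\Pi^1_1$ complement, so ``extract a nonempty $\Sigma^1_1$ set $Y\subseteq X\setminus U$'' is not a legal step; the published proof instead works with the $\Sigma^1_1$ relation $W:=E_{\Delta^1_1}\setminus E$, where $x\,E_{\Delta^1_1}\,y$ means no $E$-invariant $\Delta^1_1$ set separates $x$ from $y$, shows by a reflection argument that $W=\emptyset$ forces smoothness, and when $W\neq\emptyset$ runs the recursion simultaneously on $\Sigma^1_1$ subsets of $X$ and of $W$ (the latter supplying both the connectors inside $E$ and the persistent separations inside $\neg E$). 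Second, the set $\Omega$ on which the Gandy--Harrington topology is strong Choquet is the set of low points $\{x:\omega_1^x=\omega_1^{CK}\}$; ``points lying in a minimal nonempty $\Sigma^1_1$ set'' are just the $\Delta^1_1$ points (a minimal nonempty $\Sigma^1_1$ set is a singleton by the effective perfect set theorem), a countable set that cannot carry the construction. Third, the Gandy--Harrington topology is not metrizable, so ``$\tau$-diameters'' do not exist: continuity and injectivity of $f$ come from additionally shrinking each condition inside a ball of a fixed complete metric for the original Polish topology. None of this is a conceptual misdirection---it is exactly the machinery of \cite{HKL90} that your sketch names but does not execute---so the proposal should be read as an accurate map of the known proof rather than a self-contained argument, which is consistent with the paper's own choice to cite the result rather than reprove it.
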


It follows from the above discussion that
\[
1 <_B 2 <_B 3 <_B \dots <_B \N <_B \R <_B E_0
\]
forms an initial segment for $\leq_B$ and moreover $E_0\leq E$ for any non-smooth Borel equivalence relation $E$. Beyond $E_0$ the situation becomes way more complicated and one will for instance not find a unique minimal equivalence relation above $E_0$ (see \cite[Theorem~2]{KL97}). However, one natural thing to consider for general equivalence relations is \emph{classification by countable structures}.

\subsubsection{Classification by countable structures}\label{sec:classctbstruct}

Intuitively, an equivalence relation is classifiable by countable structures if we can assign complete invariants built out of countable (or \textquotedblleft discrete") structures of some given type, e.g. groups, graphs, fields, etc. More concretely, we have the following.

\begin{definition}
	A \textit{countable signature} is a countable family $L=\{f_i\}_{i\in I} \cup \{R_j\}_{j\in J}$ of function symbols $f_i$ and relation symbols $R_j$. We denote by $n_i\geq 0$, resp. $m_j\geq 1$, the arity of $f_i$, resp. $R_j$.
	
	An $L$-\emph{structure} is given by
	\[
	\cA:=(A,\{f_i^\cA\}_{i\in I}, \{R_j^\cA\}_{j\in J}),
	\]
	where $A$ is a nonempty set, $f_i^\cA:A^{n_i}\rarrow A$ are functions, and $R_j^\cA\sub A^{m_j}$ are relations.
\end{definition}

\begin{example}
	Consider $L=\{\cdot,1\}$, where $\cdot$ is a binary and $1$ is a nullary function symbol. Then a group is any $L$-structure $(G,\cdot^G,1^G)$ satisfying the group axioms. Similarly, using other signatures, one can describe rings, fields, graphs, etc.
\end{example}

When considering only countable structures, we can of course always take $A=\N$ up to isomorphism. This gives rise to the following.

\begin{definition}
	Given a countable signature $L$, we consider the \emph{space of countable $L$-structures}
	\[
	X_L:= \prod_{i\in I}\N^{(\N^{n_i})} \times \prod_{j\in J} 2^{(\N^{m_j})}.
	\]
	Putting the discrete topologies on $\N$ and $2$, $X_L$ becomes a Polish space for the product topology.
\end{definition}

Now let $S_\infty$ be the group of \textit{all} permutations of $\N$. Then we have an obvious action $S_\infty\act X_L$, called the \emph{logic action}. One easily checks that this action induces the equivalence relation of isomorphism in $X_L$, i.e. for $\cA,\cB\in X_L$ we have $\cA\cong\cB\,\Leftrightarrow\,\exists g\in S_\infty: g\cdot \cA=\cB$. We will denote by $\cong_L$ the equivalence relation of isomorphism on $X_L$.

\begin{definition}
	Let $E$ be an equivalence relation on a standard Borel space $X$. Then we say that $E$ is \emph{classifiable by countable structures} if there exists a countable signature $L$ such that $E\leq_B (\cong_L)$.
\end{definition}

\begin{example}
	\begin{itemize}
		\item If $E$ is smooth, then $E$ admits classification by countable structures.
		\item (\cite{Kec92}) If $G$ is a Polish locally compact group and $X$ is a Borel $G$-space, then the orbit equivalence relation of $G\act X$ admits classification by countable structures.
	\end{itemize}
\end{example}

\subsubsection{Turbulence and generic ergodicity}\label{sec:turb}

The basic method for showing that some equivalence relation is not classifiable by countable structures was developed by Hjorth (see \cite{Hjo00}) and is called \emph{turbulence}. In the following, let $G$ be a Polish group acting continuously on a Polish space $X$.

\begin{definition}
	Let $U\sub X$ be a nonempty open set and $V\sub G$ be an open neighborhood of $1\in G$. For $x\in U$ we define
	\begin{align*}
	\cO(x,U,V):= \{y\in U\mid &\exists k\in\N, x_0,x_1,\dots,x_k\in U, g_0,\dots,g_{k-1}\in V\\ &\text{ such that } x=x_0, y=x_k \text{ and }\forall i<k:x_{i+1}=g_ix_i\}.
	\end{align*}
	We call $\cO(x,U,V)$ the \emph{local} $U,V$-\emph{orbit of} $x$.
\end{definition}

Recall that a subspace of a Polish space is called \emph{meager} if it is disjoint from a dense $G_\del$ set and \emph{comeager} if it includes one. We can think of these as being \textquotedblleft small", respectively \textquotedblleft large", sets.

\begin{definition}
	The action $G\act X$ is \emph{turbulent} if
	\begin{enumerate}[label=(\roman*)]
		\item every orbit is dense,
		\item every orbit is meager,
		\item for every $x\in X$, for every $U\sub X$ open and every $V\sub G$ open with $x\in U$, $1\in V$: $\overline{\cO(x,U,V)}$ has nonempty interior.
	\end{enumerate}
\end{definition}

Note that these conditions give rise to \textquotedblleft turbulence" on different scales. Orbits being dense is a global phenomenon, telling us we can get to every region of our space by applying group elements to any chosen point in our space. Moreover, one can easily show (see Example~\ref{eg:denseorbits} below) that an action satisfying (i) and (ii) from the above definition is already \textquotedblleft turbulent enough", in the sense that it cannot be smooth. The main condition in the above definition is (iii) though, which says that even on a small scale, both in the group and in the space, the action exhibits turbulent behaviour.

\begin{example}\label{eg:l1}
	Consider $(\ell^1,+)$ with the usual $1$-norm ($\norm{g}_1=\sum_i\abs{g_i}$) acting by translation on $(\R^\N,+)$ with the product topology. It is not hard to see that this action is turbulent (see also \cite[Proposition~3.25]{Hjo00}). Every orbit is dense and meager because $\ell^1$ is dense and meager. To get the third condition, fix $x\in \R^\N$, $U\sub\R^\N$ open containing $x$ and $V\sub\ell^1$ an open neighborhood of the identity. By shrinking $U$ and $V$ if necessary, we can assume that for some $\eps>0, l\in\N$ we have
	\begin{align*}
	U &= \{z\in \R^\N \mid \forall i<l: \abs{z_i-x_i}<\eps\},\\
	V &= \{z\in \ell^1 \mid \norm{z}_1<\eps\}.
	\end{align*}
	Now let $U_0\sub U$ be any open set, then it suffices to find some $z\in \cO(x,U,V)\cap U_0$. Using the density of the orbit of $x$, we can find $g\in\ell^1$ with $g\cdot x\in U_0$. Choose now $k\in\N$ large enough so that $\norm{g}_1/k < \eps$ and put $h=g/k$, i.e. $h_i=g_i/k$ for every $i\in\N$. Then $h\in V$ and we can put $x_0:=x, x_{j+1} := h\cdot x_j$. By convexity of $U$, we have that $x_j\in U$ for every $0\leq j\leq k$.	Hence $x_k=g\cdot x\in \cO(x,U,V)\cap U_0$ and the result follows.
\end{example}

Next we consider what is called \emph{generic ergodicity}. For this we first need the obvious notion of a homomorphism between equivalence relations.

\begin{definition}\label{def:hom}
	Let $E$ and $F$ be equivalence relations on Polish spaces $X$ and $Y$. A \emph{homomorphism} from $E$ to $F$ is a Borel function $f:X\rarrow Y$ such that $xEy\Rightarrow f(x)Ff(y)$.
\end{definition}

Given $E$ and $F$ equivalence relations on Polish spaces $X$ and $Y$, we will write $\Hom(E,F)$ for the space of homomorphisms from $E$ to $F$.

\begin{definition}
	For $E,F$ as above, we say that $E$ is \emph{generically $F$-ergodic} if for every homomorphism $f$ between $E$ and $F$, there is a comeager set $A\sub X$ such that $f$ maps $A$ to a single $F$-class.
\end{definition}

\begin{example}\label{eg:denseorbits}
	If $E$ denotes the orbit equivalence relation for some continuous action of a Polish group $G$ on a Polish space $X$ with a dense orbit, then $E$ is easily seen to be generically $(=_\R)$-ergodic, see for instance \cite[Example~2.17]{KT-D13}. In particular, if every orbit is also meager, then $E$ is not smooth.
\end{example}

The following important result of Hjorth tells us that turbulence is an obstruction for classification by countable structures.

\begin{theorem}[\cite{Hjo00}]\label{thm:Hjo00}
	If a Polish group $G$ acts turbulently on a Polish space $X$, then $\cR(G\act X)$ is generically $\cong_L$-ergodic for any countable signature $L$. In particular, since turbulent actions have meager orbits by definition, $\cR(G\act X)$ does not admit classification by countable structures.
\end{theorem}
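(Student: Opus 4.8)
The plan is to prove the two assertions separately: the ``In particular'' clause is a short formal consequence of the first assertion together with turbulence condition (ii), so I would dispose of it first and then concentrate on generic $\cong_L$-ergodicity, which is the substantive content (this is the heart of Hjorth's theory, and for the full technical execution I would ultimately appeal to \cite{Hjo00}).

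For the easy clause, suppose toward a contradiction that $\cR(G\act X)$ admits classification by countable structures, witnessed by a Borel reduction $f\colon X\rarrow X_L$ of $\cR(G\act X)$ to $\cong_L$. Since any reduction is in particular a homomorphism, the first assertion yields a comeager set $A\sub X$ with $f(A)$ contained in a single $\cong_L$-class. Then for all $x,y\in A$ we have $f(x)\cong_L f(y)$, and as $f$ is a reduction this forces $x\,\cR(G\act X)\,y$; hence $A$ is contained in a single $G$-orbit. But every orbit is meager by turbulence, while $A$ is comeager, a contradiction. So no such reduction can exist.

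For the main assertion, let $f\colon X\rarrow X_L$ be a homomorphism from $E:=\cR(G\act X)$ to $\cong_L$; I must produce a comeager set on which $f$ is constant modulo $\cong_L$. The structural feature I would exploit is that $\cong_L$ is exactly the orbit equivalence relation of the logic action $S_\infty\act X_L$, and that $S_\infty$ admits a neighborhood basis at the identity consisting of the \emph{open subgroups} $W_n:=\{\sigma\in S_\infty : \sigma(i)=i \text{ for all } i<n\}$. This is precisely the feature that distinguishes $S_\infty$-actions from arbitrary Polish-group actions, and turbulence is designed to defeat it, since the source has no such small-scale rigidity. Concretely, I would first replace $f$ by a map continuous on a comeager set (every Borel map into a Polish space is so), and then, for each fixed $n$, use turbulence condition (iii) to analyze the behaviour of $f$ along local orbits. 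Moving $x$ by a small $g\in V$ keeps $f(x)$ inside its $\cong_L$-class (homomorphism property) and, by continuity, moves it only slightly; since $W_n\cdot f(x)$ is open, one shows that a whole local orbit $\cO(x,U,V)$ is mapped into the closure $\overline{W_n\cdot f(x)}$, and because local-orbit closures have nonempty interior, density of orbits (condition (i)) lets one chain these neighbourhoods across the space. A Baire-category argument then produces, for each $n$, a comeager set $A_n$ on which any two values of $f$ lie in a common set of the form $\overline{W_n\cdot y}$.

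The heart of the matter --- and the step I expect to be the main obstacle --- is the passage from this family of ``local'' (mod-$W_n$) constancy statements to genuine global constancy modulo $\cong_L$ on the single comeager set $A:=\bigcap_n A_n$. Two difficulties must be handled together. First, the chaining in the previous paragraph must be made uniform enough in $n$ to survive the countable intersection, so that on $A$ one really has, for \emph{every} $n$ simultaneously, all values of $f$ inside one set $\overline{W_n\cdot y}$. Second, one must show that this nested family of orbit-closure conditions genuinely collapses to a single $S_\infty$-orbit: since the $W_n$ shrink to $\{e\}$, an element lying in $\overline{W_n\cdot y}$ for all $n$ is forced, by a back-and-forth argument specific to the logic action together with continuity of the $S_\infty$-action, to be $\cong_L$-equivalent to $y$. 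Combining these gives a fixed $y_0$ with $f(A)\sub[y_0]_{\cong_L}$, which is the desired generic $\cong_L$-ergodicity. The delicate bookkeeping in both difficulties --- keeping the local-orbit estimates uniform in $n$ and verifying the orbit-closure collapse --- is exactly where Hjorth's local-orbit formalism does the work, and I would cite \cite{Hjo00} for that execution while presenting the conceptual skeleton above.
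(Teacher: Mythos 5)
First, note what the paper itself does with this statement: Theorem~\ref{thm:Hjo00} is imported verbatim from \cite{Hjo00} with no proof given, so your decision to cite Hjorth for the substantive assertion matches the paper's own treatment exactly, and your derivation of the ``In particular'' clause is correct: a Borel reduction is in particular a homomorphism, so generic $\cong_L$-ergodicity puts a comeager set $A$ inside a single $\cong_L$-class, the reduction property then forces $A$ into a single $G$-orbit, and a single orbit cannot be both meager (turbulence condition (ii)) and comeager (Baire category). Up to that point the proposal is fine.

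The problem is with the skeleton you sketch for the main assertion: as stated, it would not flesh out into a proof, because its intermediate target is strictly too strong --- in fact false. You propose to find a comeager $A$ and a \emph{fixed} $y_0\in X_L$ such that $f(A)\sub\overline{W_n\cdot y_0}$ for every $n$ simultaneously, and then to ``collapse'' $\bigcap_n\overline{W_n\cdot y_0}$ to the orbit of $y_0$. But for the logic action this intersection is just $\{y_0\}$: any basic neighborhood of a point $z$ constrains only coordinates indexed by tuples from $\{0,\dots,m-1\}$ for some $m$, and every $\sigma\in W_m$ fixes those coordinates, so agreeing with some $\sigma\cdot y_0$ on that fragment means agreeing with $y_0$ itself; letting $m$ grow gives $z=y_0$. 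Hence your two steps combined would prove that every homomorphism $f$ is generically \emph{constant}, not merely constant modulo $\cong_L$ --- and that is false: take $L=\{R\}$ with $R$ unary, a nonempty open $U\subsetneq X$ that is not dense, and let $f(x)$ be the structure with $R$ interpreted as $\{0\}$ or $\{1\}$ according to whether $x\in U$ or not; all values are isomorphic, so $f$ is automatically a homomorphism, yet $f$ is constant on no comeager set since a comeager set meets both $U$ and the complement of $\overline{U}$. So the ``collapse'' step you flag as the second difficulty is actually trivial, while the uniformization you flag as the first difficulty is impossible. Hjorth's actual argument cannot hold the reference structure fixed: it compares $f(x)$ and $f(x')$ through conditions of the form $f(x')\in\overline{W_k\sigma_k\cdot f(x)}$ with group translates $\sigma_k$ constructed recursively, and a genuine back-and-forth (imposing conditions on both $\sigma_k$ and $\sigma_k^{-1}$) is needed to make the $\sigma_k$ converge in $S_\infty$, since nested cosets $\sigma_k W_k$ may have empty intersection --- $S_\infty$ is not complete in its left uniformity. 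Since you defer to \cite{Hjo00} precisely at this point, nothing you formally claim is wrong, but the roadmap you give is not the one that works, and anyone trying to execute it would hit a dead end at your ``first difficulty''.
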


\subsubsection{A non-classifiability criterion}

The following criterion follows easily from Example~\ref{eg:l1} and Theorem~\ref{thm:Hjo00}.

\begin{proposition}\label{prop:crit}
	If $E$ is an equivalence relation on a Polish space $X$ such that there exists a Borel map $f:(0,1)^\N\rarrow X$ satisfying
	\begin{enumerate}
		\item $f({\bf x})Ef({\bf y})$ whenever ${\bf x}-{\bf y}\in \ell^1$, and
		\item there is no comeager set $A$ in $(0,1)^\N$ which is mapped into a single $E$-class by $f$,
	\end{enumerate}
	then $E$ is not classifiable by countable structures.
\end{proposition}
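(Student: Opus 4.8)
The plan is to argue by contradiction, converting the hypotheses into a statement about homomorphisms into a logic action and then applying generic ergodicity. Suppose $E$ were classifiable by countable structures, so that $E\leq_B(\cong_L)$ for some countable signature $L$, witnessed by a Borel reduction $g:X\rarrow X_L$. Then $g\circ f:(0,1)^\N\rarrow X_L$ is a Borel homomorphism from the equivalence relation $E_{\ell^1}$ on $(0,1)^\N$ (defined by ${\bf x}\,E_{\ell^1}\,{\bf y}\iff{\bf x}-{\bf y}\in\ell^1$) to $\cong_L$: by condition $(1)$ we have $f({\bf x})Ef({\bf y})$ whenever ${\bf x}-{\bf y}\in\ell^1$, whence $g(f({\bf x}))\cong_L g(f({\bf y}))$. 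The goal is to show that \emph{every} such homomorphism is trivial on a comeager set, which contradicts condition $(2)$: if $g\circ f$ maps a comeager set $A$ into a single $\cong_L$-class, then, $g$ being a reduction, $f$ maps $A$ into a single $E$-class.

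The only real obstacle is a domain mismatch: the turbulent action of Example~\ref{eg:l1} lives on $\R^\N$, while our homomorphism is defined on $(0,1)^\N$, and $\ell^1$ does not act on $(0,1)^\N$ by translation. I would bridge this with a coordinatewise Lipschitz homeomorphism. Fix a Lipschitz homeomorphism $\psi:\R\rarrow(0,1)$ (for instance the logistic function, whose derivative is bounded by $\tfrac14$) and let $\Psi:\R^\N\rarrow(0,1)^\N$ apply $\psi$ in each coordinate; this is a homeomorphism for the product topologies. The key point is that $\Psi$ sends $\ell^1$-related points to $\ell^1$-related points: if ${\bf s}-{\bf t}\in\ell^1$ then $\sum_i|\psi(s_i)-\psi(t_i)|\leq\tfrac14\sum_i|s_i-t_i|<\infty$. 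Consequently $\tilde f:=f\circ\Psi:\R^\N\rarrow X$ is a Borel homomorphism from $\cR(\ell^1\act\R^\N)$ to $E$, and $g\circ\tilde f$ is a homomorphism from $\cR(\ell^1\act\R^\N)$ to $\cong_L$. Note that we only need this forward implication; $\Psi$ need not respect $\ell^1$-equivalence in the other direction, and indeed its inverse (the logit) is not Lipschitz.

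Finally I would invoke the two cited results. By Example~\ref{eg:l1} the translation action $\ell^1\act\R^\N$ is turbulent, so by Theorem~\ref{thm:Hjo00} its orbit equivalence relation is generically $\cong_L$-ergodic. Applied to $g\circ\tilde f$, this yields a comeager set $\tilde A\sub\R^\N$ that $g\circ\tilde f$ maps into a single $\cong_L$-class; since $g$ is a reduction, $\tilde f$ maps $\tilde A$ into a single $E$-class. Setting $A:=\Psi(\tilde A)$, which is comeager in $(0,1)^\N$ because $\Psi$ is a homeomorphism, and using $f\circ\Psi=\tilde f$, we conclude that $f$ maps the comeager set $A$ into a single $E$-class, contradicting condition $(2)$. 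Hence $E\not\leq_B(\cong_L)$ for every countable signature $L$, i.e. $E$ is not classifiable by countable structures. The main---and essentially only---technical point is the passage from $(0,1)^\N$ to $\R^\N$ via a Lipschitz reparametrization preserving both $\ell^1$-equivalence and comeagerness; the remainder is routine bookkeeping with reductions, homomorphisms, and generic ergodicity.
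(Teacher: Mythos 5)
Your proof is correct and follows essentially the same route as the paper: both arguments transfer the generic $\cong_L$-ergodicity of the turbulent action $\ell^1\act\R^\N$ (Example~\ref{eg:l1} plus Theorem~\ref{thm:Hjo00}) over to the $\ell^1$-equivalence relation on $(0,1)^\N$ via a coordinatewise Lipschitz homeomorphism $\R^\N\rarrow(0,1)^\N$ that preserves $\ell^1$-relatedness and comeagerness, and then contradict condition $(2)$. The only cosmetic difference is the choice of bridging map --- you use the logistic function where the paper uses $t\mapsto\tfrac{1}{2}\bigl(\tfrac{t}{|t|+1}+1\bigr)$ --- and the paper packages the bookkeeping into two abstract claims about generic ergodicity rather than inlining the contradiction as you do.
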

\begin{proof}
	(This proof follows the same lines of reasoning as \cite[Criterion 3.3]{Lup13}.)
	\begin{Claim}
		Suppose $F,G,R$ are equivalence relations on Polish spaces $Y_1,Y_2,Z$ respectively such that $G$ is generically $R$-ergodic. If there exists a homomorphism $g:Y_2\rarrow Y_1$ from $G$ to $F$ such that $g(C)$ is comeager in $Y_1$ for all comeager $C\sub Y_2$, then $F$ is generically $R$-ergodic.
	\end{Claim}
	\begin{proofofclaim}
		Suppose $h:Y_1\rarrow Z$ is a homomorphism from $F$ to $R$. Then $h\circ g:Y_2\rarrow Z$ is a homomorphism from $G$ to $R$. Since $G$ is generically $R$-ergodic, there exists a comeager set $C\sub Y_2$ that is mapped onto one $R$-class. Hence $g(C)$ is a comeager subset of $Y_1$ that is mapped onto a single $R$-class by $h$.
	\end{proofofclaim}

	\begin{Claim}
		Suppose $E$ and $F$ are equivalence relations on $X$ and $Y$ respectively, and $F$ is generically $\cong_L$-ergodic for every countable signature $L$. If there is a homomorphism $f:Y\rarrow X$ from $F$ to $E$ such that there is no comeager set in $Y$ which is mapped onto a single $E$-class, then $E$ is not classifiable by countable structures.
	\end{Claim}
	\begin{proofofclaim}
		Let $L$ be a countable signature and assume that we have a Borel reduction $g:X\rarrow X_L$ from $E$ to $\cong_L$. Then $g\circ f:Y\rarrow X_L$ is a homomorphism from $F$ to $\cong_L$ and since $F$ is generically $\cong_L$-ergodic, it follows that there is a comeager set $C\sub Y$ which is mapped to a single $\cong_L$-class. Since $g$ is a Borel reduction, this implies that $f(x)Ef(y)$ for all $x,y\in C$, contradicting the fact that no $E$-class has a comeager preimage.
	\end{proofofclaim}
	
	\vspace{5pt}
	
	Let now $E$ be as in the proposition, $G$ the relation of equivalence modulo $\ell^1$ on $\R^\N$, and $F$ the relation of equivalence modulo $\ell^1$ on $(0,1)^\N$. Consider the functions
	\begin{align*}
	g'&:\R^\N\rarrow (-1,1)^\N: (t_n)_n\mapsto \left(\frac{t_n}{\abs{t_n}+1}\right)_n,\\
	\varphi&:(-1,1)\rarrow (0,1): x\mapsto \frac{1}{2}x+\frac{1}{2},\\
	g&:=\varphi^\N\circ g':\R^\N\rarrow (0,1)^\N.
	\end{align*}
	It is easy to see that $g$ satisfies the conditions of Claim~1. Since $G$ is generically $\cong_L$-ergodic for any countable signature $L$ by Example~\ref{eg:l1} and Theorem~\ref{thm:Hjo00}, the same holds for $F$. The result then follows from Claim~2.
\end{proof}

\subsection{The standard Borel space of von Neumann algebras}

In \cite{Eff65} Effros showed that there is a standard Borel structure on the space of von Neumann algebras $\vNa(\cH)$ on a given separable Hilbert space $\cH$. Moreover it follows from his results that the set of von Neumann subalgebras $\vNa(M)$ of a given separable \II factor $(M,\tau)$ is a standard Borel space. In this case one can check that its standard Borel structure is given by the smallest $\sigma$-algebra such that
\[
A\mapsto \tau(E_A(x)y)
\]
is measurable for all $x,y\in M$. Speelman and Vaes then showed the following for the space of Cartan subalgebras $\Cartan(M):=\{A\sub M\mid A \text{ is a Cartan subalgebra of } M\}$.

\begin{proposition}[{\cite[Proposition~12]{SV11}}]\label{prop:SV-Cartan}
	In the above setting the following hold.
	\begin{itemize}
		\item $\Cartan(M)\sub\vNa(M)$ is a Borel set and hence a standard Borel space.
		\item The equivalence relation of unitary conjugacy on $\Cartan(M)$ is Borel (i.e. as a subset of $\Cartan(M)\times\Cartan(M)$).
		\item The equivalence relation of conjugacy by an automorphism on $\Cartan(M)$ is analytic.
	\end{itemize}
\end{proposition}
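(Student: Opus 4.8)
The plan is to fix, once and for all, a countable sequence $(x_n)_n$ that is $\norm{\cdot}_2$-dense in the operator-norm unit ball of $M$, and to extract from the Effros--Borel structure on $\vNa(M)$ the single input that drives everything: the map $A\mapsto E_A(x_n)\in L^2(M)$ is Borel for each $n$ (this is essentially the definition of the Borel structure, since $\tau(E_A(x_n)y)=\ip{E_A(x_n)}{y^*}$), and hence $A\mapsto e_A\in B(L^2(M))$ is Borel for the weak Borel structure on the unit ball of $B(L^2(M))$. Every remaining condition must then be rephrased as a countable family of Borel constraints on the vectors $E_A(x_n)$. The one recurring nuisance is that these constraints involve products of elements of $A$, which are only available as $L^2$-vectors; I would handle this throughout with the modular conjugation $J$, rewriting any expression of the form $\tau(E_A(x_m)E_A(x_n)x_l)$ as an inner product $\ip{E_A(x_n)}{J(x_l E_A(x_m))}$ of two Borel $L^2(M)$-valued functions of $A$ (here $x_l$ is fixed, so left multiplication by $x_l$ and $J$ are fixed bounded, resp.\ anti-unitary, operators). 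With that device the maximal-abelian condition is Borel: ``$A$ abelian'' becomes the countable family $\ip{E_A(x_n)}{J(x_l E_A(x_m))}=\ip{E_A(x_m)}{J(x_l E_A(x_n))}$ over $l,m,n$, and more systematically $A\mapsto e_{A'\cap M}$ is Borel because $A'\cap M$ is the common kernel of the Borel family of operators $L_{E_A(x_n)}-R_{E_A(x_n)}$; then ``$A$ maximal abelian'' is simply the Borel condition $e_A=e_{A'\cap M}$.

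The delicate part is regularity. Here I would read $\cN_M(A)$ as the stabilizer of $A$ under the Borel conjugation action $\cU(M)\act\vNa(M)$, $(u,A)\mapsto uAu^*$; membership ``$u\in\cN_M(A)$'' is jointly Borel in $(u,A)$ (it says $e_A(u E_A(x_n)u^*)=uE_A(x_n)u^*$ together with the symmetric statement for $u^*$, all Borel via the above). The key step is that the stabilizer map $A\mapsto \cN_M(A)$ is Borel into the Effros--Borel space $F(\cU(M))$ of closed subsets; granting this, Kuratowski--Ryll-Nardzewski furnishes Borel selectors $v_k$ with $(v_k(A))_k$ dense in $\cN_M(A)$ (nonempty, as it contains $\cU(A)$). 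Since $\cN_M(A)$ is a group, $\cN_M(A)''=M$ holds if and only if the $\norm{\cdot}_2$-closed linear span of $\cN_M(A)$ is all of $L^2(M)$, so ``$A$ regular'' is equivalent to the Borel condition that for every $n$ the distance in $\norm{\cdot}_2$ from $x_n$ to the rational linear span of $\{v_k(A)\}$ is zero --- a countable infimum of Borel functions. Intersecting with the maximal-abelian condition shows $\Cartan(M)\sub\vNa(M)$ is Borel, hence a standard Borel space. I expect the Borel measurability of the stabilizer map (equivalently, that the relevant projections stay Borel despite having only closed, non-$\sigma$-compact sections) to be the main obstacle: this is exactly where one must invoke the descriptive set theory of Polish group actions rather than a naive projection argument.

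For the two equivalence relations the clean organizing principle is the distinction between $\cU(M)$ and $\Aut(M)$ as topological groups. Unitary conjugacy $\sim_u$ is the orbit equivalence relation of the conjugation action of $\cU(M)$, and $(\cU(M),\norm{\cdot}_2)$ carries a complete bi-invariant metric (a $\norm{\cdot}_2$-Cauchy sequence of unitaries has a unitary $L^2$-limit, since the operator-norm unit ball is $\norm{\cdot}_2$-closed), so $\cU(M)$ is a cli (indeed TSI) Polish group; orbit equivalence relations of cli Polish group actions are Borel, in contrast to the merely analytic orbit relations of general Polish group actions, and this gives the second bullet. By contrast, conjugacy by an automorphism $\sim_a$ is the orbit equivalence relation of the Borel action of $\Aut(M)$, which is always analytic; here $\Aut(M)$ fails to be cli --- its natural left-invariant metric is incomplete because a Cauchy sequence of automorphisms can converge pointwise in $\norm{\cdot}_2$ to a non-surjective endomorphism --- so the same upgrade to Borel is unavailable, and analytic is all one expects, and all that is claimed.
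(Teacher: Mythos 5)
The paper never proves this proposition at all: it is imported wholesale from \cite[Proposition~12]{SV11}, so your proposal has to be measured against the Speelman--Vaes argument (parts of which the paper re-quotes later). Your general setup is fine and does match how that proof begins: the Borel structure generated by $A\mapsto E_A(x_n)$, products of conditional expectations handled via $J$ (or via joint $\norm{\cdot}_2$-continuity of multiplication on bounded sets), maximal abelianness via $A\mapsto e_{A'\cap M}$ and kernel projections, and the easy third bullet (a projection along the Polish group $\Aut(M)$ of a Borel set is analytic; and by the Remark following the proposition, quoting \cite[Theorem~2]{SV11}, analytic is optimal there). The problem is that the two substantive bullets are precisely where your proposal stops being a proof. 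For the first bullet you correctly isolate the crux --- Effros--Borelness of $A\mapsto\cN_M(A)$, after which Kuratowski--Ryll-Nardzewski selectors and the density test settle regularity --- and then you do not prove it; you only ``expect'' it to follow from ``the descriptive set theory of Polish group actions''. No such general theorem exists. Stabilizer maps of Borel Polish group actions need not be Borel: for the logic action of $S_\infty$, the stabilizer of a structure is its automorphism group, and Borelness of that map into the Effros space would make the set of countable graphs admitting a nontrivial automorphism Borel, whereas it is $\Sigma^1_1$-complete. Nor does joint Borelness of the membership relation plus closedness of sections help: $\cU(M)$ is not $\sigma$-compact, and Borel sets with closed sections in such spaces can have properly analytic projections --- that is exactly what analytic sets are. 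So the one step you defer is exactly the content of the cited proof; indeed the present paper quotes precisely this fact from \cite{SV11} inside the proof of Lemma~\ref{lem:Cartaniff}.

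The second bullet rests on a false principle. You are right that $(\cU(M),\norm{\cdot}_2)$ is a TSI, hence cli, Polish group, but it is not true that Borel actions of cli (or even TSI) Polish groups induce Borel orbit equivalence relations. Not every TSI group is tame: by the work of Solecki, and later Malicki, on products of countable discrete abelian groups, there exist such products --- TSI groups, even carrying complete bi-invariant metrics --- with Borel actions whose orbit equivalence relations are analytic and not Borel. (A sanity check on your own principle: it would apply verbatim to the conjugation action of $\cU(M)$ on all of $\vNa(M)$, where no such Borelness is claimed or known.) What actually makes unitary conjugacy Borel on $\Cartan(M)$ is operator-algebraic input specific to Cartan subalgebras. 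By \cite[Theorem~A.1]{Po01}, two Cartan subalgebras of a \II factor satisfy $A\sim_u B$ if and only if $A\emb_M B$, and intertwining is a Borel condition because of its quantitative characterization (Theorem~\ref{thm:Popa}): $A\emb_M B$ if and only if there are a finite set $F$ from the fixed dense sequence and a rational $\delta>0$ with $\sum_{x,y\in F}\norm{E_B(x^*uy)}_2^2\geq\delta$ for every $u$ in a countable $\norm{\cdot}_2$-dense subset of $\cU(A)$; such a subset can be chosen Borel in $A$ precisely because $A$ is abelian (exponentiate the self-adjoint parts of the $E_A(x_n)$). Alternatively, one can invoke \cite[Theorem~II.10]{FM77}: $A\sim_u B$ if and only if $\diag(A,B)$ is a Cartan subalgebra of $M_2(M)$, which reduces the second bullet to the first. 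Either way, some Cartan-specific operator algebra must enter, and it is absent from your proposal.
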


\begin{remark}
	In \cite[Theorem~2]{SV11} the authors construct a \II factor for which the equivalence relation of conjugacy by an automorphism on Cartan subalgebras is completely analytic and hence not Borel.
\end{remark}

\subsection{Almost invariant sequences}\label{sec:aiseq}

We briefly recall the notions and some of the results from \cite{JS87} because we will use them crucially in section~\ref{sec:nonclass}. Let $\Gam\act (X,\mu)$ be an ergodic pmp action of a countable group $\Gam$ on a probability space $(X,\mu)$. A sequence $(A_n)_{n\geq 1}$ is called \textit{almost invariant} (or asymptotically invariant) if
\[
\lim_n \mu(A_n\Del gA_n) = 0
\]
for every $g\in\Gam$. An almost invariant sequence is \textit{trivial} if $\lim_n \mu(A_n)(1-\mu(A_n))=0$. The action $\Gam\act X$ is called \textit{strongly ergodic} if every almost invariant sequence is trivial. Given two actions of countable groups on probability spaces $\Gam\act (X,\mu)$ and $\Lam\act (Y,\nu)$, we say that a measurable function $\theta:X\rarrow Y$ is a \textit{factor map} if it is measure preserving (so in particular essentially onto) and if $\theta(\Gam x)=\Lam\theta(x)$ for almost every $x\in X$. The above notions are linked in the following way.

\begin{theorem}[{\cite[Theorem~2.1]{JS87}}]\label{thm:JS}
	Let $\Gam\act (X,\mu)$ be an ergodic pmp action of a countable group $\Gam$ on a probability space $(X,\mu)$. Then the following are equivalent.
	\begin{enumerate}
		\item The action $\Gam\act X$ is not strongly ergodic,
		\item There exists an ergodic pmp action of $\Z$ on a probability space $(Y,\nu)$ and a factor map $\theta:X\rarrow Y$ for the actions of $\Gam$ and $\Z$ respectively.
	\end{enumerate}
\end{theorem}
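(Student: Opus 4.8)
The plan is to prove the two implications separately, treating $(2)\Rightarrow(1)$ as routine and $(1)\Rightarrow(2)$ as the heart of the matter. Throughout I read condition $(2)$ as producing an \emph{aperiodic} (equivalently, non-atomic) ergodic pmp $\Z$-action, since a finite ergodic $\Z$-action is itself strongly ergodic and so carries no nontrivial content.

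For $(2)\Rightarrow(1)$, suppose $\theta:X\rarrow Y$ is a factor map onto an aperiodic ergodic pmp action of $\Z$ on $(Y,\nu)$ generated by a transformation $T$. First I would record that such a $\Z$-action is never strongly ergodic: a Rokhlin tower argument produces a sequence $(B_n)$ with $\nu(B_n)\rarrow\tfrac12$ and $\nu(B_n\Del TB_n)\rarrow 0$. I then pull this back. Writing $\theta(g\inv x)=T^{c_g(x)}\theta(x)$ for a measurable integer cocycle $c_g$ and setting $A_n:=\theta\inv(B_n)$, measure preservation gives $\mu(A_n)=\nu(B_n)\rarrow\tfrac12$, so $(A_n)$ is nontrivial. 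To check almost invariance, fix $g\in\Gam$ and split $X$ according to the value $c_g=k$: on $\{c_g=k\}$ one has $\een_{A_n\Del gA_n}(x)=\een_{B_n\Del T^{-k}B_n}(\theta(x))$, whence
\[
\mu(A_n\Del gA_n)\le\sum_{k\in\Z}\min\bigl(\mu\{c_g=k\},\ \nu(B_n\Del T^{-k}B_n)\bigr),
\]
and $\nu(B_n\Del T^{-k}B_n)\le|k|\,\nu(B_n\Del TB_n)$ by invariance and telescoping. Cutting the sum off at a finite range carrying all but $\eps$ of the mass of $c_g$ and then letting $n\rarrow\infty$ shows $\mu(A_n\Del gA_n)\rarrow 0$.

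For $(1)\Rightarrow(2)$, start from a nontrivial almost invariant sequence $(A_n)$, which after passing to a subsequence satisfies $\mu(A_n)\rarrow c\in(0,1)$. The key first move is to upgrade ``almost invariant'' to a pointwise, orbitwise statement. Enumerating $\Gam=\{g_1,g_2,\dots\}$ and passing to a further subsequence diagonally, I can arrange that $\sum_n\mu(A_n\Del g_iA_n)<\infty$ for every $i$. By Borel--Cantelli, for each fixed $g$ and almost every $x$ one has $\een_{A_n}(x)=\een_{A_n}(g\inv x)$ for all but finitely many $n$, so the measure preserving map $\Phi:X\rarrow\{0,1\}^\N$, $\Phi(x):=(\een_{A_n}(x))_n$, sends each $\Gam$-orbit into a single $E_0$-class; that is, $\Phi$ is a homomorphism from $\cR(\Gam\act X)$ to $E_0$. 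The next step realizes its image as a $\Z$-action. Let $\lambda:=\Phi_*\mu$ and let $S$ be the equivalence relation on $Y:=\{0,1\}^\N$ generated by $\{(\Phi(x),\Phi(x')):x'\in\Gam x\}$, a subrelation of $E_0$. Since $\Phi$ and each $\Phi\circ\al_g$ push $\mu$ forward to $\lambda$, both marginals of every generating correspondence equal $\lambda$, so $S$ is pmp; it is ergodic as the image of the ergodic relation $\cR(\Gam\act X)$. Writing $E_0=\bigcup_k E_0^{(k)}$ with $E_0^{(k)}$ the finite-class relation ``agree from coordinate $k$ on'', the subrelations $S\cap E_0^{(k)}$ have finite classes and increase to $S$, so $S$ is hyperfinite. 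Finally, nontriviality forces $\lambda$ non-atomic and $S$ aperiodic: if $\lambda$ sat on finitely many atoms, each $A_n$ would be (mod null) a union of the finitely many $\Gam$-permuted pieces $\Phi\inv(\text{atom})$, and almost invariance together with ergodicity would make $A_n$ null or conull, contradicting nontriviality. By Dye's theorem \cite{Dye59}, the ergodic aperiodic hyperfinite pmp relation $(Y,\lambda,S)$ is isomorphic to the orbit relation of an ergodic pmp $\Z$-action, and composing $\Phi$ with this isomorphism yields a measure preserving $\theta$ with $\theta(\Gam x)=\Z\theta(x)$.

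The main obstacle is the realization step in $(1)\Rightarrow(2)$: one must verify that the push-forward $S$ is a genuine \emph{ergodic, aperiodic, pmp} hyperfinite relation, so that Dye's theorem applies and yields an honest $\Z$-action whose orbits are exactly the $S$-classes, ensuring $\theta$ is \emph{onto} orbits rather than merely mapping into $E_0$. The delicate points are the invariance of $\lambda$ under $S$ and the non-atomicity extracted from nontriviality; the diagonal plus Borel--Cantelli upgrade of $(A_n)$ to the orbit map $\Phi$ is precisely what makes these available. The two directions together also explain the dichotomy flavour: the $\Z$-action one produces is aperiodic exactly when the original action fails to be strongly ergodic.
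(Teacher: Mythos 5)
You cannot really be compared against the paper on this one: the paper does not prove Theorem~\ref{thm:JS} at all, it quotes it from \cite[Theorem~2.1]{JS87}, and the only related argument it contains is Lemma~\ref{lem:aiseq}. So your proposal has to be judged on its own merits. Your direction $(2)\Rightarrow(1)$ is correct, and your decision to read $(2)$ with an \emph{aperiodic} $\Z$-action is not just reasonable but necessary for the theorem to have content: for any ergodic action on a non-atomic $(X,\mu)$, almost every orbit meets every positive-measure set, so any partition of $X$ into $n$ pieces of measure $1/n$ already defines a factor map onto the cyclic $\Z$-action on $n$ points. Your Rokhlin tower, the integer cocycle $c_g$, and the telescoping/cut-off estimate are all sound, and they closely parallel the proof of Lemma~\ref{lem:aiseq} in the paper (which uses a finite set $S\sub\Lam$ carrying most of the mass of the ``jump'' exactly where you cut off $c_g$).

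The genuine gap is in $(1)\Rightarrow(2)$, at precisely the spot you label ``the main obstacle'' and then never close. Defining $S$ as the equivalence relation generated by the pairs $(\Phi(x),\Phi(gx))$ only yields the inclusion $\Phi(\Gam x)\sub[\Phi(x)]_S$. If two distinct $\Gam$-orbits have overlapping but non-nested images inside a single $E_0$-class --- and nothing in the construction of $\Phi$ from indicator functions rules this out --- then their images merge into one $S$-class that is strictly larger than either orbit image. After Dye's theorem the $\Z$-orbits are the $S$-classes, so the map you produce is only guaranteed to satisfy $\theta(\Gam x)\sub\Z\theta(x)$, not the equality $\theta(\Gam x)=\Z\theta(x)$ that the paper's definition of factor map demands. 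Note also that the set of $x$ for which equality fails is $\Gam$-invariant (generating pairs put $\Phi(x)$ and $\Phi(hx)$ in the same $S$-class), so by ergodicity the failure, if it occurs at all, occurs almost everywhere; nothing you wrote excludes this. This is the missing idea, not a routine verification: the proof in \cite{JS87} does not take the relation generated on the image, but instead works with the cocycle $\Gam\times X\rarrow \oplus_\N\Z/2\Z$ canonically attached to $\Phi$ (well defined since $\oplus_\N\Z/2\Z$ acts freely on $\{0,1\}^\N$ with orbit relation $E_0$) and constructs the quotient from the associated skew product, precisely so that orbit images come out as full classes.

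The subsidiary claims in that step also have problems. The assertion that $S$ is pmp ``because both marginals of every generating correspondence equal $\lambda$'' is not a valid inference: measure preservation of a countable relation is a statement about its partial isomorphisms, and a coupling of $\lambda$ with itself concentrated on $S$ with both marginals $\lambda$ can exist even when $S$ does not preserve $\lambda$. (Toy example: $Y=\{a,b\}$, $\lambda(a)=2/3$, $\lambda(b)=1/3$, $S$ the full relation; put mass $\eps$ on each of $(a,b)$ and $(b,a)$ and the remaining mass on the diagonal. Both marginals are $\lambda$, yet the flip does not preserve $\lambda$.) In quotient situations pmp-ness is normally \emph{deduced from} class-surjectivity --- again the missing point. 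Even the Borelness of $S$ needs care, since $(\Phi\times\Phi)(\cR(\Gam\act X))$ is in general only analytic. Finally, your non-atomicity argument is off: ``finitely many atoms'' is not the negation of non-atomic. The correct argument is that an atom $y_0$ of $\lambda$ makes $\Phi^{-1}([y_0]_{E_0})$ a positive-measure, mod-null $\Gam$-invariant set, hence conull by ergodicity, so a conull set maps into a single $E_0$-class; this contradicts $\mu(A_n)\rarrow c\in(0,1)$, because mapping a set of measure $1-\eps$ into the class of a fixed $y$ forces $\mu(A_n)\leq\eps$ or $\mu(A_n)\geq 1-\eps$ for all large $n$ (a short argument you also omit, since the nontriviality of $\Phi$ is exactly what must be checked). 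These last items are repairable; the class-surjectivity issue is not, within the approach as written.
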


\begin{lemma}\label{lem:aiseq}
	Let $\Gam\act (X,\mu)$ and $\Lam\act (Y,\nu)$ be ergodic pmp actions of countable groups on probability spaces. Suppose $\theta:X\rarrow Y$ is a factor map and $(B_n)_n$ is an almost invariant sequence for $\Lam\act (Y,\nu)$. Put $A_n=\theta\inv(B_n)$. Then $(A_n)_n$ is an almost invariant sequence for $\Gam\act (X,\mu)$. Moreover, if $\lim_n \nu(\cup_{k\geq n} sB_k\Del B_k)=0$ for every $s\in\Lam$, then also $\lim_n \mu(\cup_{k\geq n} gA_k\Del A_k)=0$ for every $g\in\Gam$.
\end{lemma}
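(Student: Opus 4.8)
The plan is to verify both claims directly from the definitions, using the factor map $\theta$ to pull back set-theoretic identities and the measure-preservation property to control measures. The key observation is that $\theta$ being a factor map means $\theta(\Gam x)=\Lam\theta(x)$ almost everywhere, so for every $g\in\Gam$ there is a measurable way to track how $g$ on $X$ relates to elements of $\Lam$ on $Y$; and since $\theta$ is measure preserving, $\mu(\theta\inv(C))=\nu(C)$ for every measurable $C\sub Y$.

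First I would prove the basic almost invariance. Fix $g\in\Gam$. The crux is the set-theoretic relation between $g\theta\inv(B_n)$ and $\theta\inv(B_n)$. For almost every $x$ we have $\theta(gx)\in\Lam\theta(x)$, so one should decompose $X$ into countably many measurable pieces $X_s=\{x\mid \theta(gx)=s\theta(x)\}$ for $s\in\Lam$. On each piece, membership of $gx$ in $\theta\inv(B_n)$ becomes membership of $x$ in $\theta\inv(s\inv B_n)$. Thus $g\theta\inv(B_n)\Del\theta\inv(B_n)$ is controlled, up to null sets, by the pullbacks $\theta\inv(sB_n\Del B_n)$ over the relevant $s$. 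Applying $\mu\circ\theta\inv=\nu$ and the almost invariance $\lim_n\nu(sB_n\Del B_n)=0$, I would conclude $\lim_n\mu(gA_n\Del A_n)=0$. Here one must handle the fact that $g$ may correspond to different elements $s\in\Lam$ on different pieces of $X$; the clean way is to observe that since the action is pmp and $\theta$ measure preserving, the relevant $s$'s and their contributions can be summed, and in fact only finitely many pieces carry positive measure contributions that matter in the limit.

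For the moreover part I would run the same argument but with $\cup_{k\geq n} gA_k\Del A_k$ in place of a single symmetric difference. The point is that $\cup_{k\geq n}g\theta\inv(B_k)\Del\theta\inv(B_k)$, when restricted to the piece $X_s$, equals (up to null sets) $\theta\inv\bigl(\cup_{k\geq n}sB_k\Del B_k\bigr)$, because pullback by $\theta\inv$ commutes with unions and symmetric differences. Then measure-preservation gives $\mu\bigl(\cup_{k\geq n}gA_k\Del A_k\bigr)\leq\sum_s\nu\bigl(\cup_{k\geq n}sB_k\Del B_k\bigr)$, and the hypothesis that each of these tends to $0$ as $n\to\infty$ lets me conclude. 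The one subtlety to address is the interchange of the sum over $s\in\Lam$ with the limit in $n$: I expect this to be the main obstacle, and I would resolve it by noting that $g$ is a single measure-preserving transformation, so the decomposition $X=\sqcup_s X_s$ is a genuine partition and only countably many $s$ occur; uniform integrability or simply a dominated-convergence argument on the counting measure over $\Lam$ (using that the total is bounded by the measures of the $X_s$, which sum to $1$) handles the interchange. Once that bookkeeping is in place, both statements follow.
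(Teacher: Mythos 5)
Your proposal is correct and takes essentially the same approach as the paper: both arguments decompose $X$ (up to a null set) into pieces $X_s$ according to which $s\in\Lam$ implements $g$ through $\theta$, identify $(gA_k\Del A_k)\cap X_s$ with the pullback of $s^{\pm 1}B_k\Del B_k$, and transfer measure via $\mu\circ\theta\inv=\nu$. The only difference is bookkeeping for the countable sum against the limit in $n$: the paper truncates to a finite $S\sub\Lam$ whose pieces carry measure at least $1-\eps$ and then lets $\eps\to 0$, whereas you keep the full partition and apply dominated convergence with dominating summable function $s\mapsto\mu(X_s)$ --- two phrasings of the same limiting argument, and your dominated-convergence step is indeed needed, since your displayed bound $\sum_{s\in\Lam}\nu\bigl(\cup_{k\geq n}sB_k\Del B_k\bigr)$ can be infinite as literally written and must be replaced by $\sum_{s}\mu\bigl(X_s\cap\theta\inv(\cup_{k\geq n}sB_k\Del B_k)\bigr)$.
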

\begin{proof}
	We will prove the moreover part, the fact that $(A_n)_n$ is an almost invariant sequence can be done in exactly the same way by dropping \textquotedblleft $\cup_{k\geq n}$" everywhere. Fix $g\in\Gam$ and let $\eps>0$. By the assumptions we can take $S\sub \Lam$ finite such that
	\[
	\mu(\{x\in X\mid \theta(g\inv x)\in S\theta(x) \})\geq 1-\eps.
	\]
	Writing $X_0:=\{x\in X\mid \theta(g\inv x)\in S\theta(x) \}$ we then get
	\begin{align*}
	\mu(\cup_{k\geq n} gA_k\setminus A_k) &= \mu(\cup_{k\geq n} g\theta\inv(B_k)\setminus \theta\inv(B_k))\\
	&= \mu(\{x\in X\mid \exists k\geq n:\theta(g\inv x)\in B_k, \theta(x)\notin B_k\})\\
	&\leq \eps + \mu(X_0\cap \{x\in X\mid \exists k\geq n:\theta(g\inv x)\in B_k, \theta(x)\notin B_k\})\\
	&= \eps + \mu(\cup_{s\in S}\{x\in X_0\mid \theta(g\inv x)=s\theta(x) \text{ and } \exists k\geq n:s\theta(x)\in B_k, \theta(x)\notin B_k\})\\
	&\leq \eps + \sum_{s\in S}\mu(\{x\in X_0\mid \exists k\geq n:s\theta(x)\in B_k, \theta(x)\notin B_k\})\\
	&\leq \eps + \sum_{s\in S}\mu(\cup_{k\geq n}\theta\inv(s\inv B_k)\setminus \theta\inv(B_k))\\
	&= \eps + \sum_{s\in S}\nu(\cup_{k\geq n} s\inv B_k\setminus B_k),
	\end{align*}
	which by assumption converges to $\eps$ as $n\rarrow\infty$. By symmetry the same will hold for $\mu(\cup_{k\geq n} A_k\setminus gA_k)$. Since $\eps$ was arbitrary, this finishes the proof.
\end{proof}

\subsection{Cocycles}

We briefly recall the notion of (1-)cocycles, merely to fix notation, as they will only be used for the proof of Theorem~\ref{thm:R} in section~\ref{sec:R}. We refer to \cite[Chapter~20]{Kec10} for a more detailed exposition. Let $\Gam$ be a countable group with a (Borel) measure preserving action on a standard measure space $(X,\mu)$ and let $G$ be a Polish group. A (Borel) \emph{1-cocycle} for the action $\Gam\act X$ with values in $G$ is a Borel map $c:\Gam\times X\rarrow G$ satisfying the \emph{cocycle identity}
\[
c(gh,x) = c(g,h\cdot x) c(h,x),
\]
for all $g,h\in\Gam$ and $\mu$-almost every $x\in X$. Note that this in particular implies that $c(1,x)=1$ and $c(g,x)^{-1}=c(g^{-1},g\cdot x)$. We will identify cocycles that are equal almost everywhere and denote by $Z^1(\Gam\act X,G)$ the set of cocycles for the action $\Gam\act X$ with values in $G$. Observe that if $c\in Z^1(\Gam\act X,G)$ is independent of $x$, then it is given by a homomorphism $\Gam\rarrow G$. In particular when $G=S^1$ cocycles independent of $x$ are given by characters $\gam\in\hat{\Gam}$. Denote by $L(X,\mu,G)$ the space of all Borel maps $f:X\rarrow G$ (up to agreeing $\mu$-almost everywhere). Then we have an action $L(X,\mu,G)\act Z^1(\Gam\act X,G)$ given by
\[
(f\cdot c)(g,x) = f(g\cdot x)c(g,x) f(x)\inv.
\]
Two cocycles $c_1,c_2\in Z^1(\Gam\act X,G)$ are called \emph{cohomologous} if there exists $f\in L(X,\mu,G)$ such that $f\cdot c_1 = c_2$. If a cocycle $c$ is cohomologous to the trivial cocycle we call $c$ a \emph{coboundary}. We denote the set of coboundaries by $B^1(\Gam\act X,G)$.

\vspace{5pt}

Analogously, we can define cocycles for a countable Borel equivalence relation $E$, where countable means that the equivalence classes of $E$ are countable. For $C\sub E$ Borel we will write $C_x:=\{y\in X\mid (x,y)\in C\}$ and $C^y:=\{x\in X\mid (x,y)\in C\}$. Following \cite{FM77} we can define two $\sigma$-finite measures on $E\sub X^2$ by
\[
\nu_l(C) := \int_X \abs{C_x}\,d\mu(x) \quad\text{and}\quad \nu_r(C) := \int_X \abs{C^y}\,d\mu(y),
\]
where $\abs{S}$ denotes the cardinality of a set $S$. We say that $E$ is measure preserving if $\nu_l=\nu_r$ and in that case we denote this uniquely defined measure by $\nu$. A \emph{cocycle} of $E$ with values in $G$ is then defined to be a Borel map $c:E\rarrow G$ satisfying the \emph{cocycle identity}
\[
c(x,z)=c(y,z)c(x,y)
\]
for all $x,y,z\in Y$ belonging to a single $E$-class, and where $Y\sub X$ is an $E$-invariant Borel subset of $X$ of measure 1. Like before we identify two cocycles that are equal $\nu$-almost everywhere and we denote by $Z^1(E,G)$ the set of cocycles of $E$ with values in $G$. Here we have an action $L(X,\mu,G)\act Z^1(E,G)$ given by
\[
(f\cdot c)(x,y) = f(y)c(x,y)f(x)\inv
\]
and we call two cocycles $c_1,c_2\in Z^1(E,G)$ cohomologous if there exists $f\in L(X,\mu,G)$ such that $f\cdot c_1 = c_2$. The cocycles cohomologous to the trivial cocycle are again called coboundaries, and the set of coboundaries is denoted by $B^1(E,G)$.

\section{A structural result for Cartan subalgebras}\label{sec:structure}

For this section we fix the following.
\begin{itemize}
	\item $\Gam\act X$ a free ergodic pmp action of a countable group $\Gam$ on a standard probability space $(X,\mu)$,
	\item an arbitrary \II factor $N$,
	\item $\cM:=(\Linf\cross\Gam)\otb N$.
\end{itemize}
Note that
\[
\cM\cong (L^\infty(X)\otb N)\cross\Gamma
\]
where $\Gam$ acts trivially on $N$. Also, we can write $L^\infty(X)\otb N$ as the (constant) direct integral $L^\infty(X)\otb N=\dint_X N\,d\mu(x)$. For notational convenience we will drop the measure $\mu$ from the integrals from now on. In this section we will prove the main structural result, namely Theorem~\ref{thm:structure}. For this we will need a few lemmas. Note that only Lemma~\ref{lem:AuB} will assume that $\Gam\in\cC_{rss}$. The other results, in particular Lemma~\ref{lem:Cartaniff}, hold for any countable group $\Gam$.

\begin{lemma}\label{lem:meas}
	Suppose $f_1, f_2:X\rarrow \Cartan(N)$ are measurable functions such that $f_1(x)\sim_u f_2(x)$ in $N$ for almost every $x\in X$. Write $A_i=\dint_X f_i(x)\,dx$, $i=1,2$. Then there exists $u\in\cU(L^\infty(X)\otb N)$ such that $uA_1u^*=A_2$.
\end{lemma}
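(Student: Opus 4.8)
The plan is to reduce the statement to a measurable selection problem and then assemble the selected unitaries into a single decomposable unitary. Since $L^\infty(X)\otb N=\dint_X N\,dx$ is a constant direct integral, a measurable essentially bounded field $x\mapsto u_x\in\cU(N)$ determines a decomposable operator $u=\dint_X u_x\,dx\in L^\infty(X)\otb N$; the whole point is to choose the pointwise conjugating unitaries in a measurable fashion.

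First I would consider the set
\[
W:=\{(x,u)\in X\times\cU(N)\mid u f_1(x)u^*=f_2(x)\}.
\]
The hypothesis says precisely that the section $W_x=\{u\in\cU(N)\mid uf_1(x)u^*=f_2(x)\}$ is nonempty for almost every $x\in X$. To obtain a measurable selection I need $W$ to be Borel. This follows from the fact that the conjugation action $\cU(N)\times\vNa(N)\rarrow\vNa(N)$, $(u,A)\mapsto uAu^*$, is Borel for the Effros Borel structure --- the same fact underlying the Borelness of unitary conjugacy in Proposition~\ref{prop:SV-Cartan}. Indeed, using $E_{uAu^*}(z)=uE_A(u^*zu)u^*$ and the trace property one has $\tau(E_{uAu^*}(x)y)=\tau(E_A(u^*xu)\,u^*yu)$, which is jointly Borel in $(u,A)$, so the map $(x,u)\mapsto(uf_1(x)u^*,f_2(x))$ from $X\times\cU(N)$ into $\vNa(N)\times\vNa(N)$ is Borel; since $f_1,f_2$ are measurable and the diagonal of $\vNa(N)\times\vNa(N)$ is Borel, $W$ is the Borel preimage of the diagonal.

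Having $W$ Borel with full-measure projection onto $X$, I would apply the Jankov--von Neumann uniformization theorem to produce a $\mu$-measurable map $x\mapsto u_x$ with $(x,u_x)\in W$ for almost every $x$. This measurable field of unitaries defines a decomposable operator $u=\dint_X u_x\,dx\in L^\infty(X)\otb N$, which is unitary: by Proposition~\ref{prop:basicprop} the fields $x\mapsto u_x^*u_x$ and $x\mapsto u_xu_x^*$ decompose $u^*u$ and $uu^*$, and both agree almost everywhere with the constant field $x\mapsto 1$, so $u^*u=uu^*=1$. Finally, again by Proposition~\ref{prop:basicprop} together with the uniqueness of the direct integral decomposition (Proposition~\ref{prop:decvNa}),
\[
uA_1u^*=\dint_X u_xf_1(x)u_x^*\,dx=\dint_X f_2(x)\,dx=A_2,
\]
as desired.

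The main obstacle is the Borelness of $W$, i.e. establishing that pointwise unitary conjugacy can be witnessed measurably; once this is in place, the selection theorem and the direct-integral bookkeeping are routine. A minor technical point to verify is that a field of operators that is measurable for the standard Borel structure on $\cU(N)$ is genuinely a decomposable operator for the constant direct integral $\dint_X N\,dx$, which holds because measurability in the Effros structure agrees, on norm-bounded sets, with the field measurability built into Definition~\ref{def:dint}.
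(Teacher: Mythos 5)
Your proposal is correct, but it takes a genuinely different route from the paper's proof. The paper never selects pointwise conjugating unitaries: instead it shows, via Popa's intertwining theorem (Theorem~\ref{thm:Popa}), that $A_1p_1\emb_{\LinfN}A_2p_2$ for all nonzero projections $p_1\in A_1$, $p_2\in A_2$ with $z(p_1)z(p_2)\neq 0$ (if not, a sequence of unitaries witnessing non-intertwining would, after decomposing over the direct integral and passing to a subsequence, witness $(A_1p_1)_x\not\emb_N(A_2p_2)_x$ almost everywhere, contradicting the pointwise conjugacy), and then upgrades intertwining to unitary conjugacy of corners using maximal abelian-ness (Lemma~\ref{lem:maxab}) and \cite[Theorem~C.3]{Va06}, finally patching corners together by a maximality argument in the finite algebra $\LinfN$. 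Your route instead attacks the measurable selection problem head on: the Borelness of $(u,A)\mapsto uAu^*$ via $E_{uAu^*}(z)=uE_A(u^*zu)u^*$ is correct (Borel in $A$ for fixed $u$, $\norm{\cdot}_2$-continuous in $u$ for fixed $A$, hence jointly Borel by the standard Carath\'eodory-type argument), and Jankov--von Neumann then hands you the conjugating unitary as an explicit direct integral. What your approach buys: it bypasses the intertwining machinery and \cite[Theorem~C.3]{Va06} entirely, and it actually proves a strictly more general statement, since nothing in your argument uses that the $f_i(x)$ are Cartan or even abelian, whereas the paper's upgrade step requires maximal abelian-ness. What the paper's approach buys: it avoids the descriptive-set-theoretic overhead (joint Borelness of the conjugation action plus a uniformization theorem), staying inside the operator-algebraic toolkit it has already set up; note, though, that the paper itself invokes measurable selection via \cite[Theorem~A.16]{Tak01} in the proofs of Lemmas~\ref{lem:Cartaniff} and \ref{lem:eqrel}, so your tools are entirely consistent with its methods. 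One terminological nit: the standard Borel structure on $\cU(N)$ comes from the Polish $\norm{\cdot}_2$-topology, not from the Effros structure (which lives on $\vNa(N)$ and on closed subsets of Polish spaces); this does not affect your argument, since $\norm{\cdot}_2$-Borel measurability of $x\mapsto u_x$ gives weak measurability of the field and hence a decomposable operator in $\LinfN$, exactly as you claim.
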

\begin{proof}
	We will prove that we can intertwine arbitrarily small corners of $A_1$ into arbitrarily small corners of $A_2$ inside $\LinfN$, after which we can conclude the proof with a maximality argument.
	\begin{claim}
		$A_1p_1\emb_{L^\infty(X)\otb N} A_2p_2$ for all nonzero projections $p_1\in A_1, p_2\in A_2$ such that $z(p_1)z(p_2)\neq 0$.
	\end{claim}
	\begin{proofofclaim}
		For $i=1,2$, we can write $p_i=\dint_X p_{i,x}\,dx$, with $p_{i,x}$ a projection in $A_{i,x}$. We then have the decompositions $A_ip_i=\dint_X (A_ip_i)_x\,dx = \dint_X A_{i,x}p_{i,x}\,dx$. Assume the claim doesn't hold. Then by Theorem~\ref{thm:Popa} we can find a sequence of unitaries $(u_n)_n\in\cU(A_1p_1)$ such that $\norm{E_{A_2p_2}(vu_nw)}_2\rarrow 0$ for all $v,w\in L^\infty(X)\otb N$. Writing $v=\dint_X v_x\,dx$, $w=\dint_X w_x\,dx$ and $u_n=\dint_X u_{n,x}\,dx$ we get
		\begin{align*}
		\norm{E_{A_2p_2}(vu_nw)}_2^2 &= \norm{E_{A_2p_2}\left(\dint_X v_xu_{n,x}w_x\,dx\right)}_2^2\\
		&= \norm{\dint_X E_{(A_2p_2)_x}(v_xu_{n,x}w_x)\,dx}_2^2\\
		&= \int_X \norm{E_{(A_2p_2)_x}(v_xu_{n,x}w_x)}_2^2\,dx,
		\end{align*}
		which by assumption converges to zero. Now let $\{t_i\}_{i\in\N}$ be a countable $\norm{.}_2$-dense subset of $N$. Letting $v$ and $w$ range over $\{1\ot t_i\mid i\in\N\}$ we get a subsequence of $(u_n)_n$ such that for almost every $x\in X$ we have $\norm{E_{(A_2p_2)_x}(t_iu_{n,x}t_j)}_2\rarrow 0$ for all $i,j\in\N$. Since $\{t_i\}_{i\in\N}$ is $\norm{.}_2$-dense in $N$, this means that $(A_1p_1)_x\not\emb_N (A_2p_2)_x$ for almost every $x$, which is absurd since $(A_1)_x=f_1(x)\sim_u f_2(x)=(A_2)_x$ in $N$ for almost every $x$, and $p_1$ and $p_2$ don't have disjoint central supports by assumption.
	\end{proofofclaim}

	\vspace{3pt}

	Now let $(p_i)_i$ and $(q_i)_i$ be maximal families of orthogonal projections such that $A_1p_i\sim_u A_2q_i$ inside $L^\infty(X)\otb N$ and put $p=\sum_i p_i$, $q=\sum_i q_i$. Then $A_1p\sim_u A_2q$ inside $L^\infty(X)\otb N$. In particular $p$ and $q$ are equivalent projections. Since $\LinfN$ is a finite von Neumann algebra, also $1-p$ and $1-q$ are equivalent, and so in particular have equal central support. Assuming $p\neq 1$ (and hence also $q\neq 1$), it then follows from the claim that $A_1(1-p)\emb_{L^\infty(X)\otb N} A_2(1-q)$. Moreover, by Lemma~\ref{lem:maxab}, $A_1$ and $A_2$ are maximal abelian inside $\LinfN$. Hence applying \cite[Theorem~C.3]{Va06} (see also \cite[Theorem~A.1]{Po01}) yields the existence of a partial isometry $v\in L^\infty(X)\otb N$ such that $v^*v\in A_1(1-p)$, $vv^*\in A_2(1-q)$ and $vA_1(1-p)v^*=A_2(1-q)vv^*$, contradicting the maximality above. We conclude that $A_1\sim_u A_2$ inside $L^\infty(X)\otb N$.
\end{proof}

\begin{lemma}\label{lem:Ax}
	Suppose $A$ is a Cartan subalgebra of $q(L^\infty(X)\otb N)q$ for some projection $q=\dint_X q_x\,dx \in L^\infty(X)\otb N$. Write $A=\dint_X A_x\,dx\sub \dint_X q_xNq_x\,dx$ and let $Y:=\{x\in X\mid q_x\neq 0\}$. Then $A_x$ is a Cartan subalgebra of $q_xNq_x$ for almost every $x\in Y$.
\end{lemma}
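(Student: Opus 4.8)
The plan is to verify the two defining properties of a Cartan subalgebra for the slices $A_x$ separately: that $A_x$ is maximal abelian in $q_xNq_x$, and that it is regular. Throughout I work over $Y$ and decompose $q(\Linf\otb N)q=\dint_Y q_xNq_x\,dx$, noting that this algebra contains the diagonalizable operators $L^\infty(Y)q$, which coincide with its center $\cZ(q(\Linf\otb N)q)$ (since $N$ is a factor). As $A$ is maximal abelian it contains this center, so $A$ is decomposable and the decomposition $A=\dint_Y A_x\,dx$ is legitimate by Proposition~\ref{prop:decvNa}.

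Maximal abelianness is the easy half: since $A$ is a Cartan subalgebra of $q(\Linf\otb N)q$ it is in particular maximal abelian, so Lemma~\ref{lem:maxab} applied to $M=q(\Linf\otb N)q$ gives immediately that $A_x$ is maximal abelian in $q_xNq_x$ for almost every $x\in Y$.

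The substantial part is regularity, i.e.\ $\cN_{q_xNq_x}(A_x)''=q_xNq_x$ almost everywhere. First I would extract a countable family of normalizers: as $q(\Linf\otb N)q$ is separable, its unit ball is SOT-separable, so one can choose a sequence $(u^{(k)})_k$ in $\cN_{q(\Linf\otb N)q}(A)$ that is SOT-dense in it, and then $\{u^{(k)}\}_k''=\cN_{q(\Linf\otb N)q}(A)''=q(\Linf\otb N)q$ by regularity of $A$. Decomposing $u^{(k)}=\dint_Y u^{(k)}_x\,dx$ and using Proposition~\ref{prop:basicprop} together with the uniqueness in Proposition~\ref{prop:decvNa}, the relation $u^{(k)}A(u^{(k)})^*=A$ forces $u^{(k)}_x A_x (u^{(k)}_x)^*=A_x$ almost everywhere; discarding a countable union of null sets, there is a single conull $Y_0\sub Y$ with $u^{(k)}_x\in\cN_{q_xNq_x}(A_x)$ for all $k$ and all $x\in Y_0$.

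It then remains to establish the ``generation commutes with direct integrals'' statement $\{u^{(k)}_x\}_k''=q_xNq_x$ for almost every $x$, which I would prove via commutants. The field $x\mapsto S_x:=\{u^{(k)}_x\}_k'$ is a measurable field of von Neumann algebras, and for a decomposable operator $T=\dint_Y T_x\,dx$ one has, by Proposition~\ref{prop:basicprop}, that $T$ commutes with every $u^{(k)}$ if and only if $T_x\in S_x$ almost everywhere. Writing $R:=\{u^{(k)}\}_k''=q(\Linf\otb N)q$, Proposition~\ref{prop:Comm} shows that $R'$ is decomposable; since $R'=\{u^{(k)}\}_k'$, the previous sentence identifies $R'=\dint_Y S_x\,dx$, whence $(R')_x=S_x$ almost everywhere by uniqueness of the decomposition. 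On the other hand Proposition~\ref{prop:Comm} also gives $(R')_x=(R_x)'$, so taking commutants fiberwise yields $R_x=\{u^{(k)}_x\}_k''$ almost everywhere; and since $R=q(\Linf\otb N)q=\dint_Y q_xNq_x\,dx$, uniqueness gives $R_x=q_xNq_x$. Therefore $q_xNq_x=\{u^{(k)}_x\}_k''\sub\cN_{q_xNq_x}(A_x)''\sub q_xNq_x$ almost everywhere, which is the desired regularity; combined with maximal abelianness this shows $A_x$ is a Cartan subalgebra of $q_xNq_x$ for almost every $x\in Y$. The main obstacle is precisely this commutant computation — above all justifying that $x\mapsto S_x$ is a genuine measurable field of von Neumann algebras and that taking commutants interchanges with the direct integral; everything else reduces to the uniqueness of direct integral decompositions.
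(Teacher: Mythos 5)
Your proposal follows the paper's proof almost verbatim for most of its length: maximal abelianness of the slices via Lemma~\ref{lem:maxab}, the choice of a countable dense family $\{u^{(k)}\}$ of normalizers, and the fiberwise normalization $u^{(k)}_x A_x (u^{(k)}_x)^*=A_x$ a.e.\ via Proposition~\ref{prop:basicprop} and uniqueness of decompositions are all exactly the paper's steps. The two arguments diverge only at the last step, the fiberwise generation $\{u^{(k)}_x\}''=q_xNq_x$ a.e. You prove it by a commutant computation: writing $R=\{u^{(k)}\}''=q(\LinfN)q$, you identify $(R')_x$ both with $(R_x)'$ (Proposition~\ref{prop:Comm}) and with $S_x=\{u^{(k)}_x\}'$ (membership criterion plus uniqueness), then take commutants fiberwise. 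The paper instead proves the per-element claim that $T\in\{u^{(i)}\}''$ forces $T_x\in\{u^{(i)}_x\}''$ a.e., and applies it to the countable family $q(1\ot t_n)q$, where $\{t_n\}$ is $\norm{\cdot}_2$-dense in $N$; almost every slice algebra $\{u^{(i)}_x\}''$ then contains a $\norm{\cdot}_2$-dense bounded subset of $q_xNq_x$ and hence equals it. The distinction matters precisely at the point you flag yourself: the uniqueness clause of Proposition~\ref{prop:decvNa} applies only to genuine (measurable) decompositions, so your identification $(R')_x=S_x$ requires knowing that $x\mapsto\{u^{(k)}_x\}'$ is a measurable field of von Neumann algebras — equivalently, that generation and commutants commute with direct integrals at the level of fields. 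This is a standard fact (it is in \cite{Tak01} and in Dixmier's book, though not among the preliminaries quoted in \cite{KRII} by the paper), but within the paper's stated toolkit it is essentially equivalent to the statement you are trying to prove, so your argument is complete only modulo that external citation; without it the appeal to uniqueness is circular. The paper's dense-family trick buys exactly this: it never needs a measurable field of commutants, only membership of countably many concrete decomposable operators, at the cost of the per-element claim (which itself needs a short Kaplansky-density/subsequence argument: bounded SOT-limits pass to a.e.\ slices in $\norm{\cdot}_2$ along a subsequence). So: same skeleton, a genuinely different and slightly heavier closing step on your side, correct provided you import the standard direct-integral fact you identified as the obstacle.
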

\begin{proof}
	First note that $A_x\sub q_xNq_x$ is maximal abelian for almost every $x\in Y$ by Lemma~\ref{lem:maxab}. To complete the proof we need to show that $A_x$ is regular in $q_xNq_x$ for almost every $x\in Y$. Therefore take $u\in\cN_{q(L^\infty(X)\otb N)q}(A)$ and write $u=\dint_X u_x\,dx$. Then
	\[
	\dint_X A_x\,dx = A = uAu^* = u\left(\dint_X A_x\,dx\right)u^* = \dint_X u_xA_xu_x^*\,dx
	\]
	and so $A_x=u_xA_xu_x^*$ almost everywhere. Let now $\{u^{(i)}\}$ be a countable $\norm{.}_2$-dense subset of $\cN_{q(L^\infty(X)\otb N)q}(A)$ and take null sets $E_i\sub X$ such that $A_x=u_x^{(i)}A_xu_x^{(i)*}$ for $x\notin E_i$. Put $E=\bigcup E_i$ so that $A_x=u_x^{(i)}A_xu_x^{(i)*}$ for all $i$ and all $x\notin E$. Thus for $x\notin E$ we have that $\{u_x^{(i)}\mid i\in\N\}\sub\cN_{q_xNq_x}(A_x)$ and so in order to prove that almost every $A_x$ is regular it suffices to show that $\{u_x^{(i)}\mid i\in\N\}\sub\cN_{q_xNq_x}(N_x)$ generates $q_xNq_x$ almost everywhere. For this note that if $T=\dint_X T_x\,dx\in q(L^\infty(X)\otb N)q = \{u^{(i)}\mid i\in\N\}''$ then $T_x\in\{u_x^{(i)}\mid i\in\N\}''$ almost everywhere by Propositions~\ref{prop:basicprop} and \ref{prop:decvNa}. Now take a countable $\norm{.}_2$-dense subset $\{t_n\}_n\in N$ and consider $q(1\ot t_n)q=\dint_X q_xt_nq_x\,dx$. The above claim implies that $q_xt_nq_x\in \{u_x^{(i)}\mid i\in\N\}''$ almost everywhere. Let $F_n:=\{x\in X\mid q_xt_nq_x\notin \{u_x^{(i)}\mid i\in\N\}''\}$ and $F:=\bigcup_n F_n$. Then for all $x$ not in the null set $F$ the $u_x^{(i)}$ generate $q_xNq_x$. We conclude that $A_x$ is regular in $q_xNq_x$ for almost every $x$, finishing the proof.
\end{proof}

\begin{lemma}\label{lem:Cartaniff}
	Suppose $A$ is a von Neumann subalgebra of $L^\infty(X)\otb N$ and write $A=\dint_X A_x\,dx$. Then the following are equivalent:
	\begin{enumerate}
		\item $A$ is a Cartan subalgebra of $\cM$,
		\item $A_x$ is a Cartan subalgebra of $N$ for almost every $x\in X$ and for every $g\in\Gam$, $A_{g^{-1}x}\sim_u A_x$ inside $N$ for almost every $x\in X$.
	\end{enumerate}
\end{lemma}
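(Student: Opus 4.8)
The plan is to prove the two implications separately. Throughout I write $\sigma_g=\Ad u_g$ for the action of $\Gam$ on $\LinfN=\dint_X N\,dx$, so that $\sigma_g(\dint_X a_x\,dx)=\dint_X a_{g^{-1}x}\,dx$, and I set $A^{(g)}:=u_gAu_g^*=\dint_X A_{g^{-1}x}\,dx$, which is again a subalgebra of $\LinfN$ since $u_g$ normalizes $\LinfN$. A preliminary observation used in both directions is that $A$ is maximal abelian in $\cM$ if and only if it is maximal abelian in $\LinfN$: the forward direction is automatic, and for the converse note that maximal abelianness in $\LinfN$ forces $\Linf=\cZ(\LinfN)\sub A$, after which a Fourier expansion $b=\sum_g b_gu_g\in A'\cap\cM$ together with freeness of $\Gam\act X$ kills all $b_g$ with $g\neq e$, giving $b\in A'\cap\LinfN=A$. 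Combined with Lemma~\ref{lem:maxab}, maximal abelianness of $A$ in $\cM$ is thus equivalent to $A_x$ being maximal abelian in $N$ almost everywhere.

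For $(2)\Rightarrow(1)$, maximal abelianness follows from the observation above. For regularity I construct two families of normalizers. First, since $A_x$ is regular in $N$ a.e., a measurable selection argument applied to the Borel set $\{(x,u)\in X\times\cU(N)\mid uA_xu^*=A_x\}$ (whose sections are the closed groups $\cN_N(A_x)$) yields countably many measurable fields $x\mapsto u_x^{(i)}$ fiberwise dense in $\cN_N(A_x)$; the operators $\dint_X u_x^{(i)}\,dx$ then lie in $\cN_{\LinfN}(A)$ and generate $\LinfN$ by Proposition~\ref{prop:decvNa}, so $A$ is regular in $\LinfN$. Second, the hypothesis $A_{g^{-1}x}\sim_u A_x$ a.e.\ says exactly that $A^{(g)}\sim_u A$ in $\LinfN$, so Lemma~\ref{lem:meas} gives $v_g\in\cU(\LinfN)$ with $v_gA^{(g)}v_g^*=A$, whence $w_g:=v_gu_g$ normalizes $A$. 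As $u_g=v_g^*w_g$, the algebra generated by $\cN_{\LinfN}(A)$ and $\{w_g\}_g$ contains $\LinfN$ and all $u_g$, hence equals $\cM$, proving $A$ is regular in $\cM$.

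For $(1)\Rightarrow(2)$ I first upgrade $A$ to a Cartan subalgebra of the intermediate algebra $\LinfN$, and then extract the two bullet points. Maximal abelianness in $\LinfN$ holds by the preliminary observation. For regularity in $\LinfN$, I use that $E_{\LinfN}$ kills the off-diagonal and that $\cN_\cM(A)$ is a group with weakly --- hence $\|\cdot\|_2$- --- dense linear span in $\cM$. For $w\in\cN_\cM(A)$ with $\theta=\Ad w|_A$, matching $u_e$-coefficients in $wa=\theta(a)w$ gives $E_{\LinfN}(w)=w_e$ with $w_ea=\theta(a)w_e$; thus $w_e^*w_e,w_ew_e^*\in A'\cap\LinfN=A$ and the polar part of $w_e$ is a groupoid normalizer of $A$ in $\LinfN$, so $w_e\in\cN_{\LinfN}(A)''$. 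Applying the $\|\cdot\|_2$-continuous map $E_{\LinfN}$ to the dense span then yields $\LinfN\sub\cN_{\LinfN}(A)''$, i.e.\ $A$ is Cartan in $\LinfN$; Lemma~\ref{lem:Ax} (with $q=1$) now gives that $A_x$ is Cartan in $N$ a.e., the first bullet of $(2)$. For the second bullet, fix $g$; since $u_g\in\cN_\cM(A)''$ and the span of $\cN_\cM(A)$ is $\|\cdot\|_2$-dense, some normalizer $w$ has $w_g=E_{\LinfN}(wu_g^*)\neq0$, and the $u_g$-coefficient identity $w_g\sigma_g(a)=\theta(a)w_g$ exhibits an intertwiner showing $A^{(g)}\emb_{\LinfN}A$ via Theorem~\ref{thm:Popa}. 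Applying the same to $w^*$ gives $A\emb_{\LinfN}A^{(g)}$; as both are maximal abelian, the maximality argument of Lemma~\ref{lem:meas} together with \cite[Theorem~A.1]{Po01} upgrades this to $A^{(g)}\sim_u A$ in $\LinfN$, which decomposes into $A_{g^{-1}x}\sim_u A_x$ in $N$ a.e.

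I expect the main obstacle to be the regularity arguments, where the hypothesis on normalizers lives at the "wrong" level. The delicate point in $(1)\Rightarrow(2)$ is that regularity in the large algebra $\cM$ must be pushed down to the intermediate subalgebra $\LinfN$ and then to the fibers; the key device is that every element of $\cN_\cM(A)$ has a diagonal ($u_e$) Fourier coefficient whose polar part is a groupoid normalizer of $A$ inside $\LinfN$, so that the normalizing groupoid of $A$ in $\cM$ "factors through" that of $A$ in $\LinfN$ under $E_{\LinfN}$. In the reverse direction the corresponding subtlety is the measurable assembly of the fiberwise normalizers $\cN_N(A_x)$ into honest normalizers of $A$ in $\LinfN$, which I expect to be routine but is the only genuinely measure-theoretic input.
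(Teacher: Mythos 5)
Your direction $(2)\Rightarrow(1)$ is correct and follows essentially the paper's own route (measurable selection of fiberwise normalizers, plus the unitaries $v_gu_g$ coming from Lemma~\ref{lem:meas}), and your direct Fourier-coefficient proof that $A$ Cartan in $\cM$ forces $A$ Cartan in $\LinfN$ is a legitimate substitute for the paper's citation of \cite{Dye63} --- though note that the step ``the polar part of $w_e$ is a groupoid normalizer of $A$, hence $w_e\in\cN_{\LinfN}(A)''$'' is exactly Dye's lemma on groupoid normalizers of masas in finite von Neumann algebras, which you should cite or prove rather than assert. The genuine gap is in the last step of $(1)\Rightarrow(2)$. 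From a \emph{single} normalizer $w$ with $w_g=E_{\LinfN}(wu_g^*)\neq 0$ you get one nonzero intertwiner, i.e.\ one corner embedding $A^{(g)}\emb_{\LinfN}A$. But $w_g$ may be supported over a proper positive-measure subset $X_0\subsetneq X$ (nothing forces its central support to be $1$), so this carries fiberwise information only over $X_0$. The proposed upgrade --- ``the maximality argument of Lemma~\ref{lem:meas} together with \cite[Theorem~A.1]{Po01}'' --- cannot be run from this: the maximality argument in Lemma~\ref{lem:meas} needs corner embeddings $A^{(g)}p\emb Aq$ for \emph{all} pairs of nonzero projections with non-disjoint central supports (there this comes from the a.e.\ fiberwise hypothesis), whereas you have it for one pair; and Popa's theorem upgrades intertwining to unitary conjugacy only for Cartan subalgebras of a \emph{factor}, while $\LinfN$ has center $\Linf$. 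Indeed, in a non-factor even two-sided intertwining of Cartan subalgebras does not give unitary conjugacy: in $N\oplus N$ take $A_1\oplus A_2$ and $A_1\oplus B_2$ with $A_2\not\sim_u B_2$ in $N$; each embeds into the other through the first summand, yet they are not unitarily conjugate. So as written you prove the second bullet of $(2)$ only on a positive-measure subset of $X$, not almost everywhere.

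This is exactly why the paper localizes \emph{before} invoking Popa's theorem: it approximates $u_g$ in $\norm{\cdot}_2$ by linear combinations $\sum_i\lambda_iu_i$ of normalizers to within $1/n$, observes that the $g$-Fourier coefficient of such a combination is then $1/n$-close to $1$ in $L^2$, and concludes that the set of $x\in X$ at which \emph{some} normalizer has non-vanishing $g$-coefficient has measure at least $1-1/n$ for every $n$, hence full measure. Only then does it pass to fibers, obtaining $A_{g^{-1}x}\emb_N A_x$ for almost every $x$, and applies \cite[Theorem~A.1]{Po01} fiberwise in the factor $N$, where intertwining of Cartan subalgebras legitimately yields $A_{g^{-1}x}\sim_u A_x$; Lemma~\ref{lem:meas} then assembles these if a global statement is wanted. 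Your argument needs this quantitative full-measure step (equivalently, a strong intertwining $A^{(g)}\emb^s_{\LinfN}A$ rather than a single corner embedding) to be complete.
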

\begin{proof}
	$2\Rightarrow 1$. First note that $A:=\dint_X A_x\,dx$ is maximal abelian inside $\cM$. Indeed, by Lemma~\ref{lem:maxab} $A$ is maximal abelian inside $L^\infty(X)\otb N$. In particular $L^\infty(X)\sub A$. Since $L^\infty(X)$ is maximal abelian inside $L^\infty(X)\cross \Gam$ we then get
	\[
	A = A'\cap (\LinfN) \sub A'\cap \cM \sub A'\cap\Linf'\cap \cM = A'\cap (\LinfN) = A,
	\]	
	and so $A$ is maximal abelian inside $\cM$. To get regularity, we first observe that
	\[
	\cN_\cM(A)\super \cN_{L^\infty(X)\otb N}(A) = \dint_X \cN_N(A_x)\,dx
	\]
	where $\dint_X \cN_N(A_x)\,dx$ is the subset of $\LinfN$ consisting of all $u=\dint_X u_x\,dx$ such that $u_x\in\cN_N(A_x)$ for almost every $x\in X$. We note here that the set $\ClSub(\cU(N))$ of closed subsets of the Polish space $\cU(N)$ is a standard Borel space when given the Effros Borel structure. Moreover, in the proof of \cite[Proposition 12]{SV11} it is shown that the map $\Cartan(N)\rarrow \ClSub(\cU(N)):A\mapsto\cN_N(A)$ is Borel. Together with the measurability of the field $x\mapsto A_x$, this gives the measurability of the field $x\mapsto \cN_N(A_x)$.
	
	\begin{claim}
		$\cN_{L^\infty(X)\otb N}(A)''=\LinfN$.
	\end{claim}
	\begin{proofofclaim}
		Assume the claim doesn't hold. Then we can write $\cN_{L^\infty(X)\otb N}(A)''=\dint_X N_x\,dx$ where $N_x\subsetneq N$ for all $x$ in some subset of $X$ of positive measure. Let $\eps_x:=\sup\{\norm{u-E_{N_x}(u)}_2\mid u\in\cN_N(A_x)\}$ and define $Y:=\{x\in X\mid \eps_x>0\}$. By assumption $\mu(Y)>0$. Since $X\rarrow \ClSub(\cU(N)): x\mapsto \cN_N(A_x)$ and $X\rarrow \vNa(N):x\mapsto N_x$ are Borel, it follows easily that the set $\bB:= \{(x,u)\in Y\times \cU(N)\mid u\in\cN_N(A_x), \norm{u-E_{N_x}(u)}_2\geq \eps_x/2\}$ is a Borel subset of $Y\times \cU(N)$. By construction $\pi_1(\bB)=Y$, where $\pi_1$ is the projection onto the first coordinate. Hence by \cite[Theorem A.16]{Tak01} there exists a measurable function $\varphi:Y\rarrow \cU(N)$ such that $(x,\varphi(x))\in\bB$ for all $x\in Y$. Putting $\varphi(x)=1$ for $x\notin Y$, we then get $T:=\dint_X \varphi(x)\,dx\in \dint_X \cN_N(A_x)\,dx$, but $T\notin \dint_X N_x\,dx$, which is impossible.
	\end{proofofclaim}
	
	\vspace{3pt}
	
	It follows from the claim that $\LinfN\sub\cN_\cM(A)''$. Hence it suffices to argue that also $u_g\in \cN_\cM(A)''$ for every $g\in \Gam$. Fix $g\in \Gam$. For an element $a=\dint_X a_x\,dx\in L^\infty(X)\otb N$ we see that
	\[
	u_gau_g^* = (\sigma_g\ot\id)(a) = (\sigma_g\ot\id)\left(\dint_X a_x\,dx\right) = \dint_Xa_x\,d(gx) = \dint_X a_{g^{-1}x}\,dx,
	\]
	so $u_g$ acts by \textquotedblleft shifting" the components of the direct integral, yielding $u_gAu_g^*=A_{g^{-1}}$ where $A_{g^{-1}}:=\dint_X A_{g^{-1}x}\,dx$. On the other hand, from Lemma~\ref{lem:meas} and the given fact that $A_{g^{-1}x}\sim_u A_x$ for almost every $x\in X$, it follows that there exists $u(g)\in\cU(L^\infty(X)\otb N)$ such that $u(g)A_{g^{-1}}u(g)^*=A$. Putting them together yields
	\begin{equation}\label{eq:u(g)}
	(u(g)u_g)A(u(g)u_g)^*=u(g)A_{g^{-1}}u(g)^* = A.
	\end{equation}
	Since $u(g)\in L^\infty(X)\otb N$ and we already know that $L^\infty(X)\otb N\sub \cN_\cM(A)''$ we get $u_g\in\cN_\cM(A)''$ which finishes the proof of one implication.
	
	$1\Rightarrow 2$. First of all it follows from \cite{Dye63} that $A$, being Cartan in $\cM$, is Cartan in $L^\infty(X)\otb N$ as well (see also \cite{JP82}). So from Lemma~\ref{lem:Ax} it follows that $A_x$ is a Cartan subalgebra of $N$ for almost every $x\in X$ and we are left with showing that for every $g\in\Gam$, $A_{g^{-1}x}\sim_u A_x$ inside $N$ for almost every $x\in X$. Fix $g\in\Gam$. From \cite[Theorem~A.1]{Po01} it follows that it suffices to show that $A_{g^{-1}x}\emb_N A_x$. Since $A$ is a Cartan subalgebra of $\cM$ we know that $u_g\in\cN_{\cM}(A)''$. Since $\cN_{\cM}(A)$ is closed under products, this means that for a given $n\in\N$, we can find $\lam_1, \dots, \lam_m\in\C$ and $u_1, \dots, u_m\in\cN_{\cM}(A)$ such that
	\[
	\norm{\sum_{i=1}^m \lam_i u_i - u_g}^2_2\leq \frac{1}{n}.
	\]
	Writing $\sum_{i=1}^m \lam_i u_i=\sum_{h\in\Gam} c_h u_h$ with $c_h\in \LinfN$ we get that
	\[
	\frac{1}{n}\geq \norm{\sum_h c_h u_h - u_g}^2_2\geq \tau((c_g-1)^*(c_g-1)) = \int_X \tau_N((c_{g,x}-1_N)^*(c_{g,x}-1_N))\,dx,
	\]
	where we have written $c_g=\dint_X c_{g,x}\,dx$. In particular $c_{g,x}\neq 0$ on a set of measure at least $1-\frac{1}{n}$ and so for every such $x$ at least one of the $u_i=\sum_h b^i_h u_h$ satisfies $b_{g,x}^i\neq 0$ (where again $b^i_h=\dint_X b^i_{h,x}\,dx\in \LinfN$). Doing this for every $n\in\N$ implies that the set $E:=\{x\in X\mid \exists u=\sum_h b_h u_h\in\cN_\cM(A)\text{ such that } b_{g,x}\neq 0\}$ has full measure inside $X$. Now take $x\in E$ and $u=\sum_h b_h u_h\in\cN_\cM(A)$ such that $b_{g,x}\neq 0$. Let $\theta\in\Aut(A)$ be such that
	\begin{equation}\label{eq:uathau}
	ua=\theta(a)u, \qquad \text{for all } a\in A.
	\end{equation}
	Writing $a=\dint_X a_x\,dx$, $\theta(a)=\dint_X \theta(a)_x\,dx$ and substituting in equation \eqref{eq:uathau}, we find that for all $h\in\Gam$ we have $b_{h,x} a_{h^{-1}x} = \theta(a)_x b_{h,x}$ almost everywhere. In particular this holds for $g$. Using the polar decomposition for $b_{g,x}$ it then follows that there exists a nonzero partial isometry $v\in N$ such that
	\[
	v a_{g^{-1}x} = \theta(a)_x v, \qquad \text{for all } a\in A.
	\]
	This relation implies that $vA_{g\inv x}\sub A_x v$ and hence gives the desired intertwining $A_{g\inv x}\emb_N A_x$, finishing the proof of the other implication.
\end{proof}

\begin{lemma}\label{lem:AuB}
	Assume $\Gam\in\cC_{rss}$ and let $A\sub\cM$ be a Cartan subalgebra. Then $A$ is unitarily conjugate to a Cartan subalgebra of $\cM$ contained in $L^\infty(X)\otb N$.
\end{lemma}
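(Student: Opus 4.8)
The plan is to reduce, via the relatively strongly solid dichotomy, to the intertwining relation $A\emb_\cM\LinfN$, then to manufacture from it a maximal abelian subalgebra of $\cM$ inside $\LinfN$ to which $A$ is unitarily conjugate. For the reduction, I would first observe that $A$, being abelian, is amenable, hence amenable relative to $\LinfN$ inside $\cM$: a hypertrace for $A$ on $\cB(L^2(\cM))$ restricts to an $A$-central state on $\ip{\cM}{e_{\LinfN}}$ equal to $\tau$ on $\cM$. Writing $\cM=(\LinfN)\cross\Gam$ and applying Definition~\ref{def:rss} with $B=\LinfN$, either $A\emb_\cM\LinfN$ or $\cN_\cM(A)''$ is amenable relative to $\LinfN$. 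Since $A$ is Cartan, $\cN_\cM(A)''=\cM$, which is not amenable relative to $\LinfN$ because $\Gam$ is non-amenable (relative amenability of $B\cross\Gam$ over $B$ would force $\Gam$ to be amenable). Hence $A\emb_\cM\LinfN$.

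Next I would convert this embedding into a maximal abelian subalgebra of $\cM$ lying inside $\LinfN$. By Theorem~\ref{thm:Popa} there are projections $p_0\in A$ and $q_0\in\LinfN$, a $^*$-homomorphism $\psi:p_0Ap_0\rarrow q_0(\LinfN)q_0$, and a nonzero partial isometry $v$ with $\psi(x)v=vx$ for all $x\in p_0Ap_0$; maximal abelianness of $A$ forces $v^*v\in A$. The image $\psi(p_0Ap_0)$ is abelian and, since $\Linf\sub\cZ(\LinfN)$, commutes with $\Linf$, so the algebra generated by $\psi(p_0Ap_0)$ and $\Linf$ is abelian; I would enlarge it to a maximal abelian subalgebra $B$ of $\LinfN$. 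The crucial structural input is the identity $(\Linf)'\cap\cM=\LinfN$, which holds because $\Gam\act X$ is free (so $\Linf$ is maximal abelian in $\Linf\cross\Gam$). As $\Linf\sub B$, this yields $B'\cap\cM\sub(\Linf)'\cap\cM=\LinfN$, whence $B'\cap\cM=B'\cap(\LinfN)=B$; that is, $B$ is maximal abelian in $\cM$. Since $\psi(p_0Ap_0)\sub B$, the partial isometry $v$ already witnesses $A\emb_\cM B$.

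Finally I would upgrade $A\emb_\cM B$ to unitary conjugacy, using that $A$ is regular in the factor $\cM$. Running a maximality and exhaustion argument in the spirit of Lemma~\ref{lem:meas} --- building maximal orthogonal families of projections in $A$ and $B$ whose corners are unitarily conjugate, and invoking \cite[Theorem~A.1]{Po01} (see also \cite[Theorem~C.3]{Va06}) to conjugate matching corners --- the relation $\cN_\cM(A)''=\cM$ lets me propagate the embedding across all of $A$, while finiteness of $\cM$ rules out a nonzero remainder. This produces a unitary $u\in\cM$ with $uAu^*\sub B$; as both algebras are maximal abelian, $uAu^*=B$. Since $A$ is Cartan and $B\sub\LinfN$, the algebra $B$ is then the desired Cartan subalgebra of $\cM$ contained in $\LinfN$ to which $A$ is unitarily conjugate.

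I expect the main obstacle to lie in the passage from local to global. The intertwiner of Theorem~\ref{thm:Popa} only controls the corner $p_0Ap_0$, so turning this into an honest maximal abelian subalgebra of $\cM$ inside $\LinfN$ hinges on adjoining $\Linf$ and invoking the relative commutant identity $(\Linf)'\cap\cM=\LinfN$; and then the final exhaustion must genuinely exhaust, which is where the regularity of $A$ (via $\cN_\cM(A)''=\cM$) and the finiteness of $\cM$ have to be combined carefully to rule out any surviving corner.
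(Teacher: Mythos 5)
Your first two steps are sound and essentially parallel the paper: the $\cC_{rss}$ dichotomy (with non-amenability of $\Gam$ excluding relative amenability of $\cM$ over $\LinfN$, \cite[Proposition~2.4]{OP07}) gives $A\emb_\cM\LinfN$, and since $\Linf$ is central in $\LinfN$ and maximal abelian in $\Linf\cross\Gam$, one can indeed arrange the target of the intertwining to be a maximal abelian subalgebra $B$ of $\cM$ sitting inside $\LinfN$. The genuine gap is in your last step. To pass from $A\emb_\cM B$ to unitary conjugacy you invoke \cite[Theorem~A.1]{Po01}, but that theorem requires \emph{both} algebras to be Cartan subalgebras of $\cM$, whereas your $B$ is only maximal abelian: an arbitrary maximal abelian extension of $W^*(\psi(p_0Ap_0),\Linf)$ has no reason to be regular in $\cM$ --- by Lemma~\ref{lem:Cartaniff} its integral slices $B_x$ would have to be Cartan in $N$ almost everywhere and unitarily conjugate along $\Gam$-orbits, and your construction provides neither. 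Regularity of $A$ alone does not rescue the exhaustion: together with factoriality it does give $Ap'\emb_\cM B$ for \emph{every} nonzero projection $p'\in A$ (cf.\ \cite[Lemma~2.4(3)]{DHI16}), but the exhaustion needs $A(1-p)\emb_\cM B(1-q)$, i.e.\ an intertwiner landing in the \emph{unused} corner of $B$, and without normalizing unitaries for $B$ there is nothing to move intertwiners off the corner $Bq$ already consumed. Finiteness of $\cM$ cannot close this hole.

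In fact the principle you are implicitly using --- $A$ Cartan, $B$ maximal abelian, $A\emb_\cM B$ implies $A\sim_u B$ --- is false. Take $M=M_2(R)$, $A=\diag(A_0,A_0)$ and $B=\diag(A_0,S_0)$, where $A_0\sub R$ is Cartan and $S_0\sub R$ is a singular masa. Then $A$ is Cartan, $B$ is maximal abelian, and $A\emb_M B$ (the corners at $e_{11}$ are literally equal), yet $B$ is not Cartan: viewing $M_2(R)\cong(L^\infty(\{0,1\})\cross\Z/2\Z)\otb R$ as in the paper's discussion of \cite[Theorem~II.10]{FM77}, Lemma~\ref{lem:Cartaniff} would force the slice $S_0$ to be Cartan in $R$. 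So no unitary conjugates $A$ onto $B$, and indeed the exhaustion stalls at the remainder $Ae_{22}=A_0$ versus $Be_{22}=S_0$. This is exactly why the bulk of the paper's proof (its Claim) is devoted to upgrading the target before citing \cite[Theorem~A.1]{Po01}: using the central trace and \cite[Proposition~7.17]{SZ79} it cuts $q$ to a projection of constant trace $\frac{1}{n}$ over a positive measure set $X_0$, amplifies $\psi(Ap')$ by $n$ equivalent projections to a Cartan subalgebra of $L^\infty(X_0)\otb N$, verifies the orbitwise conjugacy condition of Lemma~\ref{lem:Cartaniff}, and extends over all of $X$ by ergodicity, so that $B$ becomes an honest Cartan subalgebra of $\cM$ contained in $\LinfN$ containing a corner of a conjugate of $A$. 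Your proof is missing precisely this construction.
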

\begin{proof}
	Applying the dichotomy in Definition~\ref{def:rss} for $\Gam\in\cC_{rss}$ we get that either $A\emb_{\cM} L^\infty(X)\otb N$ or $\cN_\cM(A)''=\cM$ is amenable relative to $L^\infty(X)\otb N$. Since $\Gam$ is non-amenable the latter is not possible (see for instance \cite[Proposition~2.4]{OP07}). We conclude that we can find projections $p\in A$, $q\in L^\infty(X)\otb N$, a nonzero partial isometry $v\in q\cM p$ and a $^*$-homomorphism $\psi: Ap\rarrow q(L^\infty(X)\otb N)q$ such that $\psi(a)v=va$ for all $a\in Ap$, $v^*v=p$, and $vv^*\in\psi(Ap)'\cap q\cM q$. Moreover, by \cite[Lemma~1.5]{Io11}, we can assume that $\psi(Ap)$ is maximal abelian in $q(L^\infty(X)\otb N)q$. In particular this means that $L^\infty(X)q\sub\psi(Ap)$ and taking relative commutants we get $\psi(Ap)'\cap q\cM q\sub (L^\infty(X)q)'\cap q\cM q=q(L^\infty(X)\otb N)q$. Since $\psi(Ap)$ is maximal abelian in $q(L^\infty(X)\otb N)q$ it then follows that
	\[
	\psi(Ap)'\cap q\cM q = \psi(Ap)'\cap q(L^\infty(X)\otb N)q = \psi(Ap).
	\]
	As $vv^*\in\psi(Ap)'\cap q\cM q$, we get $vv^*\in\psi(Ap)$ and so after possibly replacing $q$ by a subprojection we can assume that $vv^*=q$. Hence $\Ad(v):p\cM p\rarrow q\cM q$ is an isomorphism and since $Ap$ is a Cartan subalgebra of $p\cM p$ we conclude that $\psi(Ap)=vApv^*$ is a Cartan subalgebra of $q\cM q$.
	
	\begin{claim}
		There is a projection $p'\in Ap$ such that $\psi(Ap')\sub B$ where $B$ is a Cartan subalgebra of $\cM$ contained in $\LinfN$.
	\end{claim}
	\begin{proofofclaim}
		Note that the central trace of $q=\dint_X q_x\,dx\in L^\infty(X)\otb N$ is given by $\ctr(q)=\dint_X \tau_N(q_x)\,dx\in L^\infty(X)$. Proposition~7.17 in \cite{SZ79} tells us that for any $f\in L^\infty(X)$ with $f\leq \ctr(q)$, there exists a projection $q'\leq  q$ in $\psi(Ap)$ such that $\ctr(q')=f$. Also, since $q$ is nonzero, there exists $n\in \N$ such that $\mu(X_0)>0$, where
		\[
		X_0:=\{x\in X\mid \tau(q_x)\geq \frac{1}{n}\}.
		\]
		Consider now $f=\frac{1}{n}\een_{X_0}$ and take $q'\in\psi(Ap)$ such that $q'\leq q$ and $\ctr(q')=f$. In particular we have $\tau(q'_x)=\frac{1}{n}$ for $x\in X_0$ and $\tau(q'_x)=0$ otherwise.
		
		\vspace{3pt}
		
		Let now $p'=\psi^{-1}(q')$. Denoting the restriction of $\psi$ still by $\psi$ we get an injective homomorphism $\psi:Ap'\rarrow q'(L^\infty(X)\otb N)q'$ such that $\psi(Ap')=\dint_X \tilde{A}_x\,dx$ is a Cartan subalgebra of $q'\cM q'$. It then follows from Lemma~\ref{lem:Cartaniff} that $\tilde{A}_x$ is a Cartan subalgebra of $q'_xNq'_x$ for almost every $x\in X_0$. Put $p_1:=q'$ and consider $(1-p_1)(L^\infty(X_0)\otb N)(1-p_1)$. Applying again \cite[Proposition~7.17]{SZ79} we can find a projection $p_2\leq 1-p_1$ such that $\ctr(p_2)=\frac{1}{n}\een_{X_0}$. Continuing this we in the end find mutually orthogonal projections $p_1, \dots, p_n\in L^\infty(X_0)\otb N$ such that $\ctr(p_i)=\frac{1}{n}\een_{X_0}$ for every $i$.
		
		\vspace{3pt}
		
		In particular $p_1,\dots,p_n$ are equivalent and we can take partial isometries $u_i$ such that $u_i^*u_i=p_1$ and $u_iu_i^*=p_i$. Define
		\[
		\tilde{B}:=\oplus_{i=1}^n u_i\psi(Ap')u_i^*.
		\]
		Then $\tilde{B}=\dint_X \tilde{B}_x\,dx$ is a Cartan subalgebra of $L^\infty(X_0)\otb N$.
		
		\vspace{3pt}
		
		We want to upgrade this to a Cartan subalgebra of $\cM$. For this we note that the proof of the implication $1\Rightarrow 2$ in Lemma~\ref{lem:Cartaniff} goes through for a Cartan inclusion $A\sub q\cM q$ for some projection $q\in L^\infty(X)\otb N$. In the above situation this implies that we also had $\tilde{A}_{g^{-1}x}\emb_N \tilde{A}_x$ for almost every $x$ when both sides are nonzero and hence the same holds for $\tilde{B}_x$ instead of $\tilde{A}_x$.
		
		\vspace{3pt}
		
		Summarizing, we now have a measurable map $X_0\rarrow \Cartan(N): x\mapsto \tilde{B}_x$ such that $\tilde{B}_{g^{-1}x}\sim_u \tilde{B}_x$ inside $N$ for almost every $x\in X_0\cap gX_0$. Using the ergodicity of the action we can then extend this to a map
		\[
		X\rarrow \Cartan(N): x\mapsto B_x
		\]
		with the same properties. (A way to explicitly write this down is the following: Enumerate $\Gam:=\{e=g_1, g_2, \dots\}$ and send $x\in X$ to the Cartan subalgebra associated to $g_ix$ where $i$ is minimal among $\{n\in\N\mid g_nx\in X_0\}$.) Applying Lemma~\ref{lem:Cartaniff} again this implies that $B:=\dint_X B_x\,dx$ is a Cartan subalgebra of $\cM$ contained in $\LinfN$ which by construction contains $\psi(Ap')$, thus finishing the proof of the claim.
	\end{proofofclaim}

	\vspace{3pt}

	It follows from the claim that $A\emb_\cM B$, where $A$ and $B$ are both Cartan subalgebras of $\cM$. Hence $A$ and $B$ are unitarily conjugate by \cite[Theorem~A.1]{Po01}, which finishes the proof of the lemma.
\end{proof}

\begin{proof}[Proof of Theorem~\ref{thm:structure}]
	If $A$ is a Cartan subalgebra of $\cM$, then it follows from Lemma~\ref{lem:AuB} that $A$ is unitarily conjugate to a Cartan subalgebra $B$ of $\cM$ which is contained in $\LinfN$. It follows from Lemma~\ref{lem:Cartaniff} that $B$ has the desired form. The other implication is immediate from Lemma~\ref{lem:Cartaniff}.
\end{proof}

\section{A non-classifiability result for the equivalence relation of unitary conjugacy}\label{sec:nonclass}

Recall that for any Polish group $G$ acting continuously on a Polish space $Y$ we write $\cR(G\act Y)$ for its orbit equivalence relation. Consider a countable group $\Gam$, a standard probability space $(X,\mu)$ and a free ergodic pmp action $\Gam\act X$. Let $N$ be a \II factor and consider $\cM:=(L^\infty(X)\cross\Gam)\otb N$. Recall the equivalence relation $E_0$ on $\{0,1\}^\N$ where ${\bf x}E_0{\bf y}$ if there exists $N\in\N$ such that $x_n=y_n$ for all $n\geq N$. Theorem~\ref{thm:nonclass} says that if the relation of Cartan subalgebras of $N$ up to unitary conjugacy is not smooth and the action $\Gam\act X$ is not strongly ergodic, then the equivalence relation of Cartan subalgebras of $\cM$ up to unitary conjugacy cannot be classified by countable structures. The proof consists of two different parts. First of all we will use the structural results from section~\ref{sec:structure} to reduce the problem to studying the space of homomorphisms $\Hom(\cR(\Gam\act X),\cR(\cU(N)\act\Cartan(N)))$. Secondly we will prove that given any ergodic but not strongly ergodic pmp action $\Gam\act X$, the space of homomorphisms $\Hom(\cR(\Gam\act X),E_0)$ is not classifiable by countable structures. Using the Borel reduction $E_0\leq_B\cR(\cU(N)\act\Cartan(N))$ the desired result will then easily follow.

\begin{convention}
	All standard Borel spaces will come equipped with a Borel measure, and we will be identifying Borel functions almost everywhere.
\end{convention}

Given Polish spaces $X$, $Y$, and a measure $\mu$ on $X$, we write $B(X,Y)$ for the set of all Borel maps from $X$ to $Y$, identified $\mu$-almost everywhere. With the $\sigma$-algebra generated by the functions $f\mapsto \mu(A\cap f\inv(B))$ for $A\sub X$ and $B\sub Y$ Borel, this becomes a standard Borel space. We now first of all note that given two Borel equivalence relations $E$, $F$ on $X$, $Y$ respectively, also $\Hom(E,F)$ is a standard Borel space. One way to see this is the following. Since $E$ is by assumption a Borel subset of $X\times X$, hence a standard Borel space, $B(E,Y\times Y)$ is a standard Borel space as well. Identifying $\Hom(E,F)$, $B(E,F)$ and $B(X,Y)$ with the image of their canonical embedding into $B(E,Y\times Y)$, it is then easy to see that $\Hom(E,F)=B(E,F)\cap B(X,Y)$. We conclude that $\Hom(E,F)$ is Borel, being the intersection of two Borel sets.

\begin{lemma}\label{lem:eqrel}
	The equivalence relation $(\Hom(\cR(\Gam\act X),\cR(\cU(N)\act\Cartan(N))),\sim_u)$ is Borel reducible to $\cR(\cU(\cM)\act\Cartan(\cM))$, where for $\varphi,\psi\in \Hom(\cR(\Gam\act X),\cR(\cU(N)\act\Cartan(N))$ we let $\varphi\sim_u \psi$ if and only if $\varphi(x)\sim_u \psi(x)$ for almost every $x\in X$. Moreover, if $\Gam\in\cC_{rss}$, the aforementioned equivalence relations are Borel bi-reducible.
\end{lemma}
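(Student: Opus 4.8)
The plan is to exhibit the map sending a homomorphism $\varphi$ to the direct integral $\dint_X\varphi(x)\,dx$ as the Borel reduction, check both implications of the reduction property, and then invert it up to unitary conjugacy using Theorem~\ref{thm:structure}. Define $\Theta\colon\varphi\mapsto B_\varphi:=\dint_X\varphi(x)\,dx\sub\LinfN$. Since $\varphi$ takes values in $\Cartan(N)$ and, being a homomorphism of the countable orbit relation $\cR(\Gam\act X)$, satisfies $\varphi(g^{-1}x)\sim_u\varphi(x)$ in $N$ for a.e.\ $x$ and every $g\in\Gam$, Lemma~\ref{lem:Cartaniff} gives $B_\varphi\in\Cartan(\cM)$; that $\Theta$ is Borel follows from the measurable structure on direct integrals. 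For the forward implication, if $\varphi\sim_u\psi$, i.e.\ $\varphi(x)\sim_u\psi(x)$ a.e., then Lemma~\ref{lem:meas} (with $f_1=\varphi$, $f_2=\psi$) produces a unitary of $\LinfN\sub\cM$ conjugating $B_\varphi$ to $B_\psi$, so $B_\varphi\sim_u B_\psi$ in $\cM$.

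The substantive step is the converse: if $A:=B_\varphi$ and $B:=B_\psi$ are unitarily conjugate in $\cM$, then $\varphi(x)\sim_u\psi(x)$ for a.e.\ $x$. Here I would mimic the computation in the $(1)\Rightarrow(2)$ part of Lemma~\ref{lem:Cartaniff}. Fix $u\in\cU(\cM)$ with $uAu^*=B$ and expand $u=\sum_{h\in\Gam}b_hu_h$ with $b_h\in\LinfN$. Writing the intertwining relation $ua=(uau^*)u$ for $a=\dint_X a_x\,dx\in A$ and matching the coefficient of each $u_h$ gives $b_h\,\sigma_h(a)=(uau^*)b_h$, which on slices reads $b_{h,x}\,a_{h^{-1}x}=(uau^*)_x\,b_{h,x}$ for a.e.\ $x$. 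From $uu^*=1$ one computes $\sum_h b_hb_h^*=1$ in $\LinfN$, hence $\sum_h b_{h,x}b_{h,x}^*=1_N$ a.e., so the set $\bigcup_h\{x\mid b_{h,x}\neq0\}$ has full measure. At a point $x$ with $b_{h,x}\neq0$ the polar decomposition of $b_{h,x}$ yields a nonzero partial isometry intertwining $\varphi(h^{-1}x)=A_{h^{-1}x}$ into $B_x=\psi(x)$, so $A_{h^{-1}x}\emb_N B_x$; combined with $A_{h^{-1}x}\sim_u A_x$ (Lemma~\ref{lem:Cartaniff} applied to the Cartan subalgebra $A$) this gives $A_x\emb_N B_x$, and since both are Cartan in $N$, \cite[Theorem~A.1]{Po01} upgrades this to $A_x\sim_u B_x$. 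As $\Gam$ is countable and the index set above has full measure, $\varphi(x)\sim_u\psi(x)$ a.e.

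I expect this converse to be the main obstacle: the point is that the conjugating unitary lives in $\cM$ rather than in $\LinfN$, so it must first be decomposed over the group $\Gam$ and only then disintegrated over $X$, and one must track the shift $h^{-1}x$ introduced by $u_h$ before appealing to the fibrewise regularity of $A$. Everything up to this point works for an arbitrary countable group $\Gam$.

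Finally, for the bi-reducibility when $\Gam\in\cC_{rss}$, I would use that Theorem~\ref{thm:structure} makes $\Theta$ surjective onto $\Cartan(\cM)$ up to unitary conjugacy: every Cartan subalgebra of $\cM$ is unitarily conjugate to some $B_\varphi$. The set $W:=\{(A,\varphi)\mid A\sim_u B_\varphi\}\sub\Cartan(\cM)\times\Hom(\cR(\Gam\act X),\cR(\cU(N)\act\Cartan(N)))$ is analytic with full projection onto $\Cartan(\cM)$, so by measurable uniformization (Jankov--von Neumann) there is a section $A\mapsto\varphi_A$ with $A\sim_u B_{\varphi_A}$. Using the two implications already established, $A\mapsto\varphi_A$ reduces $\cR(\cU(\cM)\act\Cartan(\cM))$ to $(\Hom(\cdots),\sim_u)$: indeed $A\sim_u A'$ forces $B_{\varphi_A}\sim_u B_{\varphi_{A'}}$ and hence $\varphi_A\sim_u\varphi_{A'}$, and conversely. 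This is the only place $\Gam\in\cC_{rss}$ enters. The section is a priori only universally measurable, but this is harmless for the applications to non-classifiability and to the dichotomy of Theorem~\ref{thm:dichotomy}, and one expects it can be taken Borel by the usual refinements.
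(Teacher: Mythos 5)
Your proposal is correct and follows essentially the same route as the paper: the direct-integral map $\varphi\mapsto\dint_X\varphi(x)\,dx$ together with Lemmas~\ref{lem:Cartaniff} and \ref{lem:meas} gives the forward direction, the converse is the same Fourier-coefficient/polar-decomposition argument (the paper performs the polar decomposition globally in $\LinfN$ after applying $E_{\LinfN}$, rather than fibrewise, but this is cosmetic), and the moreover part uses Lemma~\ref{lem:AuB} plus measurable selection exactly as in the paper's Step~3. The measurability caveat you flag for the selection is present in the paper's own proof as well, which invokes \cite[Theorem~A.16]{Tak01}, so your treatment is no less rigorous than the original.
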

\begin{proof}
	We will write $\Cartan_{\LinfN}(\cM):=\{A\in\Cartan(\cM)\mid A\sub \LinfN\}$.
	
	\vspace{5pt}
	
	\emph{Step 1. $(\Hom(\cR(\Gam\act X),\cR(\cU(N)\act\Cartan(N))),\sim_u)\sim_B (\Cartan_{\LinfN}(\cM),\sim_u)$ where the latter equivalence relation is unitary conjugacy in $\cM$.}
	
	First of all we note that $\Cartan_{\LinfN}(\cM)$ is a standard Borel space. Indeed, we can write it as the intersection of $\Cartan(\cM)$ and $\vNa(\LinfN)$ inside $\vNa(\cM)$. The former is Borel by Proposition~\ref{prop:SV-Cartan} and the latter is Borel being the fixed points of the map $M\mapsto M\cap \LinfN$ which is Borel by \cite[Corollary~2]{Eff65}. Using Lemma~\ref{lem:Cartaniff}, given $A=\dint_X A_x\,dx\in \Cartan_{\LinfN}(\cM)$ we get a function $[f_A:x\mapsto A_x]\in \Hom(\cR(\Gam\act X),\cR(\cU(N)\act\Cartan(N)))$. Conversely, given such a function we can build a Cartan subalgebra $A=\dint_X f(x)\,dx$. Moreover, by Lemma~\ref{lem:meas}, we have that there exists $u\in\cU(\LinfN)$ such that $uAu^*=B$ if and only if $f_A\sim_u f_B$.
	
	Suppose now that we have $A,B\in \Cartan_{\LinfN}(\cM)$ and $u\in\cM$ such that $uAu^*=B$. To complete the proof of step 1, we need to show that we can replace $u$ by a unitary in $\LinfN$. Write $u=\sum_g b_gu_g$ with $b_g\in\LinfN$. Recall from \eqref{eq:u(g)} in the proof of $[2\Rightarrow 1]$ of Lemma~\ref{lem:Cartaniff}, that for every $g\in \Gam$ we can find a unitary $u(g)\in\cU(\LinfN)$ such that
	\[
	u_g^*u(g)^*Au(g)u_g = A.
	\]
	From the fact that $uAu^*=B$, we then get $uu_g^*u(g)^*Au(g) = Buu_g^*$. Applying the conditional expectation onto $\LinfN$ we get
	\[
	E_{\LinfN}(uu_g^*)[u(g)^*Au(g)]=B E_{\LinfN}(uu_g^*).
	\]
	Note that $E_{\LinfN}(uu_g^*)=b_g$, so by taking the polar decomposition we get a partial isometry $v\in\LinfN$ such that $vu(g)^*Au(g)=Bv$. Writing $v=\dint_X v_x\,dx$, $u(g)^*Au(g)=\dint_X [u(g)^*Au(g)]_x\,dx$ and $B=\dint_X B_x\,dx$ it follows that $v_x[u(g)^*Au(g)]_x=B_xv_x$. Since $u$ is a unitary we can find for almost every $x\in X$ an element $g\in \Gam$ such that $b_{g,x}\neq 0$, where we have written $b_g=\dint_X b_{g,x}\,dx$. Since $v_x\neq 0$ when $b_{g,x}\neq 0$, this means there exists for almost every $x\in X$ an element $g\in\Gam$ such that
	\[
	[u(g)^*Au(g)]_x \emb_N B_x.
	\]
	It follows that $A_x\emb_N B_x$ for almost every $x\in X$ and since $N$ is a factor we have $A_x\sim_u B_x$ inside $N$ for almost every $x\in X$. Applying Lemma~\ref{lem:meas} we thus get a unitary $u'\in\cU(\LinfN)$ such that $u'Au'^*=B$ and we conclude that for $A,B\in\Cartan_{\LinfN}(\cM)$, being unitarily conjugate inside $\cM$ is equivalent to being unitarily conjugate inside $\LinfN$.
	
	\vspace{5pt}
	
	\emph{Step 2. $(\Cartan_{\LinfN}(\cM),\sim_u)\leq_B \cR(\cU(\cM)\act\Cartan(\cM))$.}
	
	We have the inclusion map $i:\Cartan_{\LinfN}(\cM)\hookrightarrow \Cartan(\cM)$ for which we obviously have $A\sim_u B\iff i(A)\sim_u i(B)$.
	
	Step 1 and step 2 easily imply the first half of the lemma. For the moreover part, we conclude with the following.
	
	\vspace{5pt}

	\emph{Step 3. If $\Gam\in\cC_{rss}$, then $\cR(\cU(\cM)\act\Cartan(\cM))\leq_B (\Cartan_{\LinfN}(\cM),\sim_u)$.}
	
	For this direction we consider
	\begin{align*}
	\bB &:= \{(A,B)\in \Cartan(\cM)\times \Cartan_{\LinfN}(\cM)\mid A\sim_u B\}\\
	&= \{(A,B)\in \Cartan(\cM)\times \Cartan(\cM)\mid A\sim_u B\} \cap \Cartan(\cM)\times \Cartan_{\LinfN}(\cM)
	\end{align*}
	which is a Borel subset of $\Cartan(\cM)\times \Cartan_{\LinfN}(\cM)$ by Proposition~\ref{prop:SV-Cartan}(2) and the proof of step 1. Also, it follows from Lemma~\ref{lem:AuB} that $\pi_1(\bB)=\Cartan(\cM)$ where $\pi_1$ is the projection onto the first coordinate. Hence by \cite[Theorem~A.16]{Tak01} there exists a measurable function $\Psi:\Cartan(\cM)\rarrow\Cartan_{\LinfN}(\cM)$ such that $(A,\Psi(A))\in\bB$, i.e. $A\sim_u\Psi(A)$, for every $A\in\Cartan(\cM)$, which finishes step 3 and the proof of the moreover part of the lemma.	
\end{proof}

Consider now $\Hom(\cR(\Gam\act X),E_0)$ with the equivalence relation given by $\varphi\sim\psi$ if and only if $\varphi(x)E_0\psi(x)$ almost everywhere. We will prove the following more precise version of Theorem~\ref{thm:nonstrongE_01}.

\begin{theorem}\label{thm:nonstrongE0}
	Let $\Gam$ be a countable group, $(X,\mu)$ a standard probability space and $\Gam\act X$ an ergodic pmp action that is not strongly ergodic. Then there exists a map $f:(0,1)^\N\rarrow\Hom(\cR(\Gam\act X),E_0)$ satisfying all conditions from Proposition~\ref{prop:crit}. In particular $(\Hom(\cR(\Gam\act X),E_0),\sim)$ is not classifiable by countable structures.
\end{theorem}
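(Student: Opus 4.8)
The plan is to build the map $f$ explicitly from a parametrized family of almost invariant sequences and then read off the two conditions of Proposition~\ref{prop:crit} from elementary measure estimates. First I would apply Theorem~\ref{thm:JS}: since $\Gam\act X$ is ergodic but not strongly ergodic, there is an ergodic pmp action $\Z\act(Y,\nu)$, generated by a transformation $T$, together with a factor map $\theta:X\rarrow Y$. All the real work happens on $Y$, where the single generator $T$ makes almost invariant sequences easy to produce, and the result is transported back to $X$ via Lemma~\ref{lem:aiseq}. Concretely, using Rokhlin towers (or, after invoking Dye's theorem \cite{Dye59} to pass to a concrete model such as the dyadic odometer, its tail sets as in \cite{JS87}) I would fix for each $k$ a tower of height $h_k$ with base $R_k$, i.e. disjoint sets $R_k,TR_k,\dots,T^{h_k-1}R_k$ exhausting all but an $\eps_k$-fraction of $Y$, chosen so that $h_k\uparrow\infty$ and $\sum_k(\nu(R_k)+\eps_k)<\infty$. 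For $\mathbf t=(t_k)_k\in(0,1)^\N$ set $B_k^{\mathbf t}:=\bigcup_{j=0}^{\lfloor t_kh_k\rfloor-1}T^jR_k$, the bottom $\lfloor t_kh_k\rfloor$ levels of the $k$-th tower. Since $TB_k^{\mathbf t}$ and $B_k^{\mathbf t}$ differ only in the lowest and topmost occupied level together with the tower error, $\nu(TB_k^{\mathbf t}\Del B_k^{\mathbf t})\leq 2\nu(R_k)+\eps_k$, so $\nu(\bigcup_{k\geq n}TB_k^{\mathbf t}\Del B_k^{\mathbf t})\rarrow 0$. Setting $A_k^{\mathbf t}:=\theta\inv(B_k^{\mathbf t})$, Lemma~\ref{lem:aiseq} promotes this to $\lim_n\mu(\bigcup_{k\geq n}gA_k^{\mathbf t}\Del A_k^{\mathbf t})=0$ for every $g\in\Gam$, which is exactly what guarantees that $f(\mathbf t):=\bigl[x\mapsto(\een_{A_k^{\mathbf t}}(x))_k\bigr]$ is a well-defined homomorphism from $\cR(\Gam\act X)$ to $E_0$; and $f$ is Borel because $\mathbf t\mapsto\lfloor t_kh_k\rfloor$ is Borel.

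Next I would record the single estimate that drives both conditions. Since $\theta$ preserves measure, $\mu(A_k^{\mathbf t}\Del A_k^{\mathbf s})=\nu(B_k^{\mathbf t}\Del B_k^{\mathbf s})=|\lfloor t_kh_k\rfloor-\lfloor s_kh_k\rfloor|\,\nu(R_k)$, which lies within $\nu(R_k)$ of $|t_k-s_k|\,h_k\nu(R_k)$ and hence, as $h_k\nu(R_k)\in(1-\eps_k,1]$, within $\eps_k+\nu(R_k)$ of $|t_k-s_k|$. For condition (1), if $\mathbf t-\mathbf s\in\ell^1$ then $\sum_k\mu(A_k^{\mathbf t}\Del A_k^{\mathbf s})\leq\norm{\mathbf t-\mathbf s}_1+\sum_k(\nu(R_k)+\eps_k)<\infty$, so by Borel--Cantelli almost every $x$ lies in only finitely many $A_k^{\mathbf t}\Del A_k^{\mathbf s}$; that is, $f(\mathbf t)(x)E_0f(\mathbf s)(x)$ a.e., i.e. $f(\mathbf t)\sim f(\mathbf s)$. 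For condition (2), suppose $f(\mathbf t)\sim f(\mathbf s)$. Then for a.e. $x$ the indicators $\een_{A_k^{\mathbf t}}(x)$ and $\een_{A_k^{\mathbf s}}(x)$ agree for all large $k$, so $\een_{A_k^{\mathbf t}\Del A_k^{\mathbf s}}\rarrow 0$ a.e.; dominated convergence then forces $\mu(A_k^{\mathbf t}\Del A_k^{\mathbf s})\rarrow 0$, and the estimate above yields $|t_k-s_k|\rarrow 0$. Consequently, if some set $A\sub(0,1)^\N$ were mapped by $f$ into a single $\sim$-class, then fixing $\mathbf t_0\in A$ we would have $\mathbf t-\mathbf t_0\in c_0$ for every $\mathbf t\in A$, so $A$ would sit inside the coset $\mathbf t_0+c_0$. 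As $c_0$ is meager in $\R^\N$, this coset is meager, hence $A$ is meager and in particular not comeager. This is condition (2), and Proposition~\ref{prop:crit} then yields the non-classifiability.

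The routine ingredients are the measure bookkeeping and the Baire-category argument; the part that needs the most care is arranging a single family of parameters $(h_k,R_k,\eps_k)$ that simultaneously delivers the strong almost invariance on $Y$ needed for the transfer in Lemma~\ref{lem:aiseq}, the summability making Borel--Cantelli applicable in condition (1), and the two-sided comparison $\nu(B_k^{\mathbf t}\Del B_k^{\mathbf s})\approx|t_k-s_k|$ underlying condition (2), all while keeping $\mathbf t\mapsto f(\mathbf t)$ Borel into the standard Borel space $\Hom(\cR(\Gam\act X),E_0)$. I expect the main conceptual obstacle to be checking that $f(\mathbf t)$ genuinely lands in $\Hom(\cR(\Gam\act X),E_0)$ for every $\mathbf t$: this is precisely where non-strong ergodicity enters essentially, through Theorem~\ref{thm:JS} and Lemma~\ref{lem:aiseq}, since without a nontrivial $\Z$-factor one could not produce nontrivial almost invariant sequences at all, and a strongly ergodic action would force every such $f$ to be constant up to $\sim$. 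Once $f$ is seen to be a well-defined Borel homomorphism, conditions (1) and (2) fall out of the density estimates above.
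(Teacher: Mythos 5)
Your proposal is correct in substance but takes a genuinely different route from the paper's. Both arguments share the same skeleton: use Theorem~\ref{thm:JS} to obtain a $\Z$-factor $\theta:X\rarrow Y$, build a family of almost invariant sequences on the factor indexed by $(0,1)^\N$, pull them back through Lemma~\ref{lem:aiseq} (including its \emph{moreover} part, which is exactly what makes $x\mapsto(\een_{A_k^{\bf t}}(x))_k$ a genuine homomorphism to $E_0$), and feed the resulting map into Proposition~\ref{prop:crit}. The difference lies in the concrete model. The paper invokes Dye's theorem \cite{Dye59} (and \cite{OW80}) to get a factor map onto the action $\oplus_\N\Z\act\prod_\N S^1$ by irrational rotations and takes $B^{\bf t}_n$ to be a half-circle in the $n$-th coordinate starting at $e^{2\pi i t_n}$; this yields the exact identity $\nu(B^{\bf s}_n\Del B^{\bf t}_n)=2\min\{\abs{s_n-t_n},1-\abs{s_n-t_n}\}$, eventual \emph{exact} invariance under each group element, and even continuity of $f$. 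You instead stay on the $\Z$-factor and use Rokhlin towers, taking $B^{\bf t}_k$ to be the bottom $\lfloor t_kh_k\rfloor$ levels of the $k$-th tower; this avoids Dye's theorem altogether (the Rokhlin lemma suffices), at the cost of only approximate identities with summable errors $\nu(R_k)+\eps_k$ --- which is all Proposition~\ref{prop:crit} needs, since $f$ is only required to be Borel, not continuous. Your version also has no wrap-around on the circle, so your condition (2) gives the cleaner implication $f({\bf s})\sim f({\bf t})\Rightarrow{\bf s}-{\bf t}\in c_0$, whereas the paper must exclude $\abs{s_n-t_n}\rarrow 1$ as well. Note that both proofs rely on the factor $(Y,\nu)$ being non-atomic (you for Rokhlin towers of arbitrary height, the paper for Dye's theorem); this is what \cite{JS87} actually provides.

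Two small points need patching. First, Lemma~\ref{lem:aiseq} requires $\lim_n\nu(\cup_{k\geq n}sB^{\bf t}_k\Del B^{\bf t}_k)=0$ for \emph{every} $s\in\Z$, not just the generator; the same tower estimate gives $\nu(T^mB^{\bf t}_k\Del B^{\bf t}_k)\leq 2\abs{m}\nu(R_k)$, so this is immediate, but it should be said. Second, your final category argument is carried out in the wrong space: $(0,1)^\N$ is in fact \emph{nowhere dense} in $\R^\N$ (its closure $[0,1]^\N$ has empty interior in the product topology), so meagerness of the coset ${\bf t}_0+c_0$ in $\R^\N$ does not formally imply that its trace on $(0,1)^\N$ is meager in the subspace $(0,1)^\N$, which is what condition (2) of Proposition~\ref{prop:crit} demands. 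The intended conclusion is true and the fix is one line: the trace is contained in $\bigcup_{N\in\N}\{{\bf t}\in(0,1)^\N\mid\forall n\geq N:\abs{t_n-t_{0,n}}\leq 1/4\}$, a countable union of closed sets each with empty interior in $(0,1)^\N$, since any nonempty basic open set constrains only finitely many coordinates. With these repairs your argument is complete.
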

\begin{proof}
	Consider the action $\oplus_\N \Z\act (\Pi_\N S^1,\nu)$ where the $i^\text{th}$ component of $\oplus_\N \Z$ acts on the $i^\text{th}$ component of $\Pi_\N S^1$ by irrational rotation by $\al_i\in\R\backslash\Q$ and where $\nu:=\ot_\N\,m$ for $m$ the normalized Lebesgue (probability) measure on $S^1$. For $r\in (0,1)$ define the subset $C_r\sub S^1$
	\[
	C_r:=\left\{ e^{2\pi i (r+t)}\mid t\in [0,1/2]\right\},
	\]
	i.e. $C_r$ is half of the circle starting from the point $e^{2\pi i r}$. In the following, given any measure space $Y$ we will write $\cB(Y)$ for the space of all Borel subsets of $Y$, and we will equip it with the pseudo-metric given by the measure of the symmetric difference. Consider the function
	\begin{equation}\label{eq:vphi}
	\varphi: (0,1)^\N\rarrow \{\text{sequences of Borel subsets of } \Pi_\N S^1\}=\prod_\N \cB(\prod_\N S^1):{\bf t}=(t_n)_n\mapsto (B^{\bf t}_n)_n
	\end{equation}
	where
	\[
	B^{\bf t}_n:=\underbrace{S^1\times\dots\times S^1}_{n-1}\times C_{t_n}\times S^1\times\dots.
	\]
	The sequences $(B^{\bf t}_n)_n$ are clearly almost invariant for the action of $\oplus_\N \Z$ and satisfy $\nu(B^{\bf t}_n)=\frac{1}{2}$ for every $n$. Also, for any element $a=(a_n)_n\in\oplus_\N \Z$ we have $\nu(aB^{\bf t}_n\Del B^{\bf t}_n)=0$ for $n$ sufficiently large. Moreover, we note that
	\begin{equation}\label{eq:measureBsBt}
	\nu(B^{\bf s}_n\Del B^{\bf t}_n)=m(C_{s_n}\Del C_{t_n})=2\min\{\abs{s_n-t_n},1-\abs{s_n-t_n}\}.
	\end{equation}
	
	Using Theorem~\ref{thm:JS} together with the lack of strong ergodicity we get a factor map from the action $\Gam\act X$ to the action of $\Z$ on some standard probability space. By \cite{Dye59} any two ergodic $\Z$-actions are orbit equivalent and so we also get a factor map $\theta:X\rarrow \Pi_\N S^1$ for the above actions of $\Gam$ and $\oplus_\N \Z$ (and in fact to any ergodic action of any amenable group on some standard probability space by \cite[Theorem~6]{OW80}). Lifting the almost invariant sequence $(B^{\bf t}_n)_n$ for $\oplus_\N \Z\act (\Pi_\N S^1,\nu)$ we get by Lemma~\ref{lem:aiseq} an almost invariant sequence $(A^{\bf t}_n)_n:=\theta^{-1}(B^{\bf t}_n)_n$ for the action $\Gam\act (X,\mu)$. Following \cite{JS87} we then consider the map
	\begin{equation}\label{eq:bfA}
	A^{\bf t}:X\rarrow \{0,1\}^\N: x\mapsto (\een_{A^{\bf t}_n}(x))_n.
	\end{equation}
	We claim that for all $g\in\Gam$ we have $A^{\bf t}(x)E_0A^{\bf t}(gx)$ for almost every $x\in X$. Indeed the set of $x\in X$ satisfying the claim has measure
	\begin{equation}\label{eq:goodxs}
	\mu(\{x\in X\mid \exists N\in\N, \forall n\geq N: x\notin gA^{\bf t}_n\Del A^{\bf t}_n\})=\mu\left(\bigcup_{N\in\N}\left(\bigcap_{n\geq N} X\backslash(gA^{\bf t}_n\Del A^{\bf t}_n)\right)\right)=1.
	\end{equation}
	For the last equality we used the moreover part of Lemma~\ref{lem:aiseq} and the fact that for every $a\in\oplus_\N \Z$, $\nu(aB^{\bf t}_n\Del B^{\bf t}_n)=0$ for $n$ sufficiently large to deduce that $\lim_n \mu(\cup_{k\geq n} gA^{\bf t}_k\Del A^{\bf t}_k)=0$. In terms of the equivalence relations this means that $A^{\bf t}$ is a homomorphism (on a co-null subset of $X$) from $\cR(\Gam\act X)$ to $E_0$. Recall that for two such elements we have $A^{\bf s}\sim A^{\bf t}$ if and only if $A^{\bf s}(x)E_0A^{\bf t}(x)$ for almost every $x\in X$.
	
	Altogether we now have a map $f:(0,1)^\N\rarrow \Hom(\cR(\Gam\act X),E_0): {\bf t}\mapsto f({\bf t}):=A^{\bf t}$. We claim that $f$ satisfies all the conditions from Proposition~\ref{prop:crit}. First of all we need that $f$ is a Borel map. For this consider $(0,1)^\N$ with the metric given by
	\[
	d((s_n)_n,(t_n)_n) = \sum_{n\in\N} 2^{-n}\abs{s_n-t_n},
	\]
	and the space $\Map(X,\{0,1\}^\N)$ of measurable functions from $X$ to $\{0,1\}^\N$ with the metric given by
	\[
	d(g,h)=\int_X \sum_{n\in\N} 2^{-n}\abs{g(x)_n-h(x)_n}\,dx.
	\]
	It is then easy to see that the map $(0,1)^\N\rarrow \Map(X,\{0,1\}^\N):{\bf t}\mapsto A^{\bf t}$ is actually continuous. The two conditions of Proposition~\ref{prop:crit} follow easily from the first part of the proof:
	
	\vspace{3pt}
	
	1. Suppose we have ${\bf s}$, ${\bf t}\in (0,1)^\N$ such that ${\bf s}-{\bf t}\in \ell^1$. Similar to \eqref{eq:goodxs} above, the points $x\in X$ such that $A^{\bf s}(x)E_0A^{\bf t}(x)$ are given by
	\[
	\{x\in X\mid \exists N\in\N, \forall n\geq N: x\notin A^{\bf s}_n\Del A^{\bf t}_n\}=\bigcup_{N\in\N}\left(\bigcap_{n\geq N} X\backslash(A^{\bf s}_n\Del A^{\bf t}_n)\right).
	\]
	By \eqref{eq:measureBsBt} we have $\mu(X\backslash(A^{\bf s}_n\Del A^{\bf t}_n)\geq 1-2\abs{s_n-t_n}$. Since ${\bf s}-{\bf t}\in \ell^1$ it follows easily that $A^{\bf s}(x)E_0A^{\bf t}(x)$ for almost every $x\in X$, i.e. $f({\bf s})\sim f({\bf t})$.
	
	\vspace{3pt}
	
	2. Suppose we have ${\bf s}$, ${\bf t}\in (0,1)^\N$ such that $\abs{s_n-t_n}\not\rarrow 0$ and $\abs{s_n-t_n}\not\rarrow 1$. Then $\mu(A^{\bf s}_n\Del A^{\bf t}_n)\not\rarrow 0$ by \eqref{eq:measureBsBt}, so we can find $\eps>0$ such that $\mu(A^{\bf s}_n\Del A^{\bf t}_n)\geq\eps$ for infinitely many $n$. Since $X$ has measure 1, this means we can find a set $Y$ of positive measure such that every element $y\in Y$ lies in infinitely many of the sets $A^{\bf s}_n\Del A^{\bf t}_n$ and thus $A^{\bf s}(y)\cancel{E}_0 A^{\bf t}(y)$. Combining this with the observation that any comeager subset of $(0,1)^\N$ contains elements ${\bf s}$ and ${\bf t}$ such that $\abs{s_n-t_n}\not\rarrow 0,1$ finishes the proof of the theorem.	
\end{proof}

\begin{proof}[Proof of Theorem~\ref{thm:nonclass}]
	Recall that $\varphi,\psi\in\Hom(\cR(\Gam\act X),\cR(\cU(N)\act\Cartan(N)))$ are equivalent if and only if $\varphi(x)\sim_u\psi(x)$ almost everywhere. Given the map $f:(0,1)^\N\rarrow\Hom(\cR(\Gam\act X),E_0)$ from Theorem~\ref{thm:nonstrongE0} we can use the given Borel reduction $\beta:\{0,1\}^\N\rarrow\Cartan(N)$ from $E_0$ to $\cR(\cU(N)\act\Cartan(N))$ to construct
	\[
	g:(0,1)^\N\rarrow\Hom(\cR(\Gam\act X),\cR(\cU(N)\act\Cartan(N))):{\bf t}\mapsto g({\bf t})=\beta(f({\bf t}))=\beta\circ A^{\bf t}.
	\]
	Since $\beta$ is a Borel reduction, it follows then immediately that $g$ satisfies all conditions from Proposition~\ref{prop:crit} as well. Together with Lemma~\ref{lem:eqrel}, this finishes the proof.	
\end{proof}

\begin{remark}
	More generally, we see that with exactly the same proof, one can get the following statement. Suppose we have a countable group $\Gam$, a standard probability space $(X,\mu)$ and an ergodic pmp action $\Gam\act X$ that is not strongly ergodic. Then for any non-smooth equivalence relation $E$ on a standard Borel space $Y$, the space of homomorphisms $\Hom(\cR(\Gam\act X),E)$ up to $E$-equivalence almost everywhere is not classifiable by countable structures.
\end{remark}

\begin{proof}[Proof of Theorem~\ref{thm:dichotomy}]
	It follows from Theorem~\ref{thm:nonclass} that $\cR(\cU(\cM)\act\Cartan(\cM))$ is not classifiable by countable structures when $\cR(\cU(N)\act\Cartan(N))$ is not smooth and $\Gam\act X$ is not strongly ergodic. Now assume $\cR(\cU(N)\act\Cartan(N))$ is smooth, i.e. there is a Borel map $f:\Cartan(N)\rarrow \R$ such that $C_1\sim_u C_2\iff f(C_1)=f(C_2)$. Then by steps 2 and 3 in Lemma~\ref{lem:eqrel} it is enough to show that $(\Cartan_{\LinfN}(\cM),\sim_u)$ is smooth. But given $A=\dint_X A_x\,dx\in\Cartan_{\LinfN}(\cM)$ we can associate to it the function $[x\mapsto f(A_x)]\in L(X,\mu,\R)$ which is a Polish space by \cite[ch.~19]{Kec10}. We already know that $A\sim_u B$ if and only if $A_x\sim_u B_x$ for almost every $x$. Hence $(\Cartan_{\LinfN}(\cM),\sim_u)$ is smooth, which finishes the proof.
\end{proof}

\section{Conjugacy by automorphisms}\label{sec:Aut}

In this section we aim for a non-classification result for Cartan subalgebras up to conjugacy by an automorphism similar to Theorem~\ref{thm:nonclass} for unitary conjugacy. More specifically we will prove the following theorem which will easily imply Theorem~\ref{thm:nonclassaut}.

\begin{theorem}\label{thm:nonclassaut2}
	Suppose $N$ is a \II factor such that
	\begin{itemize}
		\item $N$ has an irreducible regular amenable subfactor, i.e. an amenable subfactor $R\sub N$ such that $R'\cap N=\C 1$ and $\cN_N(R)''=N$,
		\item there exists a Borel map $\be:\{0,1\}^\N\rarrow \Cartan(N)$ such that
		\[
		xE_0y \quad \iff \quad \be(x)\sim_u\be(y) \quad \iff \quad \be(x)\sim_{sa}\be(y).
		\]
	\end{itemize}
	Let $\Gam\in\cC_{rss}$, $(X,\mu)$ be a standard probability space and $\Gam\act X$ be a free ergodic pmp action that is not strongly ergodic. Then the equivalence relation of Cartan subalgebras of $\cM=(\Linf\cross \Gam)\otb N$ up to conjugacy by an automorphism is not classifiable by countable structures.
\end{theorem}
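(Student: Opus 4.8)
The plan is to combine the structural reduction of Section~\ref{sec:structure} with the homomorphism construction of Theorem~\ref{thm:nonstrongE0} and the criterion of Proposition~\ref{prop:crit}, the only genuinely new ingredient being an automorphism analogue of Lemma~\ref{lem:eqrel}. By Lemma~\ref{lem:AuB} every Cartan subalgebra of $\cM$ is unitarily conjugate, hence conjugate by an automorphism, to one contained in $\LinfN$, so it suffices to prove non-classifiability of $\sim_a$ on $\Cartan_{\LinfN}(\cM)$. Using the homomorphisms $A^{\mathbf t}\in\Hom(\cR(\Gam\act X),E_0)$ produced in Theorem~\ref{thm:nonstrongE0} together with the hypothesised Borel map $\be\colon\{0,1\}^\N\rarrow\Cartan(N)$, I would define
\[
f\colon (0,1)^\N\rarrow \Cartan_{\LinfN}(\cM),\qquad f(\mathbf t):=\dint_X \be\big(A^{\mathbf t}(x)\big)\,dx .
\]
Since $A^{\mathbf t}$ is an $E_0$-homomorphism and $\be$ sends $E_0$ to $\sim_u$, the slices of $f(\mathbf t)$ are unitarily conjugate along $\Gam$-orbits, so Lemma~\ref{lem:Cartaniff} guarantees that $f(\mathbf t)$ is a Cartan subalgebra and Borelness follows as in Theorem~\ref{thm:nonstrongE0}. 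Condition~(1) of Proposition~\ref{prop:crit} is then immediate: if $\mathbf s-\mathbf t\in\ell^1$ then $A^{\mathbf s}\sim A^{\mathbf t}$, hence $\be(A^{\mathbf s}(x))\sim_u\be(A^{\mathbf t}(x))$ a.e., and Lemma~\ref{lem:meas} gives $f(\mathbf s)\sim_u f(\mathbf t)$, so in particular $f(\mathbf s)\sim_a f(\mathbf t)$.

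The heart of the argument is condition~(2), for which I would first establish a structural lemma controlling an arbitrary $\al\in\Aut(\cM)$ with $\al(f(\mathbf s))=f(\mathbf t)$, and this is where the irreducible regular amenable subfactor $R\sub N$ enters. The subalgebra $\Linf\otb R$ is amenable, satisfies $(\Linf\otb R)'\cap\cM=\Linf$, and its normalizer generates $\cM$ (it contains all $u_g$, all of $\cN_N(R)$ and $\Linf$). Consequently $\al(\Linf\otb R)$ is amenable with $\cN_\cM(\al(\Linf\otb R))''=\cM$; applying the dichotomy of Definition~\ref{def:rss} and ruling out relative amenability of $\cM$ over $\LinfN$ by non-amenability of $\Gam$ (as in Lemma~\ref{lem:AuB}, via \cite[Proposition~2.4]{OP07}), I obtain $\al(\Linf\otb R)\emb_\cM\LinfN$. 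Adapting the corner argument of Lemma~\ref{lem:AuB} and using irreducibility to control relative commutants, I would upgrade this to a unitary conjugacy of $\al(\LinfN)$ into $\LinfN$ on a non-zero corner; composing with the corresponding inner perturbation, $\al$ may be assumed to carry a corner of $\LinfN$ into $\LinfN$ while preserving the central $\Linf$. Theorem~\ref{thm:Tak-Desint} then desintegrates $\al$ over $\Linf$ into stable isomorphisms $\al_x\colon N\rarrow N$ along a base transformation $\Phi$ of $X$, and since $\al$ preserves $\LinfN$ and permutes the crossed-product structure, $\Phi$ preserves the orbit equivalence relation $\cR(\Gam\act X)$. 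On a positive-measure set this yields $\be(A^{\mathbf s}(x))\sim_{sa}\be(A^{\mathbf t}(\Phi(x)))$.

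Finally I would translate this back and run a measure-theoretic argument parallel to part~(2) of Theorem~\ref{thm:nonstrongE0}. The second property of $\be$ converts $\sim_{sa}$ of slices into $A^{\mathbf s}(x)\,E_0\,A^{\mathbf t}(\Phi(x))$ on a positive-measure set; because both $A^{\mathbf s}$ and $A^{\mathbf t}\circ\Phi$ are $E_0$-homomorphisms (the latter using that $\Phi$ maps orbits to orbits), the set on which they agree modulo $E_0$ is $\Gam$-invariant, so by ergodicity it holds almost everywhere. Written through the factor map this says precisely that $\mu\big(A^{\mathbf s}_n\,\Del\,\Phi^{-1}(A^{\mathbf t}_n)\big)\rarrow 0$, and since $\Phi$ normalizes the full group the sequence $\Phi^{-1}(A^{\mathbf t}_n)$ is again almost invariant for $\Gam\act X$. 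For $\mathbf s,\mathbf t$ with $\abs{s_n-t_n}\not\rarrow 0,1$ — which every comeager set contains — the plan is to show $\limsup_n\mu\big(A^{\mathbf s}_n\,\Del\,\Phi^{-1}(A^{\mathbf t}_n)\big)>0$ for every such $\Phi$, so that by reverse Fatou a positive-measure set of points satisfies $A^{\mathbf s}(x)\,\cancel{E}_0\,A^{\mathbf t}(\Phi(x))$ and hence $f(\mathbf s)\not\sim_a f(\mathbf t)$; Proposition~\ref{prop:crit} then finishes the proof, and Theorem~\ref{thm:nonclassaut} follows by specialising to the factor $N$ of \cite{SV11}.

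I expect the main obstacle to be two-fold and to lie entirely in the last two steps. Operator-algebraically, the delicate point is the upgrade from the partial intertwining $\al(\Linf\otb R)\emb_\cM\LinfN$ to genuine control of $\al$ on $\LinfN$, where the irreducibility and regularity of $R$ must be used to rule out the degeneration onto a proper corner that has no analogue in the unitary-conjugacy case. Measure-theoretically, the difficulty is that the base transformation $\Phi$ is an essentially arbitrary self-orbit-equivalence rather than a full-group element, so the coordinatewise independence exploited in Theorem~\ref{thm:nonstrongE0} is lost; the crux will be to leverage the almost invariance of $\Phi^{-1}(A^{\mathbf t}_n)$ together with the product structure of the target $\oplus_\N\Z$-action to recover a uniform lower bound on $\limsup_n\mu\big(A^{\mathbf s}_n\,\Del\,\Phi^{-1}(A^{\mathbf t}_n)\big)$.
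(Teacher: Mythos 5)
Your operator-algebraic step is essentially the paper's: the corner control you sketch is exactly what the paper proves in Lemmas~\ref{lem:LinfembalLinf} and \ref{lem:q} (via $\Linf\otb R$, the $\cC_{rss}$ dichotomy, \cite[Lemma~3.5]{Va07} and \cite[Lemma~2.4(3)]{DHI16}), followed by disintegration through Theorem~\ref{thm:Tak-Desint}. But the output of that step is only a \emph{nonsingular} Borel isomorphism $\Phi:Y_2\rarrow Y_1$ between positive measure subsets of $X$, together with fiberwise stable conjugacies; your further claims --- that $\Phi$ preserves $\cR(\Gam\act X)$, that ergodicity upgrades the positive-measure agreement to almost everywhere, and that $\Phi^{-1}(A^{\bf t}_n)$ is almost invariant --- are not consequences of it. The decisive problem, however, is that the separation you defer to the end, namely $\limsup_n\mu\big(A^{\bf s}_n\Del\Phi^{-1}(A^{\bf t}_n)\big)>0$ whenever $\abs{s_n-t_n}\not\rarrow 0,1$, is actually \emph{false} for the single-index construction, even for $\Phi$ commuting with the action. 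Concretely: take $\Gam=\bF_\infty\in\cC_{rss}$, let $\pi:\Gam\rarrow\oplus_\N\Z$ be the abelianization, let $\Gam\act Z$ be a Bernoulli action, and let $X=Z\times\prod_\N S^1$ carry the diagonal action $g\cdot(z,w)=(gz,\pi(g)w)$, with $\theta$ the second projection; this action is free, ergodic, not strongly ergodic, and $\theta$ is a factor map onto the $\oplus_\N\Z$-rotation action. For any ${\bf s},{\bf t}\in(0,1)^\N$ the coordinatewise rotation $\rho$ of $\prod_\N S^1$ by the angles $2\pi(s_n-t_n)$ satisfies $\rho(B^{\bf t}_n)=B^{\bf s}_n$ for \emph{every} $n$ and commutes with all rotations, so $\Phi:=\id_Z\times\rho$ commutes with the $\Gam$-action and satisfies $\Phi(A^{\bf t}_n)=A^{\bf s}_n$ exactly. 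The induced automorphism $\al_\Phi\ot\id_N\in\Aut(\cM)$ then carries $f({\bf t})$ onto $f({\bf s})$, since its slice at $x$ is $\be\big(A^{\bf t}(\Phi^{-1}x)\big)=\be\big(A^{\bf s}(x)\big)$. Thus in this admissible example \emph{all} of your Cartan subalgebras $f({\bf t})$ lie in a single $\sim_a$-class and condition~(2) of Proposition~\ref{prop:crit} fails completely; the approach is not merely incomplete but unsound.

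The idea missing from your proposal is precisely the paper's device for neutralizing such symmetries: it doubles the parameter space, defining $f$ on $(0,1)^\N\times(0,1)^\N$ with the almost invariant sequence interleaved so that terms $2n-1$ and $2n$ both read the $n$-th coordinate of $\prod_\N S^1$, with parameters $s_n$ and $t_n$ respectively. A transformation like $\rho$ can translate both halves simultaneously but cannot alter the \emph{internal} discrepancy between them. Accordingly, if $f({\bf s},{\bf t})\sim_a f({\bf v},{\bf w})$, Step~1 together with the $\sim_{sa}$ hypothesis on $\be$ yields only the identity $A_n\cap Y_1=\Phi(B_n\cap Y_2)$ for all large $n$, with $\Phi$ merely nonsingular; comparing consecutive terms, $\abs{s_n-t_n}\rarrow 0$ forces $\mu(A_{2n-1}\Del A_{2n})\rarrow 0$, hence $\mu\big(\Phi(B_{2n-1}\cap Y_2)\Del\Phi(B_{2n}\cap Y_2)\big)\rarrow 0$, which is incompatible with $\abs{v_n-w_n}\rarrow\tfrac{1}{2}$ on the positive measure set $Y_2$. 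No dynamical compatibility of $\Phi$ is needed anywhere, which is exactly why the weak output of the structural step suffices for the paper's argument but not for yours.
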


For the proof of Theorem~\ref{thm:nonclassaut2} we start off with the following lemma.

\begin{lemma}\label{lem:LinfembalLinf}
	In the setting of Theorem~\ref{thm:nonclassaut2}, suppose $A$ and $B$ are Cartan subalgebras of $\cM$ contained in $\LinfN$ such that $\al(A)=B$ for some $\al\in\Aut(\cM)$. Then $\Linf\emb_{\al(\LinfN)}^s\al(\Linf)$.
\end{lemma}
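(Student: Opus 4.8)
The plan is to read the statement as a rigidity property for the centre of $\LinfN$, and to extract the required intertwining from the relatively strongly solid dichotomy applied to the amenable subfactor $R$.

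First I would record the basic inclusions. Since $A$ and $B$ are Cartan subalgebras of $\cM$ contained in $\LinfN$, they are maximal abelian in $\LinfN$ by Lemma~\ref{lem:maxab}, and hence both contain the centre $\cZ(\LinfN)=\Linf$. Applying $\al$ to $\Linf\sub A$ and using $\al(A)=B$ gives $\al(\Linf)\sub B$, while also $\Linf\sub B$; since $B=\al(A)\sub\al(\LinfN)$, both $\Linf$ and $\al(\Linf)$ sit inside $\al(\LinfN)$, with $\al(\Linf)=\al(\cZ(\LinfN))=\cZ(\al(\LinfN))$. Thus the statement to prove is exactly that $\Linf$ intertwines strongly into the centre of $\al(\LinfN)$, the ambient algebra being $\al(\LinfN)$ itself.

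Next I would bring in the subfactor. Put $T:=\Linf\otb R$. Using that $R\sub N$ is irreducible ($R'\cap N=\C 1$), regular ($\cN_N(R)''=N$), and that $\Gam\act X$ is free, a direct computation gives $T'\cap\cM=\Linf$ and $\cN_\cM(T)''=\cM$, while $T$ is amenable (a tensor product of amenable algebras). I then apply the dichotomy of Definition~\ref{def:rss} to the amenable subalgebra $\al^{-1}(T)$ of $\cM=\LinfN\cross\Gam$: its normalizer generates $\al^{-1}(\cN_\cM(T)'')=\cM$, which is not amenable relative to $\LinfN$ because $\Gam$ is non-amenable (as in Lemma~\ref{lem:AuB}, via \cite[Proposition~2.4]{OP07}). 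Hence the first alternative must hold, $\al^{-1}(T)\emb_\cM\LinfN$, and applying the automorphism $\al$ gives $T=\Linf\otb R\emb_\cM\al(\LinfN)$. Since $T$ is regular in $\cM$, a standard spreading/maximality argument then upgrades this to the strong embedding $T\emb_\cM^s\al(\LinfN)$.

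The remaining, and main, difficulty is to descend this to the centres and to relocate the intertwining inside $\al(\LinfN)$. Here I would extract Popa intertwining data $\psi,v$ from Theorem~\ref{thm:Popa} for $T\emb_\cM\al(\LinfN)$ and track the centre $\cZ(T)=T'\cap\cM=\Linf$: the relation $\psi(x)v=vx$ forces $v$ to carry the relative commutant $T'\cap\cM=\Linf$ into the relative commutant of $\psi(T)$ in $\cM$, which by irreducibility of $R$ in $N$ lands in $\cZ(\al(\LinfN))=\al(\Linf)$; this produces an intertwining of $\Linf$ into $\al(\Linf)$ at the level of $\cM$. To make this take place inside $\al(\LinfN)$ rather than merely inside $\cM$, and to obtain the strong form, I would exploit that $\al(\LinfN)$ is regular in $\cM$, so that $\cM=\al(\LinfN)\cross\Gam$ is once more a crossed product of the same type with $\Gam\in\cC_{rss}$; combining this symmetric decomposition with the strong embedding obtained above, and with \cite[Theorem~A.1]{Po01}, should then yield $\Linf\emb_{\al(\LinfN)}^s\al(\Linf)$. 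I expect the technical heart of the proof to be precisely this final step — promoting the $\cM$-level intertwining of the centres to a \emph{strong} intertwining carried out \emph{inside} $\al(\LinfN)$ — for which the regularity of both $T$ and $\al(\LinfN)$ in $\cM$ is the essential tool.
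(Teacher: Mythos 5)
Your opening (the inclusions $\Linf\sub B\sub\al(\LinfN)$, $\al(\Linf)=\cZ(\al(\LinfN))$) and your use of the $\cC_{rss}$ dichotomy for the amenable regular subalgebra $\Linf\otb R$, ruling out relative amenability via non-amenability of $\Gam$, match the paper. The genuine gap is in the step where you descend to the centres, and it stems from the orientation of your intertwining. From $\al^{-1}(T)\emb_\cM\LinfN$ you pass to $T:=\Linf\otb R\emb_\cM\al(\LinfN)$, and then claim that the Popa data $\psi(x)v=vx$ forces $v$ to carry $T'\cap\cM=\Linf$ into the relative commutant of $\psi(T)$, ``which by irreducibility of $R$ in $N$ lands in $\al(\Linf)$.'' The first half is correct: $vyv^*$ commutes with $\psi(p_0Tp_0)$ for every $y\in T'\cap\cM$. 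The second half is unjustified: irreducibility of $R$ in $N$ controls $T'\cap\cM$, not the relative commutant of the image of $T$ under an arbitrary embedding $\psi$ into a corner of $\al(\LinfN)$. Indeed $\psi(\Linf p_0)$ itself lies in $\psi(p_0Tp_0)'\cap q_0\cM q_0$, so asserting that this relative commutant sits inside $\al(\Linf)$ already presupposes exactly what is to be proved (that $\psi$ carries $\Linf$ into $\al(\Linf)$); nothing rules out that $\psi(p_0Tp_0)$ is a small amenable subalgebra of $q_0\al(\LinfN)q_0$ with a large relative commutant in $q_0\cM q_0$.

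The correct relative-commutant principle --- this is \cite[Lemma~3.5]{Va07}, which the paper invokes --- runs in the opposite direction: if the source embeds into the target, then the \emph{target's} relative commutant embeds into the \emph{source's} relative commutant. Applied to your intertwining $T\emb_\cM\al(\LinfN)$ it yields $\al(\Linf)\emb_\cM\Linf$, which is the mirror image of the desired conclusion (it is the lemma for $\al^{-1}$ after relabelling, but your writeup neither notices this nor repairs the broken intermediate step). The paper gets the orientation right by applying the dichotomy to $\al(\Linf\otb R)\sub(\LinfN)\cross\Gam$, obtaining $\al(\Linf\otb R)\emb_\cM\LinfN$; then the target's relative commutant is $\Linf$ and the source's is $\al(\Linf)$, so Vaes' lemma gives precisely $\Linf\emb_\cM\al(\Linf)$. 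This is upgraded to $\Linf\emb_\cM^s\al(\Linf)$ by \cite[Lemma~2.4(3)]{DHI16}, using $\cN_\cM(\Linf)''=\cM$ and factoriality of $\cM$, and finally localized inside $\al(\LinfN)$ not via \cite[Theorem~A.1]{Po01} (which concerns unitary conjugacy of Cartan subalgebras and plays no role here) but by a direct computation: assuming $\Linf\not\emb_{\al(\LinfN)}^s\al(\Linf)$ produces unitaries $u_n\in\cU(\Linf p)$ with $\norm{E_{\al(\Linf)}(x^*u_ny)}_2\rarrow 0$ for all $x,y\in p\al(\LinfN)$, and a Fourier expansion of $\cM$ over $\al(\LinfN)$, together with the fact that the $\al(u_g)$ normalize $\al(\Linf)$, extends this decay to all $x,y\in p\cM$, contradicting the strong embedding in $\cM$. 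You correctly identify this localization as the technical heart, but your sketch of it is too vague to assess, and in any case the argument cannot be completed until the centre-descent step is fixed along the lines above.
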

\begin{proof}
	 Firstly note that the conclusion of the lemma is actually possible as $\Linf\sub\al(\LinfN)$. Indeed $\Linf\sub B$ since $B$ is a Cartan subalgebra of $\LinfN$ whose center is $\Linf$, and $B=\al(A)\sub\al(\LinfN)$. Now consider 
	\[
	\al(\Linf\otb R) \sub (\LinfN)\cross\Gam.
	\]
	Since the left hand side is amenable, applying the dichotomy for $\Gam\in\cC_{rss}$ we get that either $\al(\Linf\otb R) \emb_\cM \LinfN$ or $\cN_\cM(\al(\Linf\otb R))''$ is amenable relative to $\LinfN$. As this normalizer equals $\cM$ by assumption and $\Gam$ is not amenable, the latter is not possible (see \cite[Proposition~2.4]{OP07}). So we get that
	\[
	\al(\Linf\otb R) \emb_\cM \LinfN.
	\]
	Since $R'\cap N=\C 1$, taking relative commutants and using \cite[Lemma~3.5]{Va07} this implies that
	\[
	\Linf \emb_\cM \al(\Linf).
	\]
	Since $\cN_\cM(\Linf)''=\cM$ and $\cM$ is a factor, \cite[Lemma~2.4(3)]{DHI16} implies that we actually have
	\begin{equation}\label{eq:strongemb0}
	\Linf\emb_\cM^s\al(\Linf).
	\end{equation}
	We will upgrade this to an embedding inside $\al(\LinfN)$, i.e.
	\begin{equation}\label{eq:strongemb}
	\Linf\emb_{\al(\LinfN)}^s\al(\Linf),
	\end{equation}
	which would finish the proof of the lemma. Assume \eqref{eq:strongemb} does not hold. Then there exists a projection $p\in\Linf'\cap\al(\LinfN)$ such that $\Linf p \not\emb_{\al(\LinfN)} \al(\Linf)$. By Theorem~\ref{thm:Popa} we can then find $u_n\in\cU(\Linf p)$ such that
	\[
	\norm{E_{\al(\Linf)}(x^*u_ny)}_2\rarrow 0
	\]
	for all $x,y\in p\al(\LinfN)$. We will show that in this case the same holds for all $x,y\in p\cM$, contradicting \eqref{eq:strongemb0}. By density, we can assume that $x^*=\al(au_g)p$ and $y=p\al(bu_h)$ where $a,b\in\LinfN$ and $g,h\in\Gam$. Since $\al(\Linf)\sub \al(\LinfN)$ we have $E_{\al(\Linf)}=E_{\al(\Linf)}\circ E_{\al(\LinfN)}$. Using the fact that $\cU(\al(N))$ normalizes $\al(\Linf)$ we then get
	\begin{align*}
	\norm{E_{\al(\Linf)}(\al(au_g)u_n\al(bu_h)}_2 &=
	\norm{\al(u_g)E_{\al(\Linf)}(\al(u_g^*au_g)u_n\al(bu_{hg}))\al(u_g^*)}_2\\
	&= \norm{E_{\al(\Linf)}(E_{\al(\LinfN)}(\al(u_g^*au_g)u_n\al(b)\al(u_{hg})))}_2\\
	&= \norm{E_{\al(\Linf)}(\al(u_g^*au_g)u_n\al(b)\al(u_{hg})\del_{hg,e})}_2.
	\end{align*}
	For the last equality we used the fact that $\al(u_g^*au_g)u_n\al(b)\in\al(\LinfN)$ to take it out of $E_{\al(\LinfN)}$ together which the fact that the $u_g$'s for $g\neq e$ are orthogonal to $\LinfN$. Now the last line converges to $0$ by the discussion above, finishing the proof of the lemma.
\end{proof}

\begin{lemma}\label{lem:q}
	In the setting of Lemma~\ref{lem:LinfembalLinf}, there exists a nonzero projection $q\in B$ such that $\al(\Linf)q=\Linf q$.
\end{lemma}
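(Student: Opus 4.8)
The plan is to feed the strong embedding from Lemma~\ref{lem:LinfembalLinf} and its symmetric counterpart for $\al^{-1}$ through the direct integral decompositions over the relevant centers, convert each into a fiberwise \emph{atomicity} statement, and then read off $q$ from a measurable selection argument. Throughout, note that $\al(\Linf)=\cZ(\al(\LinfN))$ (since $\al$ is an isomorphism and $\Linf=\cZ(\LinfN)$), and that both $\Linf$ and $\al(\Linf)$ sit inside the abelian algebra $B$: indeed $\Linf\sub B$ because $B\super\cZ(\LinfN)$, and $\al(\Linf)\sub\al(A)=B$ because the Cartan subalgebra $A\sub\LinfN$ contains $\cZ(\LinfN)=\Linf$. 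I would first record a second, symmetric intertwining. Applying Lemma~\ref{lem:LinfembalLinf} to the automorphism $\al^{-1}$ and the pair $(B,A)$ (note $\al^{-1}(B)=A$) gives $\Linf\emb^s_{\al^{-1}(\LinfN)}\al^{-1}(\Linf)$, and pushing this forward by $\al$ yields
\[
\al(\Linf)\emb^s_{\LinfN}\Linf ,
\]
where now $\Linf=\cZ(\LinfN)$. Thus each of $\Linf$ and $\al(\Linf)$ strongly embeds into the other, and in both cases the target is precisely the center of the ambient algebra.

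Next I would convert ``strong embedding into the center'' into pure atomicity of the fibers. Decomposing $\al(\LinfN)=\dint_Y N\,dy$ over its center $\al(\Linf)\cong L^\infty(Y)$, write $\Linf=\dint_Y\Linf_y\,dy$ and $B=\dint_Y B_y\,dy$ (with $B_y$ Cartan in $N$ by the argument of Lemma~\ref{lem:Ax}). I claim $\Linf_y$ is purely atomic for almost every $y$. If not, the support projection of the diffuse part of $\Linf_y$ over a positive measure set of $y$ assembles to a nonzero projection $p\in\Linf$, and strong embedding gives $\Linf p\emb_{\al(\LinfN)}\al(\Linf)$; running Popa's criterion (Theorem~\ref{thm:Popa}) through the direct integral exactly as in the claim inside the proof of Lemma~\ref{lem:meas}, the centrality of the intertwiner forces the diffuse abelian algebra $\Linf_y p_y$ to admit a nonzero corner on which it acts by scalars, which is absurd (cutting a spectral set in half repeatedly). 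Symmetrically, decomposing $\LinfN=\dint_X N\,dx$ over $\Linf$, the second intertwining shows that $\al(\Linf)_x$ is purely atomic for almost every $x$.

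Finally I would extract $q$. Identifying the abelian algebra $\Linf\vee\al(\Linf)\sub B$ with $L^\infty(X\times Y,\eta)$, so that $\Linf$ and $\al(\Linf)$ become the two coordinate subalgebras and $\eta$ the joint distribution, pure atomicity of $\Linf_y$ says exactly that the conditional measures of $\eta$ over $Y$ are purely atomic; hence $\eta$ is concentrated on countably many graphs $\{x=\sigma_n(y)\}$ of Borel maps $\sigma_n\colon Y\rarrow X$. The symmetric statement concentrates $\eta$ on countably many graphs $\{y=\rho_m(x)\}$. Being a probability measure concentrated on both countable unions, $\eta$ is concentrated on $\bigcup_{n,m}\bigl(\{x=\sigma_n(y)\}\cap\{y=\rho_m(x)\}\bigr)$, so some piece $\{x=\sigma_n(y)\}\cap\{y=\rho_m(x)\}$ carries positive measure. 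This piece is the graph of a partial Borel bijection between $X$ and $Y$, and on such a graph the two coordinate $\sigma$-algebras coincide. The corresponding projection $q\in\Linf\vee\al(\Linf)\sub B$ then satisfies $\al(\Linf)q=\Linf q$, as desired.

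The hard part is the passage in the last two paragraphs. A single embedding yields only the inclusion $\Linf q\sub\al(\Linf)q$ (equivalently, a graph over $Y$ on which $\Linf$ is merely \emph{coarser} than $\al(\Linf)$), and the \emph{equality} in the statement genuinely requires the second, symmetric embedding: one can write down sub-$\sigma$-algebras $\mathcal F_1\subsetneq\mathcal F_2$ for which the atomicity holds in one direction but no positive measure corner of equality exists. Making the ``purely atomic conditionals in both directions $\Rightarrow$ concentrated on graphs of partial bijections'' step precise, and carrying out the accompanying measurable selections, is where the real work lies.
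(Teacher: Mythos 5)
Your proof is correct, but it follows a genuinely different route from the paper's. The paper never disintegrates: starting from $\Linf\emb_{\al(\LinfN)}\al(\Linf)$ it takes the intertwining partial isometry $v$ with $\psi(x)v=vx$, observes that $v$ commutes with the central target $\al(\Linf)$, and multiplies by $v^*$ to get the corner inclusion $\Linf q'\sub\al(\Linf)q'$ with $q'=v^*v\in(\LinfN)\cap\al(\LinfN)$; it then runs the same trick for the symmetric embedding $\al(\Linf)\emb^s_{\LinfN}\Linf$ (the one you also record) below $q'$, producing $q\leq q'$ with the reverse inclusion, hence equality; finally, since this $q$ need not lie in $B$, it applies $E_{\Linf\vee\al(\Linf)}$ together with the functions $f_n(t)=t\inv\een_{[1/n,\infty)}(t)$ to replace $q$ by the support $s(E(q))\in\Linf\vee\al(\Linf)\sub B$. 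You instead convert both strong embeddings into pure atomicity of the conditional measures of the coupling $\eta$ realizing $\Linf\vee\al(\Linf)\cong L^\infty(X\times Y,\eta)$, and extract $q$ from a positive-measure partial-bijection graph. What the paper's corner trick buys is brevity: no disintegration, no measurable selection of atoms, no Lusin--Souslin argument (exactly the work you flag as hard). What yours buys is that $q$ lands in $\Linf\vee\al(\Linf)\sub B$ automatically, dispensing with the paper's final expectation-and-support step, and a sharper structural picture of how the two copies of $L^\infty$ sit inside $B$. Two harmless slips: the assembled diffuse-part support lies in $\Linf\vee\al(\Linf)$ rather than in $\Linf$ as you wrote (which suffices, since strong embedding is tested against projections in $\Linf'\cap\al(\LinfN)$); and the claim inside the proof of Lemma~\ref{lem:meas} goes in the opposite direction to the one you cite it for (it turns global non-intertwining into fiberwise non-intertwining), so the fiberwise scalar corner should instead be obtained from the centrality trick above run fiberwise, or by exhibiting measurable fields of Haar unitaries in the diffuse part to contradict Theorem~\ref{thm:Popa}.
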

\begin{proof}
	Let $p\in\LinfN\cap\al(\LinfN)$ be any nonzero projection. From Lemma~\ref{lem:LinfembalLinf} it follows that there exist $p_0\in\Linf p$, $p_1\in\al(\Linf)$, a $^*$-homomorphism $\psi: \Linf p_0\rarrow\al(\Linf)p_1$ and a nonzero partial isometry $v\in p_1\al(\LinfN)p_0$ such that $\psi(x)v=vx$ for all $x\in \Linf p_0$. Note however that $v$ in this case commutes with $\al(\Linf)$, giving $v\psi(x)=vx$. Multiplying on the left by $v^*$ and writing $q':=v^*v\in (\Linf p_0)'\cap p_0\al(\LinfN)p_0 = p_0[(\LinfN)\cap \al(\LinfN)]p_0$, we get 
	\begin{equation}\label{eq:ineq1}
	\Linf q'\sub\al(\Linf) q'.
	\end{equation}
	Repeating the same arguments for $\al^{-1}$, Lemma~\ref{lem:LinfembalLinf} tells us that also $\al(\Linf)\emb^s_{\LinfN}\Linf$. In particular $\al(\Linf) q'\emb_{\LinfN}\Linf$ and with the same reasoning as above in the proof of this lemma, we get $0\neq q\in (\LinfN)\cap \al(\LinfN)$ such that $q\leq q'$ and $\al(\Linf)q\sub\Linf q$. Together with \eqref{eq:ineq1} this means
	\begin{equation}\label{eq:equalq}
	\al(\Linf)q=\Linf q.
	\end{equation}
	Applying $E:=E_{\Linf\vee\al(\Linf)}$ we get the same equality with $E(q)$ instead of $q$. Multiplying with $f_n(E(q))$ where $f_n(t)=t\inv\een_{[1/n,\infty)}(t)$ and taking the limit as $n\rarrow\infty$, we also get the same for the support $s(E(q))$ of $E(q)$. Since $s(E(q))\in \Linf\vee\al(\Linf)\sub B$ this means we can assume that $q$ in equation \eqref{eq:equalq} belongs to $B$, which finishes the proof of the lemma.
\end{proof}

\begin{proof}[Proof of Theorem~\ref{thm:nonclassaut2}]
	We proceed in two steps. In the first step we show that, given two Cartan subalgebras of $\cM$ contained in $\LinfN$ that are conjugate by an automorphism of $\cM$, there are positive measure subsets of $X$ on which the Cartan subalgebras appearing in the respective integral decompositions are conjugate by a stable automorphism of $N$. In the second step we construct a map $f:(0,1)^\N\times (0,1)^\N\rarrow \Cartan(\cM)$ similar to the one in the proof of Theorem~\ref{thm:nonstrongE0}. We then use the proof of Theorem~\ref{thm:nonstrongE0} together with step 1 to verify that $f$ satisfies the conditions of Proposition~\ref{prop:crit}.
	
	\vspace{5pt}
	
	\textit{Step 1.} Suppose that $A=\dint_X A_x\,dx$ and $B=\dint_X B_x\,dx$ are Cartan subalgebras of $\cM$ contained in $\LinfN$ (cf. Theorem~\ref{thm:structure}) that are conjugate by an automorphism of $\cM$. We will show that there exists a nonsingular Borel isomorphism $\Phi:Y_2\rarrow Y_1$ between positive measure subsets of $X$ such that $A_x$ and $B_{\Phi^{-1}x}$ are conjugate by a stable automorphism of $N$ for all $x\in Y_1$. Take $\al\in\Aut(\cM)$ such that $\al(A)=B$. Using Lemma~\ref{lem:q}, we can take $q\in B$ such that $\al(\Linf)q=\Linf q$. Taking relative commutants we get
	\[
	q\al(\LinfN)q=q(\LinfN) q.
	\]
	Writing $\tilde{q}=\al^{-1}(q)\in A$, $Y_1:=\{x\in X\mid \tilde{q}_x\neq 0\}$, and $Y_2:=\{x\in X\mid q_x\neq 0\}$ this gives the following diagram, where applying $\al$ gives an isomorphism from the first row to the second at every level.
	
	\vspace{5pt}
	
	\centerline{\xymatrix{L^\infty(Y_1)\tilde{q} = \Linf \tilde{q} &\sub & A\tilde{q} &\sub & \tilde{q}(\LinfN)\tilde{q} = \dint_{Y_1} \tilde{q}_xN\tilde{q}_x\,dx & \ar@/^1pc/[d]^\al_\cong\\
			L^\infty(Y_2)q=\Linf q &\sub & Bq &\sub & q(\LinfN)q = \dint_{Y_2} q_xNq_x\,dx &}}
	
	\vspace{5pt}
	
	Possibly ignoring a zero measure subset, it then follows from Theorem~\ref{thm:Tak-Desint} that we have a nonsingular Borel isomorphism $\Phi:Y_2\rarrow Y_1$ and isomorphisms
	\[
	\al_x: \tilde{q}_xN\tilde{q}_x\xrightarrow{\sim} q_{\Phi^{-1}x}Nq_{\Phi^{-1}x}
	\]
	for every $x\in Y_1$. In particular
	\[
	\al_x(A_x\tilde{q}_x)=B_{\Phi^{-1}x}q_{\Phi^{-1}x},
	\]
	i.e. $A_x$ is conjugate to $B_{\Phi^{-1}x}$ by a stable automorphism of $N$ for every $x\in Y_1$.
	
	\vspace{5pt}
	
	\textit{Step 2.} Recall from the proof of Theorem~\ref{thm:nonstrongE0} the sets $C_r:=\left\{ e^{2\pi i (r+t)}\mid t\in [0,1/2]\right\}\sub S^1$ for $r\in (0,1)$. Consider the map (cf. $\varphi$ in \eqref{eq:vphi})
	\[
	\phi:(0,1)^\N\times (0,1)^\N\rarrow \prod_\N \cB(\prod_\N S^1):(s_n,t_n)_n\mapsto (D^{\bf s,t}_n)_n
	\]
	where
	\begin{align*}
	D^{\bf s,t}_{2n-1} &:= (S^1)^{n-1} \times C_{s_n} \times S^1 \times S^1\times \dots,\\
	D^{\bf s,t}_{2n} &:= (S^1)^{n-1} \times C_{t_n} \times S^1 \times S^1\times \dots.
	\end{align*}
	Using the given Borel map $\be:\{0,1\}^\N\rarrow\Cartan(N)$, we can associate a Cartan subalgebra of $\cM$ to every such sequence as follows (cf. Theorem~\ref{thm:nonstrongE0}). Using the non-strong ergodicity of $\Gam\act X$ we again get a factor map $\theta:X\rarrow \Pi_\N S^1$ for the actions of $\Gam$ and $\oplus_\N \Z$. Let $A^{\bf s,t}_n:=\theta\inv(D^{\bf s,t}_n)$. Then we can consider the map
	\[
	A^{\bf s,t}: X\rarrow \{0,1\}^\N:x\mapsto (\een_{A^{\bf s,t}_n}(x))_n.
	\]
	Arguing as in the proof of Theorem~\ref{thm:nonstrongE0} we get in this way a map
	\[
	(0,1)^\N\times (0,1)^\N\rarrow \Hom(\cR(\Gam\act X),\cR(\cU(N)\act\Cartan(N))):({\bf s,t})\mapsto \beta\circ A^{\bf s,t}.
	\]
	As in the proof of Step 1 of Lemma~\ref{lem:eqrel} we can associate a Cartan subalgebra of $\cM$ to every homomorphism $\psi$ from $\cR(\Gam\act X)$ to $\cR(\cU(N)\act\Cartan(N))$ via $A:=\dint_X \psi(x)\,dx$. Altogether this gives us a measurable map
	\[
	f:(0,1)^\N\times (0,1)^\N\rarrow \Cartan(\cM):({\bf s,t})\mapsto \dint_X\beta\circ A^{\bf s,t}(x)\,dx.
	\]
	We claim that $f$ satisfies the conditions in Proposition~\ref{prop:crit} for the relation of conjugacy by automorphisms on $\Cartan(\cM)$. The same proof as in Theorem~\ref{thm:nonstrongE0} goes through to show that $f({\bf s,t})$ and $f({\bf v,w})$ are unitarily conjugate (and so certainly conjugate by an automorphism) whenever ${\bf s}-{\bf v}\in \ell^1$ and ${\bf t}-{\bf w}\in \ell^1$. Hence the first condition follows immediately.
	
	For the second condition, take any comeager set $C\sub (0,1)^\N\times (0,1)^\N$. Then we can find sequences $(s_n)_n$, $(t_n)_n$, $(v_n)_n$ and $(w_n)_n$ in $(0,1)^\N$ such that $(s_n,t_n)_n\in C$, $(v_n,w_n)_n\in C$, $s_n,t_n,v_n\rarrow 0$ and $w_n\rarrow \frac{1}{2}$. Indeed, being comeager, $C$ is dense and so in particular intersects arbitrary neighbourhoods of $(0,0)$ and $(0,\frac{1}{2})$. We claim that in this case $A:=f({\bf s,t})$ and $B:=f({\bf v,w})$ are not conjugate by an automorphism of $\cM$, which would imply the second condition of Proposition~\ref{prop:crit}. So assume $A\sim_a B$. Then from step 1 we get positive measure subsets $Y_{1,2}\sub X$ and a partial automorphism $\Phi:Y_2\rarrow Y_1$ such that
	\[
	A_x \sim_{sa} B_{\Phi\inv(x)}
	\]
	for all $x\in Y_1$. Looking at the construction of $f$ above this means that we have
	\[
	\een_{(A_n)_n}(x) E_0 \een_{(B_n)_n}(\Phi\inv(x))
	\]
	for $x\in Y_1$. Here $(A_n)_n$ and $(B_n)_n$ are the almost invariant sequences used to construct $A$ respectively $B$, i.e. $A_n=\theta\inv(D^{\bf s,t}_n)$ and $B_n=\theta\inv(D^{\bf v,w}_n)$ where $\theta:X\rarrow \Pi_\N S^1$ is the factor map as before. Rephrasing the above, we have that for $x\in Y_1$,
	\[
	x\in A_n\cap Y_1 \quad \iff \quad \Phi\inv(x)\in B_n\cap Y_2 \quad \iff \quad x\in\Phi(B_n\cap Y_2)
	\]
	for $n$ large enough. Hence by shrinking $Y_1$ to a positive measure subset if needed, we can assume that there exists $n_0\in\N$ such that for all $n\geq n_0$
	\begin{equation}\label{eq:AB}
	A_n\cap Y_1 = \Phi(B_n\cap Y_2).
	\end{equation}
	Now recall that by construction we have
	\begin{align*}
	A_{2n-1} &:= \theta\inv((S^1)^{n-1} \times C_{s_n} \times S^1 \times \dots),\\
	A_{2n} &:= \theta\inv((S^1)^{n-1} \times C_{t_n} \times S^1 \times \dots),\\
	B_{2n-1} &:= \theta\inv((S^1)^{n-1} \times C_{v_n} \times S^1 \times \dots),\\
	B_{2n} &:= \theta\inv((S^1)^{n-1} \times C_{w_n} \times S^1 \times \dots).
	\end{align*}
	Together with \eqref{eq:AB} we get
	\[
	\{ a\in Y_1\mid \theta(a)_n\in C_{s_n}\} = A_{2n-1}\cap Y_1 = \Phi(B_{2n-1}\cap Y_2) = \Phi(\{ b\in Y_2\mid \theta(b)_n\in C_{v_n}\})
	\]
	and
	\[
	\{ a\in Y_1\mid \theta(a)_n\in C_{t_n}\} = A_{2n}\cap Y_1 = \Phi(B_{2n}\cap Y_2) = \Phi(\{ b\in Y_2\mid \theta(b)_n\in C_{w_n}\})
	\]
	for $n\geq n_0$. Since $\abs{s_n-t_n}\rarrow 0$, we have $\mu(A_{2n-1}\Del A_{2n})\rarrow 0$ (cf. \eqref{eq:measureBsBt}) and the above equalities then yield $\mu(\Phi(B_{2n-1}\cap Y_2)\Del\Phi(B_{2n}\cap Y_2))\rarrow 0$. Since $Y_2$ has positive measure, this contradicts the fact that $\abs{v_n-w_n}\rarrow \frac{1}{2}$, finishing the proof.
\end{proof}

Theorem~\ref{thm:nonclassaut2} allows us to get the first family of \II factors whose Cartan subalgebras up to conjugacy by an automorphism are not classifiable by countable structures. More specifically we can use the \II factors of Speelman and Vaes constructed in \cite{SV11} as follows. Let $G$ be a countable group, $K$ a compact abelian group and $\Lam<K$ a countable dense subgroup. Consider the \II factor
\begin{equation}\label{eq:N}
N:=L^\infty(K^G)\cross (G\times \Lam)
\end{equation}
where $G\act K^G$ is the Bernoulli action and $\Lam\act K^G$ acts via $\lam\cdot(k_g)_{g\in G}=(\lam k_g)_{g\in G}$. If $K_1<K$ is a closed subgroup such that $\Lam_1:=\Lam\cap K_1$ is dense in $K_1$, the subalgebra $\cC(K_1):= L^\infty(K^G/K_1)\cross \Lam_1$ is a Cartan subalgebra of $N$ (\cite[Lemma~6]{SV11}). When $G$ is a property (T) group such that $[G,G]=G$, \cite[Theorem~1]{SV11} tells us when two such Cartan subalgebras are unitarily conjugate or conjugate by a (stable) automorphism. This is then used to show in \cite[Theorem~2]{SV11} that for a specific choice of $K$ and $\Lam$ one gets a Borel reduction $\beta$ of $E_0$ into $\Cartan(N)$ for either the relation of being unitarily conjugate, conjugate by an automorphism or conjugate by a stable automorphism, i.e.
\begin{equation}\label{eq:SV11}
xE_0y \quad \iff \quad \beta(x)\sim_u\beta(y) \quad \iff \quad \beta(x)\sim_a\beta(y) \quad \iff \quad \beta(x)\sim_{sa}\beta(y).
\end{equation}
Moreover one can easily check that $(L^\infty(K^G)\cross\Lam)\sub N$ is an irreducible regular amenable subfactor. Hence the \II factor constructed in \cite[Theorem~2]{SV11} satisfies both conditions in Theorem~\ref{thm:nonclassaut2}. This now immediately implies Theorem~\ref{thm:nonclassaut}.

\begin{remark}
	Another approach would be to try to use the dichotomy for groups $\Gam\in\cC_{rss}$ as follows. Take an arbitrary \II factor $N$ and suppose we have $A,B\in\Cartan((\Linf\cross\Gam)\otb N)$ and $\al\in\Aut((\Linf\cross\Gam)\otb N)$ such that
	\[
	\al(A)=B.
	\]
	Writing $(L^\infty(X)\otb N)\cross\Gam = \al(L^\infty(X)\cross\Gam) \otb \al(N)$ and using the fact that $\Gam\in\cC_{rss}$, it follows from \cite[Lemma 5.2]{KV15} that either $\al(L^\infty(X)\cross\Gam)\emb_\cM L^\infty(X)\otb N$ or $\al(N)$ is amenable relative to $L^\infty(X)\otb N$. Using once more that $\Gam\in\cC_{rss}$, the latter implies that $\al(N)\emb_\cM L^\infty(X)\otb N$ (since $\cN_\cM(\al(N))''=\cM$ is not amenable relative to $L^\infty(X)\otb N$). Hence either
	\begin{equation}\label{eq:1}
	\al(L^\infty(X)\cross\Gam)\emb_\cM L^\infty(X)\otb N, 
	\end{equation}\label{eq:2}
	or
	\begin{equation}
	\al(N)\emb_\cM L^\infty(X)\otb N.
	\end{equation}
	Given the latter, it is not difficult to get to the same conclusion as in Lemma~\ref{lem:q}. So if we can in some way exclude the first possibility, this could give an alternative way to our conclusion, avoiding to use the specific requirements on $N$ in Theorem~\ref{thm:nonclassaut2}.
\end{remark}

\section{\texorpdfstring{The hyperfinite \II factor}{The hyperfinite II1 factor}}\label{sec:R}

In \cite{Pa85}, J. Packer explicitly constructs an uncountable family of Cartan subalgebras of the hyperfinite \II factor $R$ no two of which are unitarily conjugate. Combining the idea behind this construction with a turbulence result for cocycles from \cite{Kec10} we will directly show that the Cartan subalgebras of $R$ up to unitary conjugacy are in fact not classifiable by countable structures.

Recall that a pmp action $\Gam\act (X,\mu)$ is called \emph{compact} (or \textquotedblleft has pure point spectrum" in the formulation of \cite{Pa85}) if the image of $\Gam$ in $\Aut(X,\mu)$ is precompact in the weak topology (the smallest topology making the maps $T\mapsto \mu(T(A)\Del B)$ continuous for all measurable sets $A,B\sub X$).

We first restate two results from \cite{Pa85} used to construct a big family of Cartan subalgebras of $R$.

\begin{theorem}[{\cite[Corollary~2.6]{Pa85}}]\label{thm:LGCartan}
	Let $\Gam$ be a countable discrete abelian group, $(Y,\nu)$ a probability space and suppose $\Gam\curvearrowright Y$ is a free ergodic pmp action. Then $L\Gam$ is a Cartan subalgebra of $L^\infty(Y)\cross \Gam$ if and only if the action is compact.
\end{theorem}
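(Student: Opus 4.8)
The plan is to reduce the Cartan condition to regularity and then to identify the normalizer of $L\Gam$ with the \emph{eigenfunctions} of the action. First, since $\Gam$ is abelian, $L\Gam$ is abelian, and it is automatically maximal abelian: writing any $x\in M:=L^\infty(Y)\cross\Gam$ as $x=\sum_g a_g u_g$ with $a_g\in L^\infty(Y)$, the relation $u_hxu_h^*=x$ forces $\sigma_h(a_g)=a_g$ for all $g,h$ (using $hgh^{-1}=g$), whence $a_g\in\C$ by ergodicity and $x\in L\Gam$. Thus $L\Gam'\cap M=L\Gam$ regardless of compactness, and the entire content of the theorem is the regularity statement $\cN_M(L\Gam)''=M$.

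The engine is the following computation. Call $f\in L^\infty(Y)$ a \emph{unitary eigenfunction} if $|f|=1$ and $\sigma_g(f)=\overline{\chi_f(g)}\,f$ for some character $\chi_f\in\hat{\Gam}$. Then
\[
f u_g f^* = f\,\sigma_g(f^*)\,u_g = \chi_f(g)\,|f|^2\,u_g = \chi_f(g)\,u_g\in L\Gam,
\]
so every unitary eigenfunction normalizes $L\Gam$. Let $K\sub L^\infty(Y)$ denote the von Neumann algebra generated by all unitary eigenfunctions (the maximal compact, or Kronecker, factor); it is $\Gam$-invariant, and $K\vee L\Gam=K\cross\Gam$. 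For the direction ``compact $\Rightarrow$ Cartan'', recall that a compact (equivalently, discrete-spectrum) ergodic action has $L^2(Y)$ spanned by eigenfunctions, so $K=L^\infty(Y)$; hence $\cN_M(L\Gam)''\super L^\infty(Y)\vee L\Gam=M$, and combined with the maximal abelianness above this shows $L\Gam$ is Cartan.

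For the converse ``Cartan $\Rightarrow$ compact'', the crux is the reverse containment $\cN_M(L\Gam)''\sub K\cross\Gam$. To prove it, I would take a normalizing unitary $w=\sum_g w_g u_g$ and use that $wu_hw^*\in L\Gam$ for every $h$, i.e.\ that all $L^\infty(Y)$-valued Fourier coefficients of $wu_hw^*$ are scalar. Expanding $wu_hw^*$ and reading off the coefficient of each $u_m$ yields relations forcing each $w_g$ to be a scalar multiple of a unitary eigenfunction, so $w_g\in K$ and $w\in K\cross\Gam$. Granting this, regularity gives $K\cross\Gam=\cN_M(L\Gam)''=M$; taking the $u_e$-Fourier coefficient of an arbitrary $a\in L^\infty(Y)\sub M$ then shows $a\in K$, so $K=L^\infty(Y)$. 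Thus $L^2(Y)$ is spanned by eigenfunctions, the action has discrete spectrum, and is therefore compact.

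I expect the main obstacle to be precisely the containment $\cN_M(L\Gam)''\sub K\cross\Gam$: the Fourier-coefficient computation identifying normalizer coefficients with eigenfunctions is where the real work lies, and it must be handled carefully (for instance via the group homomorphism $h\mapsto wu_hw^*$ from $\Gam$ into $\cU(L\Gam)\cong\cU(L^\infty(\hat{\Gam}))$ and the induced measure-preserving transformation of $\hat{\Gam}$) rather than by the single-mode check $w=fu_{g_0}$ alone.
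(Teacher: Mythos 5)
You should first note that the paper never proves this statement: it is quoted directly from \cite[Corollary~2.6]{Pa85}, so your argument has to stand on its own merits. Its skeleton is sound and is in fact the standard one: $L\Gam$ is automatically maximal abelian by ergodicity and commutativity (your computation is correct), unitary eigenfunctions normalize $L\Gam$, compactness is equivalent to discrete spectrum so that the Kronecker subalgebra $K$ equals $L^\infty(Y)$ and regularity follows, and the converse reduces to the containment $\cN_M(L\Gam)''\sub K\cross\Gam$.

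The genuine gap is in the mechanism you propose for that containment. The intermediate claim you want to extract from the Fourier relations --- that every coefficient $w_g$ of a normalizing unitary $w$ is a scalar multiple of a \emph{single} unitary eigenfunction --- is false, and it fails exactly in the situation the converse must handle, namely when $\cN_M(L\Gam)$ is large. Concretely, let $\Gam=\Z$ act on $Y=S^1$ by an irrational rotation, so that $M=L^\infty(S^1)\cross\Z$ is generated by the coordinate eigenfunction $z$ and $u:=u_1$ with $uzu^*=e^{2\pi i\al}z$, and $L\Z$ is Cartan by your easy direction. Identifying $L\Z\cong L^\infty(\bT)$, conjugation by $z$ acts on $L\Z$ as an irrational rotation $R$ of the dual circle; take any element $T$ of the full group of $R$ with non-constant return cocycle, say $T=\id$ on a set $C$ with $0<\mu(C)<1$ and $T$ the first-return map of $R$ on $C^c$, with level sets $B_n=\{T=R^n\}$. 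The Feldman--Moore unitary $w=\sum_n \een_{B_n}z^n$ (with $\een_{B_n}\in L\Z$) lies in $\cN_M(L\Z)$, yet its $u_e$-Fourier coefficient is $E_{L^\infty(Y)}(w)=\sum_n \mu(B_n)\,z^n$, a combination of at least two distinct eigenfunctions; since ergodicity makes every eigenspace one-dimensional, this is not a scalar multiple of any eigenfunction. So no purely algebraic bookkeeping of the relations can yield your claim, and the parenthetical suggestion (the induced transformation of $\hat\Gam$) does not repair it, because the automorphism $\Ad(w)|_{L\Gam}$ need not be a translation of $\hat\Gam$. What is true, and what the actual proof establishes, is the weaker statement $w_g\in K$, and this requires an analytic compactness argument rather than coefficient identification: writing $\theta=\Ad(w)|_{L\Gam}$ and $v_h:=\theta^{-1}(u_h)u_h^*\in\cU(L\Gam)$, one has $u_hwu_h^*=wv_h$, hence $\sigma_h(w_k)=\sum_j \hat{v}_h(j)\,w_{kj^{-1}}$, so the positive trace-class operator $T_0=\sum_m\langle\,\cdot\,,w_m\rangle w_m$ on $L^2(Y)$ is invariant under the $\Gam$-representation; its nonzero spectral projections are then finite-dimensional invariant subspaces, which for abelian $\Gam$ are spanned by eigenfunctions, whence every $w_g$ lies in the closed span of eigenfunctions and therefore in $K$. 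With that replacement your outline does prove the theorem; as written, however, the crucial step aims at a false statement.
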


Given a free ergodic pmp action $\Gam\act Y$ as in the theorem and a Borel 1-cocycle $c:\Gam\times Y\rarrow S^1$ we can construct an automorphism $A_c$ of $L^\infty(Y)\cross \Gam$ given by
\[
A_c(\sum_g a_gu_g) = \sum_g a^{(c)}_g u_g,
\]
where $a^{(c)}_g(y)=c(g,y)a_g(y)$. If the action is also compact, we get in this way a family of Cartan subalgebras $(A_c(L\Gam))\sub L^\infty(Y)\cross \Gam$ where $c$ ranges over all 1-cocycles. The following result allows us to tell these Cartan subalgebras apart.

\begin{theorem}[{\cite[Theorem~3.8]{Pa85}}]\label{thm:AalconjLG}
	Let $\Gam$ and $Y$ be as above and suppose $\Gam\curvearrowright Y$ is a free ergodic compact pmp action. Let $c:\Gam\times Y\rarrow S^1$ be a Borel 1-cocycle. Then $A_c(L\Gam)$ is unitarily conjugate to $L\Gam$ inside $L^\infty(Y)\cross \Gam$ if and only if $c$ is cohomologous to a cocycle of the form $\gam(g,y)=\gam(g)$ for some $\gam\in\hat{\Gam}$.
\end{theorem}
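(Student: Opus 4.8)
The plan is to reformulate the statement cohomologically and then treat the two implications separately, the forward one being the substance. Throughout, write $M:=L^\infty(Y)\cross\Gam$, recall that $A_c$ fixes $L^\infty(Y)$ pointwise and sends $u_g\mapsto v_g:=c(g,\cdot)u_g$, so that $A_c(L\Gam)=\{v_g\mid g\in\Gam\}''$, and note that by Theorem~\ref{thm:LGCartan} compactness of the action is exactly what makes both $L\Gam$ and $A_c(L\Gam)$ Cartan (the latter being the $A_c$-image of the former).

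For the (easy) ``if'' direction, suppose $c$ is cohomologous to the character-cocycle $\gam\in\hat{\Gam}$, witnessed by some measurable $f\colon Y\rarrow S^1$. Viewing $f$ as a unitary $u_f\in L^\infty(Y)\sub M$, a one-line computation using $u_g f u_g^*=\sigma_g(f)$ shows that conjugation by $u_f$ multiplies the coefficient of each $v_g$ by a coboundary of $f$, turning the cocycle $c$ into the cohomologous character-cocycle $\gam$, so that $u_f v_g u_f^*=\gam(g)u_g$. Since the $\gam(g)$ are scalars, $\{\gam(g)u_g\}''=\{u_g\}''=L\Gam$, so $u_f A_c(L\Gam)u_f^*=L\Gam$ and $A_c(L\Gam)\sim_u L\Gam$.

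The main work is the ``only if'' direction, and the key tool is a description of $\cN_M(L\Gam)$ supplied by compactness. Because $\Gam\act Y$ has pure point spectrum, $L^2(Y)$ is spanned by eigenfunctions, and a unitary $f\in L^\infty(Y)$ normalizes $L\Gam$ precisely when it is an eigenfunction, $f(gy)=\chi_f(g)f(y)$ for some $\chi_f\in\hat{\Gam}$, in which case $f u_g f^*=\chi_f(g)u_g$; this is the mechanism underlying Theorem~\ref{thm:LGCartan}. Hence every element of $\cN_M(L\Gam)$ is, modulo $\cU(L\Gam)$, of the form $f u_k$ with $f$ an eigenfunction and $k\in\Gam$.

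Now suppose $w\in\cU(M)$ satisfies $wA_c(L\Gam)w^*=L\Gam$ and set $\al:=\Ad(w)\circ A_c\in\Aut(M)$, so $\al(L\Gam)=L\Gam$. Then each $\al(u_g)=w v_g w^*$ normalizes $L\Gam$, so by the previous step $\al(u_g)=d_g f_g u_{\pi(g)}$ with $d_g\in\cU(L\Gam)$, $f_g$ an eigenfunction, and $\pi(g)\in\Gam$. The plan is to read off a single coboundary and character from this data: first argue that $g\mapsto\pi(g)$ can be normalized to be trivial (using that $\al$ preserves $L\Gam$ and restricts there to a transformation of the spectrum $\hat{\Gam}$), and then extract, from the coefficients of $w$ and of the $f_g$, a measurable $f\colon Y\rarrow S^1$ and a character $\gam$ with $c(g,y)=f(gy)\gam(g)f(y)\inv$ almost everywhere; this pointwise extraction proceeds exactly as in the intertwining computation in the proof of Lemma~\ref{lem:Cartaniff} ($1\Rightarrow2$), by expanding $w=\sum_h b_h u_h$, substituting into the normalizing relation, and isolating a nonzero Fourier component to produce a partial isometry implementing the required identity. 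The cocycle identity for $c$ and ergodicity of the action are what guarantee that the locally defined eigenfunction characters $\chi_{f_g}$ combine consistently; the main obstacle is precisely this coherence step — controlling the $\Gam$-translation part $\pi$ and showing that the family $\{f_g\}$ assembles into one global coboundary $f$ rather than varying with $g$ — which is where the bulk of the argument lies.
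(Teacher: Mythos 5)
First, a point of comparison: the paper does not prove this statement at all --- it is quoted from \cite[Theorem~3.8]{Pa85} and used as a black box in the proof of Theorem~\ref{thm:R} --- so your proposal has to be judged on its own correctness rather than against an in-paper argument. Your ``if'' direction is correct and is the standard computation: the unitary $u_f\in L^\infty(Y)$ witnessing the cohomology conjugates each $v_g=c(g,\cdot)u_g$ to $\gam(g)u_g$, and since the $\gam(g)$ are scalars this carries $A_c(L\Gam)$ onto $L\Gam$.

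The ``only if'' direction, however, rests on a structural claim that is false, namely that every element of $\cN_M(L\Gam)$ is, modulo $\cU(L\Gam)$, of the form $fu_k$ with $f$ an eigenfunction and $k\in\Gam$. What you actually prove is that a unitary lying \emph{in} $L^\infty(Y)$ normalizes $L\Gam$ exactly when it is an eigenfunction; passing from this to a description of the whole normalizer is a large and unjustified leap. Indeed, $L\Gam$ is a Cartan subalgebra of the \II factor $M=L^\infty(Y)\cross\Gam$, so by Feldman--Moore theory \cite{FM77} the pair $(M,L\Gam)$ arises from an ergodic pmp countable equivalence relation on the spectrum $\hat{\Gam}$ of $L\Gam$, and \emph{every} element of the full group of that relation is induced by some unitary in $\cN_M(L\Gam)$; this full group is uncountable, since by ergodicity it contains piecewise-defined orbit-preserving maps exchanging any two subsets of equal measure. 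By contrast, the unitaries of your claimed form induce on $\hat{\Gam}$ only translations by the eigenvalue characters $\chi_f$ --- a countable group of transformations, as eigenfunctions with distinct eigenvalue characters are orthogonal in $L^2(Y)$ --- because $d_g\in\cU(L\Gam)$ and $u_{\pi(g)}\in L\Gam$ induce the identity on the abelian algebra $L\Gam$. (In the concrete situation of Theorem~\ref{thm:R}, where $\Gam=\oplus_\N\Z_2$ sits densely in the compact group $K=\prod_\N\Z_2=Y$, the pair $(M,L\Gam)$ is isomorphic to $(L^\infty(\hat{\Gam})\cross\hat{K},L^\infty(\hat{\Gam}))$ with $\hat{K}$ acting by translation, and your claim would say that the normalizer of this Cartan subalgebra sees only the global translations of $\hat{\Gam}$ by $\hat{K}$, missing all piecewise translations.) Consequently the decomposition $\al(u_g)=d_gf_gu_{\pi(g)}$, on which your entire extraction of the coboundary $f$ and the character $\gam$ is built, need not exist. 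On top of this, even granting that decomposition, your final paragraph openly defers what you yourself call ``the bulk of the argument'' (controlling $\pi$ and assembling the family $\{f_g\}$ into a single coboundary), so what is written is a strategy rather than a proof --- and the strategy starts from a false premise about the normalizer, which is precisely the difficulty any correct proof of Packer's theorem has to confront.
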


We will combine this theorem with the following results from \cite{Kec10}. Recall that for a Polish group $G$ and a standard measure space $(Y,\nu)$, we let $L(Y,\nu,G)$ be the space of all Borel maps $f:Y\rarrow G$ up to agreeing $\nu$-almost everywhere.

\begin{theorem}[{\cite[Corollary~27.4]{Kec10}}]\label{thm:turbcohom}
	Suppose $E$ is an ergodic equivalence relation that is not strongly ergodic and let $G\neq \{1\}$ be a Polish group admitting an invariant metric. Then the action of $L(Y,\nu,G)$ on $\overline{B^1(E,G)}$ is turbulent. In particular, the cohomology relation on $\overline{B^1(E,G)}$ (and thus also on $Z^1(E,G)$) does not admit classification by countable structures.
\end{theorem}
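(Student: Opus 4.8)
The plan is to verify directly the three defining conditions of turbulence for the action of $H:=L(Y,\nu,G)$ on $\overline{B^1(E,G)}$, and then invoke Theorem~\ref{thm:Hjo00}; write $\overline{B^1}$ for $\overline{B^1(E,G)}$ and $B^1$ for $B^1(E,G)$. First I would fix a bi-invariant compatible metric $d_G$ on $G$ together with a countable family $\{\gam_n\}$ generating $E$, yielding the compatible metric
\[
D(c,c') := \sum_n 2^{-n}\int_Y d_G\big(c(x,\gam_n x),c'(x,\gam_n x)\big)\,d\nu(x)
\]
on $Z^1(E,G)$. The key point of the invariant-metric hypothesis is that $H$ then acts not merely continuously but \emph{isometrically}: bi-invariance of $d_G$ gives $D(f\cdot c, f\cdot c')=D(c,c')$. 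I also record $\partial(fg)=f\cdot\partial g$, where $\partial f:=f\cdot 1$; thus $B^1$ is exactly the $H$-orbit of the trivial cocycle, and $\overline{B^1}$ is a closed $H$-invariant (hence Polish) subspace on which we verify turbulence.

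\textbf{Density and meagerness of orbits.} For density, let $c=\lim_n\partial g_n\in\overline{B^1}$. Since $g_n^{-1}\cdot\partial g_n=\partial(g_n^{-1}g_n)=1$ and $D$ is $H$-invariant, $D(g_n^{-1}\cdot c,1)=D(c,\partial g_n)\rarrow 0$, so $1\in\overline{H\cdot c}$ and therefore $\overline{H\cdot c}\super\overline{H\cdot 1}=\overline{B^1}$; every orbit is dense. For meagerness, $B^1=\partial(H)$ is analytic (a continuous image of the Polish group $H$), dense, and has the Baire property. By the standard Effros/Becker--Kechris dichotomy for continuous Polish group actions, a non-meager orbit is $G_\delta$, equivalently locally closed in its closure, and being dense it would then be relatively open. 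This is where the ergodic input enters a first time: non-strong ergodicity of $E$ supplies, at every scale, nontrivial almost invariant sets (Section~\ref{sec:aiseq}), out of which one builds elements of $\overline{B^1}\setminus B^1$ accumulating at $1$; hence $1$ is not an interior point of $B^1$, so $B^1$ is not locally closed and is therefore meager. By homogeneity of the isometric, orbit-transitive action on $B^1$, every orbit is meager.

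\textbf{The local orbit condition (the main obstacle).} Fix $c\in\overline{B^1}$, an open $U\ni c$, and a neighborhood $V\ni 1$ in $H$; by $H$-invariance of $D$ it suffices to produce a neighborhood $W\ni 1$ in $\overline{B^1}$ with $c\cdot W\sub\overline{\cO(c,U,V)}$. The model is Example~\ref{eg:l1}, where a target is reached by many $V$-small increments whose intermediate points stay inside $U$; the device replacing the scaling $g\mapsto g/k$ used there is \emph{support interpolation}. For $g$ supported on a set $S$ of small measure, choose an increasing family $\emptyset=S_0\subseteq S_t\subseteq S_1=S$ and set $g_t:=g\cdot\een_{S_t}$. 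Each increment $g_{t'}g_t\inv$ is supported on a set of small measure and so lies in $V$, while $c\cdot\partial g_t$ moves continuously from $c$ to $c\cdot\partial g$. The difficulty — which I expect to be the crux — is that $\partial g_t$ can be large on the \emph{boundary} edges $(x,\gam_n x)$ with exactly one endpoint in $S_t$, i.e.\ on $S_t\Del\gam_n\inv S_t$, so that the intermediate points might leave $U$. This is exactly where non-strong ergodicity is used a second time: selecting each $S_t$ from an almost invariant family forces $\sum_n 2^{-n}\nu(S_t\Del\gam_n\inv S_t)$ to be uniformly small, whence $\norm{\partial g_t}$ stays small throughout and the whole discretized path remains in $U$. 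Running this over a dense supply of small, almost-invariantly supported $g$ yields a neighborhood $W$ as required, so $\overline{\cO(c,U,V)}$ has nonempty interior.

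\textbf{Conclusion.} Turbulence of $H\curvearrowright\overline{B^1}$ together with Theorem~\ref{thm:Hjo00} shows that its orbit equivalence relation, namely the cohomology relation on $\overline{B^1(E,G)}$, is not classifiable by countable structures. Finally, since $\overline{B^1(E,G)}$ is a Borel $H$-invariant subset of $Z^1(E,G)$ and the cohomology relation on it is the restriction of the cohomology relation on $Z^1(E,G)$, the inclusion $\overline{B^1(E,G)}\hookrightarrow Z^1(E,G)$ is a Borel reduction; hence the cohomology relation on $Z^1(E,G)$ is not classifiable by countable structures either.
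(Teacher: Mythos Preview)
The paper does not prove this statement: Theorem~\ref{thm:turbcohom} is quoted verbatim from \cite[Corollary~27.4]{Kec10} and used as a black box in Section~\ref{sec:R}. There is therefore no ``paper's own proof'' to compare your proposal against.

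That said, your outline follows the expected route (essentially the one in \cite{Kec10}): exploit bi-invariance of $d_G$ to make the $L(Y,\nu,G)$-action isometric, verify density of orbits via $B^1=H\cdot 1$, and use almost invariant sets from non-strong ergodicity both to produce points of $\overline{B^1}\setminus B^1$ and to interpolate along small-support paths for the local-orbit condition. One step in your meagerness argument is not quite right as written: after showing that $B^1=H\cdot 1$ is meager, you appeal to ``homogeneity of the isometric, orbit-transitive action on $B^1$'' to conclude that \emph{every} orbit in $\overline{B^1}$ is meager. But $B^1$ is a single orbit, and homogeneity within it says nothing about the other orbits in $\overline{B^1}$. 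The clean fix is exactly Proposition~\ref{prop:orbits} (i.e.\ \cite[Proposition~23.5]{Kec10}): for an isometric action of a Polish group, if $H\cdot 1$ is not closed then every orbit in $\overline{H\cdot 1}$ is meager. You have already argued that $H\cdot 1$ is not closed (via the existence of points in $\overline{B^1}\setminus B^1$), so invoking this proposition closes the gap.
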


\begin{proposition}[{\cite[Corollary~22.2 and Proposition~23.5]{Kec10}}]\label{prop:orbits}
	Let $G$ be a Polish group admitting an invariant metric and let $G$ act continuously by isometries on a Polish metric space $(M,\rho)$. Assume that the orbit $G\cdot x$ is not closed. Then the action of $G$ on the invariant closed set $\overline{G\cdot x}$ is minimal and every orbit contained in $\overline{G\cdot x}$ is meager in $\overline{G\cdot x}$.
\end{proposition}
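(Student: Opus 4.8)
The plan is to treat the two assertions separately, deriving minimality directly from the isometric character of the action and then obtaining meagerness of the orbits by combining minimality with the non-closedness hypothesis and an Effros-type dichotomy. Throughout write $Y:=\overline{G\cdot x}$, a Polish space on which $G$ acts continuously and isometrically.

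For minimality I would first observe that the relation ``$y\in\overline{G\cdot z}$'' is symmetric for isometric actions: if $g_n\cdot z\to y$, then applying the isometry $g_n^{-1}$ gives $\rho(g_n^{-1}\cdot y,z)=\rho(y,g_n\cdot z)\to 0$, so $z\in\overline{G\cdot y}$. Next, if $y\in\overline{G\cdot x}$, say $y=\lim_m g_m\cdot x$, then for each fixed $h\in G$ continuity of the map $z\mapsto h\cdot z$ yields $h\cdot y=\lim_m hg_m\cdot x\in\overline{G\cdot x}$; hence $\overline{G\cdot y}\subseteq\overline{G\cdot x}$. The reverse inclusion follows by applying the same argument to $x\in\overline{G\cdot y}$, which we have by symmetry. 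Thus $\overline{G\cdot y}=\overline{G\cdot x}=Y$ for every $y\in Y$, i.e. every orbit in $Y$ is dense, which is precisely minimality. Note this step uses only continuity and the isometric property, not the invariant metric on $G$.

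For the meagerness statement, fix $y\in Y$. Since $G\cdot x$ is not closed while $\overline{G\cdot x}=Y$, we have $G\cdot x\neq Y$, so $Y$ contains at least two distinct orbits. Each orbit $G\cdot y$ is a continuous image of the Polish group $G$, hence analytic and in particular has the Baire property; and it is $G$-invariant. Because $G\curvearrowright Y$ is topologically transitive (it even has dense orbits), the topological zero–one law shows that any invariant set with the Baire property is meager or comeager, so each orbit is meager or comeager in $Y$. As two disjoint comeager sets cannot coexist, at most one orbit is comeager, and it remains only to rule out the existence of a comeager — equivalently, a non-meager — orbit.

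The decisive input, and the part I expect to be the main obstacle, is the Effros-type dichotomy: for a continuous Polish group action, an orbit that is non-meager in its closure is locally closed, hence open in its closure. Granting this, suppose some $G\cdot y$ were non-meager in $Y=\overline{G\cdot y}$. Then $G\cdot y$ would be open in $Y$, and being dense by minimality, its complement — a closed $G$-invariant set — would be empty, so $G\cdot y=Y$. But then $x\in Y=G\cdot y$ gives $G\cdot x=G\cdot y=Y$, contradicting that $G\cdot x$ is not closed. Hence no orbit in $Y$ is non-meager, i.e. every orbit is meager, as desired. The real work is thus concentrated in the Effros dichotomy, whose proof runs through an iterated Baire-category argument establishing that the orbit map $g\mapsto g\cdot y$ is open onto its image; this is where the hypotheses on $G$ enter. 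Using the invariance of the metric on $G$ and continuity of the action at $(e,y)$, each piece $B(g_n,\eps)\cdot y=g_n\cdot(B(e,\eps)\cdot y)$ is an isometric copy of a set of arbitrarily small diameter, so a countable cover $G=\bigcup_n B(g_n,\eps)$ expresses the orbit as a countable union of uniformly small isometric pieces; this uniform-diameter bookkeeping is what makes the category estimates in the open-mapping argument go through.
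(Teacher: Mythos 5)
The paper itself contains no proof of this proposition: it is quoted verbatim from Kechris's book \cite{Kec10} (Corollary~22.2 and Proposition~23.5 there), so there is no internal argument to compare yours against, and your proposal has to be judged on its own merits. On those merits it is essentially correct. The minimality half is complete: the identity $\rho(g_n^{-1}\cdot y,z)=\rho(y,g_n\cdot z)$ is exactly where the isometric hypothesis enters, it makes the orbit-closure relation symmetric, and your two inclusions then give $\overline{G\cdot y}=\overline{G\cdot x}$ for every $y\in\overline{G\cdot x}$. The meagerness half is also a valid deduction: by Effros's theorem, a non-meager orbit in $Y=\overline{G\cdot x}$ would be open in $Y$, hence --- all orbits being dense and pairwise disjoint --- equal to $Y$, forcing $G\cdot x=Y$ to be closed, a contradiction; your detour through analyticity, the Baire property and the topological zero-one law is redundant (the Effros argument alone kills every non-meager orbit) but harmless. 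The one point you should correct is the closing commentary on where the hypotheses enter: Effros's dichotomy holds for \emph{every} continuous action of a Polish group on a Polish space, with no invariant metric and no isometry assumed, so quoting it is legitimate, but the special hypotheses of this proposition are not what make it true. Those hypotheses are what give minimality (your first half), and what would permit a more elementary, self-contained proof of the dichotomy along the lines you sketch (cover $G$ by small balls in the invariant metric, translate by isometries, iterate); if you wanted to avoid invoking Effros as a black box, that sketch would still have to be carried out in full, including the convergence/completeness step of the iteration, which your ``uniform-diameter bookkeeping'' remark elides.
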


We can now give a short proof of Theorem~\ref{thm:R}, establishing that the Cartan subalgebras of the hyperfinite \II factor $R$ up to unitary conjugacy are not classifiable by countable structures.

\begin{proof}[Proof of Theorem~\ref{thm:R}]
	Following \cite[section~4]{Pa85} we consider $\Z_2$ with the $(1/2,1/2)$-measure. Let $(Y,\nu)$ be $\Pi_{i\in\N} \Z_2$ with product measure and put $\Gam:=\oplus_{i\in\N} \Z_2$. Then both $Y$ and $\Gam$ are topological groups for addition modulo 2 and $\Gam$ embeds as a countable dense subgroup in $Y$. It is easy to check that the translation action $\Gam\curvearrowright Y$ is free, ergodic, measure preserving and compact. Also we note that $L^\infty(Y)\cross \Gam$ can be identified with $\otb_\N (L^{\infty}(\Z_2)\rtimes \Z_2) \cong\otb_\N M_2(\C)\cong R$ and that $\hat{\Gam}\cong Y$. From Theorem~\ref{thm:LGCartan} it now follows that $L\Gam$ is a Cartan subalgebra of $L^\infty(Y)\cross \Gam\cong R$, and so we can consider all Cartan subalgebras of the form $A_c(L\Gam)$ for $c\in Z^1(\Gam\act Y,S^1)$. Note that Theorem~\ref{thm:AalconjLG} implies that $A_c(L\Gam)$ is unitarily conjugate to $A_d(L\Gam)$ if and only if $c^{-1}d$ is cohomologous to a cocycle $\gam\in\hat{\Gam}$. Consider now the action of $\hat{\Gam}\times L(Y,\nu,S^1)$ on $\overline{B^1(\Gam\act Y,S^1)}=Z^1(\Gam\act Y,S^1)$ (see \cite[Theorem~26.4]{Kec10} for the equality) given by
	\[
	(\gam,f)\cdot c(g,y) = \gam(g) f(gy) c(g,y) f(y)^{-1}.
	\]
	From the above it follows that the orbit equivalence relation of this action is Borel reducible to the equivalence relation of unitary conjugacy on $\Cartan(R)$. Hence it suffices to show that the above action is turbulent. However, we know that the action of just the $L(Y,\nu,S^1)$-part is turbulent by Theorem~\ref{thm:turbcohom}. So if we can show that the orbits for the action of $\hat{\Gam}\times L(Y,\nu,S^1)$ are still meager, the result will follow (since the other parts in the definition of turbulence are obviously satisfied). For this, note that the action is not transitive. Since the orbit of 1 is dense, it then immediately follows from Proposition~\ref{prop:orbits} (and the remark below) that all orbits are meager, finishing the proof of the theorem.
\end{proof}

\begin{remark}
	Concerning the conditions in the theorems we apply, note that
	\begin{enumerate}
		\item the orbit equivalence relation of $\Gam\act Y$ is hyperfinite and hence not strongly ergodic,
		\item the Polish groups involved (namely $\hat{\Gam}$, $S^1$ and $L(Y,\nu,S^1)$) admit invariant metrics,
		\item $\hat{\Gam}\times L(Y,\nu,S^1)\curvearrowright Z^1(\Gam\act Y,S^1)$ is an action by isometries. Indeed, on $Z^1(\Gam\act Y,S^1)$ we have the compatible metric
		\[
		d(c,c') = \sum_{k=1}^\infty 2^{-k} \int_Y \abs{c(g_k,y)-c'(g_k,y)}\,d\nu(y)
		\]
		where $\Gam:=\{g_1,g_2,g_3,\dots\}$ (see also \cite[ch.~24]{Kec10}). It is then a straightforward calculation to check that $d((\gam,f)\cdot c,(\gam,f)\cdot c')=d(c,c')$.
	\end{enumerate}
\end{remark}

We end with the following proposition, which together with Theorem~\ref{thm:R} easily implies Corollary~\ref{cor:McDuff}.

\begin{proposition}\label{prop:MotbN}
	Suppose $M$ is a \II factor with at least one Cartan subalgebra and $N$ is any \II factor. Then $\cR(\cU(N)\act\Cartan(N))\leq_B \cR(\cU(M\otb N)\act\Cartan(M\otb N))$.
\end{proposition}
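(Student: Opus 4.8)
The plan is to use the reduction map $f:\Cartan(N)\rarrow\Cartan(M\otb N)$, $f(B):=A\otb B$, where $A\sub M$ is a fixed Cartan subalgebra (which exists by hypothesis). First I would check that $f$ is well defined, i.e. that $A\otb B$ is a Cartan subalgebra of $M\otb N$ whenever $B$ is Cartan in $N$. Maximal abelianness follows from $(A\otb B)'\cap(M\otb N)=(A'\cap M)\otb(B'\cap N)=A\otb B$, using that $A,B$ are MASAs together with the commutation theorem for tensor products of finite von Neumann algebras, and regularity follows since $\cN_{M\otb N}(A\otb B)\super\{u\otb v\mid u\in\cN_M(A),\,v\in\cN_N(B)\}$ already generates $\cN_M(A)''\otb\cN_N(B)''=M\otb N$. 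That $f$ is Borel I would verify directly from the Effros/Speelman--Vaes Borel structure of Proposition~\ref{prop:SV-Cartan}: since $E_{A\otb B}=E_A\otb E_B$, the generating functions $B\mapsto\tau(E_{A\otb B}(x)y)$ reduce, on elementary tensors $x=x_1\otb x_2$, $y=y_1\otb y_2$, to $\tau_M(E_A(x_1)y_1)\,\tau_N(E_B(x_2)y_2)$, which is Borel in $B$ by definition; the general case follows by linearity and a density argument.

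The forward implication of the reduction is immediate: if $vB_1v^*=B_2$ for some $v\in\cU(N)$, then $(1\otb v)(A\otb B_1)(1\otb v)^*=A\otb B_2$. The content of the proof, and what I expect to be the main obstacle, is the converse: $A\otb B_1\sim_u A\otb B_2$ in $M\otb N$ should force $B_1\sim_u B_2$ in $N$. Here the plan is to argue through Popa's intertwining (Theorem~\ref{thm:Popa}) applied to the full unitary group. Suppose $B_1\not\emb_N B_2$; then there is a sequence $(v_n)_n\in\cU(B_1)$ with $\norm{E_{B_2}(x^*v_ny)}_2\rarrow 0$ for all $x,y\in N$. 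The key computation is that for elementary tensors $X=m_1\otb n_1$, $Y=m_2\otb n_2$, using $E_{A\otb B_2}=E_A\otb E_{B_2}$,
\[
\norm{E_{A\otb B_2}(X^*(1\otb v_n)Y)}_2=\norm{E_A(m_1^*m_2)}_2\,\norm{E_{B_2}(n_1^*v_nn_2)}_2\rarrow 0,
\]
and by a routine linearity-plus-Kaplansky-density argument this persists for all $X,Y\in M\otb N$. Since $(1\otb v_n)_n$ lies in $\cU(A\otb B_1)$, which generates $A\otb B_1$, Theorem~\ref{thm:Popa} yields $A\otb B_1\not\emb_{M\otb N}A\otb B_2$; in particular $A\otb B_1$ and $A\otb B_2$ are not unitarily conjugate. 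Contrapositively, $A\otb B_1\sim_u A\otb B_2$ implies $B_1\emb_N B_2$, and since $B_1,B_2$ are Cartan this gives $B_1\sim_u B_2$ by \cite[Theorem~A.1]{Po01}.

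Putting these together, $f$ is a Borel reduction of $\cR(\cU(N)\act\Cartan(N))$ to $\cR(\cU(M\otb N)\act\Cartan(M\otb N))$, which proves the proposition. The one genuinely delicate point is the converse direction; its resolution hinges on the observation that one may test the intertwining $A\otb B_1\emb_{M\otb N} A\otb B_2$ against the distinguished sequence $1\otb v_n$ supported in the second tensor leg, which isolates the behaviour of $B_1$ and $B_2$ inside $N$ and lets the factorization $E_{A\otb B_2}=E_A\otb E_{B_2}$ carry out the estimate.
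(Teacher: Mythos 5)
Your proposal is correct and follows essentially the same route as the paper: the same map $B\mapsto A\otb B$, the trivial forward direction, and the converse established by testing Popa's intertwining criterion (Theorem~\ref{thm:Popa}) against the unitaries $1\otb v_n$, using the factorization $E_{A\otb B_2}=E_A\otb E_{B_2}$ on elementary tensors plus a density argument, and finally upgrading $B_1\emb_N B_2$ to unitary conjugacy via \cite[Theorem~A.1]{Po01}. The only additions are your explicit verifications that $A\otb B$ is Cartan and that $f$ is Borel, which the paper treats as standard.
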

\begin{proof}
	Let $A\sub M$ be a Cartan subalgebra and consider the Borel map
	\[
	f: \Cartan(N)\rarrow \Cartan(M\otb N): B\mapsto A\otb B.
	\]
	We claim that this is a Borel reduction for the equivalence relations induced by unitary conjugacy, i.e. $B$ and $C$ are unitarily conjugate inside $N$ if and only if $A\otb B$ and $A\otb C$ are unitarily conjugate inside $M\otb N$. One direction is trivial. For the other direction, suppose that $A\otb B$ is unitarily conjugate to $A\otb C$ inside $M\otb N$.
	\begin{claim}
		If $A\sub M$ and $B,C\sub N$ are any von Neumann algebras such that $A\otb B$ is unitarily conjugate to $A\otb C$ inside $M\otb N$, then $B\emb_N C$.
	\end{claim}
	\begin{proofofclaim}
		Suppose not, then by Theorem~\ref{thm:Popa} there exist $u_n\in\cU(B)$ such that $\norm{E_C(x^*u_ny)}_2\rarrow 0$ for all $x,y\in N$. Take now $s_1,s_2\in M$ and $t_1,t_2\in N$. Then
		\[
		\norm{E_{A\otb C}((s_1\ot t_1)^*(1\ot u_n)(s_2\ot t_2))}_2 = \norm{E_A(s_1^*s_2)\ot E_C(t_1^*u_nt_2)}_2 \rarrow 0.
		\]
		Since elements of the form $s\ot t$ are dense in $M\otb N$, it follows by normality of the conditional expectation that the same holds for any $x,y\in M\otb N$. Hence $A\otb B\not\emb_{M\otb N} A\otb C$, contradiction.
	\end{proofofclaim}
	
	\vspace{3pt}
	
	Since $N$ is a \II factor and $B,C$ are Cartan subalgebras, the claim implies that $B$ and $C$ are unitarily conjugate inside $N$, finishing the proof.
\end{proof}

\begin{proof}[Proof of Corollary~\ref{cor:McDuff}]
	Since a McDuff \II factor $M$ satisfies $M\cong M\otb R$ by definition, this follows immediately from Proposition~\ref{prop:MotbN} and Theorem~\ref{thm:R}.
\end{proof}

\printbibliography[heading=bibintoc]

\Addresses

\end{document}